\documentclass[10pt,letterpaper]{article}

\usepackage[top=2.54cm,left=2.54cm,right=2.54cm,bottom=2.54cm]{geometry}

\usepackage{amsmath,amssymb,amsthm}

\usepackage{setspace}
\usepackage{authblk}

\theoremstyle{plain}
\newtheorem{theorem}{Theorem}[section]

\newtheorem{proposition}[theorem]{Proposition}

\theoremstyle{definition}
\newtheorem{definition}[theorem]{Definition}
\newtheorem{example}[theorem]{Example}
\newtheorem{notation}[theorem]{Notation}

\newtheorem{remark}{Remark}[section]

\usepackage{graphicx}
\usepackage{tikz}

\usepackage[]{hyperref}
\hypersetup{
linkcolor  = blue,
citecolor  = magenta,
urlcolor   = magenta,
colorlinks = true,
}

\usepackage{color}

\usepackage{caption}
\usepackage{subcaption}

\usepackage{tikz}
\usepackage{tkz-graph}
\usetikzlibrary{shapes.geometric}
\usetikzlibrary{shapes.arrows}
\usepackage{array}

\usepackage{comment}
\usepackage{bm}

\usepackage{mathtools}

\usepackage{tikz-cd}

\usepackage{epigraph}

\usepackage[]{algorithm2e}

\usepackage{csquotes}
\usepackage{epigraph}

\usepackage{etoolbox}
\makeatletter
\newlength\epitextskip
\pretocmd{\@epitext}{\em}{}{}
\apptocmd{\@epitext}{\em}{}{}
\patchcmd{\epigraph}{\@epitext{#1}\\}{\@epitext{#1}\\[\epitextskip]}{}{}
\makeatother

\usepackage{float}
\usepackage{ragged2e}

\makeatletter
\def\bbordermatrix#1{\begingroup \m@th
\@tempdima 4.75\p@
\setbox\z@\vbox{%
\def\cr{\crcr\noalign{\kern2\p@\global\let\cr\endline}}%
\ialign{$##$\hfil\kern2\p@\kern\@tempdima&\thinspace\hfil$##$\hfil
&&\quad\hfil$##$\hfil\crcr
\omit\strut\hfil\crcr\noalign{\kern-\baselineskip}%
#1\crcr\omit\strut\cr}}%
\setbox\tw@\vbox{\unvcopy\z@\global\setbox\@ne\lastbox}%
\setbox\tw@\hbox{\unhbox\@ne\unskip\global\setbox\@ne\lastbox}%
\setbox\tw@\hbox{$\kern\wd\@ne\kern-\@tempdima\left[\kern-\wd\@ne
\global\setbox\@ne\vbox{\box\@ne\kern2\p@}%
\vcenter{\kern-\ht\@ne\unvbox\z@\kern-\baselineskip}\,\right]$}%
\null\;\vbox{\kern\ht\@ne\box\tw@}\endgroup}
\makeatother

\usepackage[thinlines]{easytable}
\usepackage{lscape}
\usepackage{booktabs}
\usepackage{xcolor,colortbl}
\definecolor{lavender}{rgb}{0.9, 0.9, 0.98}

\usepackage{comment}
\usepackage{bm}

\NewDocumentCommand{\dgal}{sO{}m}{%
\IfBooleanTF{#1}
{\dgalext{#3}}
{\dgalx[#2]{#3}}%
}

\NewDocumentCommand{\dgalext}{m}{%
\sbox0{%
\mathsurround=0pt % just for safety
$\left\{\vphantom{#1}\right.\kern-\nulldelimiterspace$%
}%
\sbox2{\{}%
\ifdim\ht0=\ht2
\{\kern-.625\wd2 \{#1\}\kern-.625\wd2 \}%
\else
\left\{\kern-.7\wd0\left\{#1\right\}\kern-.7\wd0\right\}%
\fi
}

\NewDocumentCommand{\dgalx}{om}{%
\sbox0{\mathsurround=0pt$#1\{$}%
\sbox2{\{}%
\ifdim\ht0=\ht2
\{\kern-.625\wd2 \{#2\}\kern-.625\wd2 \}%
\else
\mathopen{#1\{\kern-.7\wd0 #1\{}
#2
\mathclose{#1\}\kern-.7\wd0 #1\}}
\fi
}

\newcommand\phantomarrow[2]{%
\setbox0=\hbox{$\displaystyle #1\to$}%
\hbox to \wd0{%
$#2\mapstochar
\cleaders\hbox{$\mkern-1mu\relbar\mkern-3mu$}\hfill
\mkern-7mu\rightarrow$}%
\,}

\usepackage{pgfgantt} %para gráfico de gantt usado no cronograma
\definecolor{tartorange}{rgb}{0.96, 0.28, 0.25}
\definecolor{lightkaki}{rgb}{0.96,0.94,0.53}

\usepackage{caption}
\usepackage{tabularx}% <-- added
\usepackage[export]{adjustbox}% <-- added

\usepackage{geometry}
\usepackage{graphicx}

\usepackage[matrix, arrow]{xy}
\usepackage{array}
\newcolumntype{M}[1]{>{\centering\arraybackslash}m{#1}}
\newcolumntype{N}{@{}m{0pt}@{}}

\usepackage[thinlines]{easytable}
\usepackage{lscape}
\usepackage{booktabs}
\usepackage{xcolor,colortbl}
\definecolor{lavender}{rgb}{0.9, 0.9, 0.98}
\usepackage{booktabs}
\usepackage{multirow}
\usepackage{threeparttable}
\usepackage{longtable}
\usepackage{rotating}
\usepackage{tabularx}
\usepackage{enumitem}
\usepackage{titlesec}

\newcolumntype{P}[1]{>{\centering\arraybackslash}p{#1}}

\usepackage{xcolor}
\usepackage{caption}
\usepackage{subcaption}
\usepackage{bm}

\usepackage{tikz}
\usepackage{tkz-graph}
\usetikzlibrary{shapes.geometric}
\usetikzlibrary{shapes.arrows}
\usepackage{array}

\makeatletter
\def\bbordermatrix#1{\begingroup \m@th
\@tempdima 4.75\p@
\setbox\z@\vbox{%
\def\cr{\crcr\noalign{\kern2\p@\global\let\cr\endline}}%
\ialign{$##$\hfil\kern2\p@\kern\@tempdima&\thinspace\hfil$##$\hfil
	&&\quad\hfil$##$\hfil\crcr
	\omit\strut\hfil\crcr\noalign{\kern-\baselineskip}%
	#1\crcr\omit\strut\cr}}%
\setbox\tw@\vbox{\unvcopy\z@\global\setbox\@ne\lastbox}%
\setbox\tw@\hbox{\unhbox\@ne\unskip\global\setbox\@ne\lastbox}%
\setbox\tw@\hbox{$\kern\wd\@ne\kern-\@tempdima\left[\kern-\wd\@ne
\global\setbox\@ne\vbox{\box\@ne\kern2\p@}%
\vcenter{\kern-\ht\@ne\unvbox\z@\kern-\baselineskip}\,\right]$}%
\null\;\vbox{\kern\ht\@ne\box\tw@}\endgroup}
\makeatother
\usepackage{enumitem}% http://ctan.org/pkg/enumitem

\usepackage{bm}

\definecolor{lightgray}{gray}{0.95}

\newcommand\musd[2]{\begin{tabular}[c]{@{}c@{}}\small$#1$\\\scriptsize$(#2)$\end{tabular}}

\usepackage{titlesec}
\titleformat{\paragraph}
{\normalfont\normalsize\bfseries}{\theparagraph}{1em}{}
\titlespacing*{\paragraph}
{0pt}{3.25ex plus 1ex minus .2ex}{1.5ex plus .2ex}

\title{Directed Q-Analysis and Directed Higher-Order Connectivity on Digraphs: A Quantitative Approach}

\author[1, 2]{Heitor Baldo\footnote{Email address: \texttt{heitor.baldo@igdore.org}}}
\author[3]{Luiz A. Baccalá}
\author[1,4]{André Fujita}
\author[5]{Koichi Sameshima}
\affil[1]{Department of Computer Science, Institute of Mathematics, Statistics, and Computer Science, University of São Paulo, \emph{Brazil}}
\affil[2]{Institute for Globally Distributed Open Research and Education (IGDORE), \emph{Sweden}}
\affil[3]{Department of Telecommunications and Control Engineering, Escola Politécnica, University of São Paulo, \emph{Brazil}}
\affil[4]{Division of Network AI Statistics, Medical Institute of Bioregulation, Kyushu University, \emph{Japan}}
\affil[5]{Department of Radiology and Oncology, School of Medicine, University of São Paulo, \emph{Brazil}}

\date{}

\begin{document}

\maketitle

\begin{abstract}
Traditional graph analysis focuses on nodes and edges, that is, pairwise relationships. Yet many real-world networks, including biological, social, and communication networks, involve higher-order relationships in which multiple nodes interact simultaneously. This has led many to develop network topology analysis methods based on higher-order structures and higher-order connectivity, seeking to reveal complex interactions beyond node pairs. Many of the latter address only undirected networks. To overcome this, we lay out a mathematical formalism resting on directed clique complexes constructed from directed graphs (their ``higher-order structures'' or ``simplicial structures''), stressing the interrelations between directed cliques (their ``directed higher-order connectivities''), leading towards a more complete directed Q-analysis that allows quantifying, characterizing, and comparing similarities involving simplicial structures.
\end{abstract}

%{
%  \hypersetup{linkcolor=black}
%  \tableofcontents
%}

%%%%%%%%%%%%%%%%%%%%%%%%%%%%%%%%%%%%%%%%%%%%%%%%%%%%%%%%
\section{Introduction}

Graphs, composed of nodes and edges connecting them, are often used to represent real-world systems, where nodes represent system entities and edges represent interactions. They have been applied to fields as diverse as social science, biology, computer science, linguistics, and transportation planning \cite{Chung2006, Newman}. Graph structure is typically studied through its subgraphs, i.e., smaller graphs contained within the larger graph. Complete subgraphs (\textit{cliques}), i.e., subgraphs where each node connects to all other nodes in the subgraph, are of special interest so that graphs can be seen as hierarchically structured: ``higher-level'' (larger) subgraphs contain other ``lower-level'' (smaller) subgraphs. This is the basis of the \textit{clique organization} or \textit{clique topology} of a graph, since larger cliques contain smaller cliques. This hierarchy enables the construction of simplicial complexes associated with such cliques, known as \textit{clique complexes} \cite{Aharoni}. When dealing with \textit{directed graphs}  (\textit{digraphs}), whose edges have orientation, this same type of organization is present. We may profit from taking edge directionality into account in the form of \textit{directed clique complexes} (\textit{directed flag complexes}) \cite{Masulli, Reimann}, representing their ``higher-order structures'' or ``simplicial structures.''

Traditionally, graph analysis (Section \ref{sec:Graph-Theory}) focuses on pairwise relationships, yet higher-order graph relationships, i.e., those involving more than just node pairs, reveal novel and more complex interactions. Consequently, many have proposed methods that incorporate graph/digraph topology by looking at their higher-order relationships/connectivity (social science \cite{Atkin1977, Beaumont}, biology \cite{Rabadan}, network neuroscience \cite{Giusti2016, Giusti2015, Reimann, Singh, Sizemore}, and transportation networks \cite{Johnson}) comprising computational (algebraic) topology and computational geometry concepts such as simplicial complexes (Section \ref{sec:ADSC}), homology, homotopy, Betti numbers, and persistent homology (PH) into what is called \textit{topological data analysis} (TDA) \cite{Chazal}. For example, some applications in network neuroscience have included assessing neural functions and structures through clique complexes built from functional or structural brain networks \cite{Giusti2016, Giusti2015, Petri, Reimann, Sizemore} in the context of characterizing brain networks in patients with attention deficit hyperactivity disorder (ADHD) and autism spectrum disorders (ASD) through PH \cite{Lee1-AD, Lee2-AD}. In addition, characterization \cite{Merelli} and detection \cite{Fernandez, Piangerelli, Sun2023, Wang} of epileptic seizures through PH, and analysis of functional brain connectivity dynamics through persistence vineyards \cite{Yoo}, were studied alongside the examination of functional connectivity networks \cite{Tadic} and human connectomes \cite{Andjelkovic2020}.

Here, we lay out a mathematical formalism to study digraph topology based on their \textit{directed clique complexes} and their \textit{directed higher-order connectivity} by focusing on the theoretical development of directed flag complexes towards a more complete directed Q-analysis in analogy with classical Q-analysis (Section \ref{sec:directed-q-analysis}) whereby higher-order connectivity between directed simplices is studied via the concept of \textit{directed higher-order adjacencies} (lower and upper directed adjacencies) (Section \ref{sec:dir-q-analysis}). Furthermore, we show all these constructs can be extended to the weighted graph case, i.e. when these complexes are obtained from weighted digraphs (Section \ref{sec:weighted-dfc}), allowing us to expand several standard quantifiers and similarity comparison methods defined primarily for (weighted and unweighted) graphs, such as distance-related measures, measures of centrality, segregation, spectrum-related measures, graph kernels, etc., which take into account their abstract, algebraic, and topological properties, to (weighted and unweighted) directed flag complexes, by explicitly taking into account their directed higher-order connectivity (Section \ref{sec:quantitative}). We end by presenting a simple application example to \textit{C. elegans} frontal neuronal network, and show that it reveals properties beyond those deduced via the usual graph analysis (Section \ref{sec:bio_appli}), topped by conclusions in Section \ref{sec:conclu}.

%%%%%%%%%%%%%%%%%%%%%%%%%%%%%%%%%%%%%%%%%%%%%%%%%%%%%%%%
\section{Elements of Graph Theory}
\label{sec:Graph-Theory}

We begin by briefly introducing the graph-theoretic concepts and notation used throughout this text, following \cite{Bang, Diestel, Harary}.

A \textit{graph} is a pair $G = (V, E)$, where $V$ is the set of \textit{vertices} (or \textit{nodes}) and $E \subseteq V \times V$ is the set of edges. If $E$ consists of unordered pairs, $G$ is \textit{undirected}; if ordered, $G$ is a \textit{directed graph} (or \textit{digraph}). For an arc $(v,u)$ on a digraph, the direction is from $v$ to $u$. The \textit{order} (or \textit{size}) of $G$ is $|V|$, and if $|V|, |E| < \infty$, $G$ is \textit{finite}; if $|V| = |E| = 0$, $G$ is \textit{null}. We consider only finite, non-null (di)graphs without loops or multiple edges/arcs, though a digraph may have a \textit{double edge} ($(u,v), (v,u) \in E$). Two vertices $v, u \in V$ are \textit{adjacent to} if $(v,u) \in E$. The \textit{neighborhood} of $v$ is $\mathcal{N}(v) = \{u \in V : (v, u) \in E\}$. The \textit{degree} of $v$ is $\deg(v) = |\mathcal{N}(v)|$; if $\deg(v) = 0$, $v$ is \textit{isolated}. In digraphs, \textit{in-neighborhood} is $\mathcal{N}^{-}(v) = \{u \in V : (u, v) \in E\}$ and \textit{out-neighborhood} is $\mathcal{N}^{+}(v) = \{u \in V : (v, u) \in E\}$, with corresponding degrees $\deg^{-}(v)$ and $\deg^{+}(v)$. A vertex $v$ is a \textit{source} if $\deg^{-}(v)=0$, $\deg^{+}(v)\neq0$, and a \textit{sink} if $\deg^{-}(v)\neq0$, $\deg^{+}(v)=0$. The \textit{underlying (undirected) graph} of a digraph is the undirected graph obtained by replacing each arc with an edge. A (di)graph $G' = (V', E')$ is a \textit{sub(di)graph} of $G = (V, E)$ if $V' \subseteq V$ and $E' \subseteq E$; it is \textit{induced} if all edges/arcs between vertices in $V'$ are included. A graph is \textit{complete} if every pair of vertices is adjacent. A \textit{$(k+1)$-clique} is a complete induced subgraph of size $k+1$, and it is \textit{maximal} if not contained in a larger clique. A \textit{weighted (di)graph} is a triplet $G^{\omega} = (V, E, \omega)$ where $\omega: V \times V \rightarrow \mathbb{R}_{\ge 0}$ assign weights to edges/arcs. The \textit{weighted degree} of $v$ on a weighted graph is $\deg_{\omega}(v) = \sum_{u \in \mathcal{N}(v)} \omega(v, u)$. In a weighted digraph, the \textit{in-degree} is $\deg_{\omega}^{-}(v) = \sum_{u \in \mathcal{N}^{-}(v)} \omega(u, v)$, and the \textit{out-degree} is $\deg_{\omega}^{+}(v) = \sum_{u \in \mathcal{N}^{+}(v)} \omega(v, u)$.

%---------------

%%%%%%%%%%%%%%%%%%%%%%%%%%%%%%%%%%%%%%%%%%
\section{Abstract (Directed) Simplicial Complexes}
\label{sec:ASC}

The notions of abstract simplicial complexes and abstract directed simplicial complexes are formally introduced. The primary bibliography of this section is \cite{Hatcher, Munkres, Reimann}.

%------------
\subsection{Abstract Simplicial Complexes}

\begin{definition}\label{def:ASC}
An \textit{abstract simplicial complex (ASC)} is a finite collection $\mathcal{X}$ of finite sets such that if $\sigma \in \mathcal{X}$, then for all $\tau \subseteq \sigma$ we have $\tau \in \mathcal{X}$ (closed under inclusion of subsets).
\end{definition}

Let $\mathcal{X}$ be an ASC. Each element $\sigma \in \mathcal{X}$ is an \textit{abstract simplex}, or $n$-\textit{simplex}, if $|\sigma| = n+1$. The \textit{dimension} of $\sigma$ is defined as $\dim \sigma = n$, and the dimension of $\mathcal{X}$ is $\dim \mathcal{X} = \max_{\sigma \in \mathcal{X}} (\dim \sigma)$. A vertex of $\sigma$ is any element $v_i \in \sigma$, and $\sigma$ is determined by its vertices. We denote $n$-simplices as $\sigma^{(n)}$, and by convention $\emptyset \in \mathcal{X}$. The set of $k$-simplices in $\mathcal{X}$ is $X_k$, and the \textit{vertex set} of $\mathcal{X}$ is $V_{\mathcal{X}}$, satisfying $\mathcal{X} \subseteq \mathcal{P}(V_{\mathcal{X}})$. For $\sigma, \sigma' \in \mathcal{X}$, either $\sigma \cap \sigma' \in \sigma$ and $\in \sigma'$, or $\sigma \cap \sigma' = \emptyset$. The term \textit{simplicial complex} is often used interchangeably with ASC. A \textit{$k$-face} of an $n$-simplex $\sigma$ is a $k$-simplex $\tau \subseteq \sigma$. If $\tau \subset \sigma$, then $\tau$ is a \textit{proper face}. The $(n-1)$-faces of $\sigma$ form its \textit{boundary}. A simplex is \textit{maximal} if it is not a face of any other simplex in $\mathcal{X}$. A \textit{simplicial family} \cite{Johnson} is a collection of simplices not necessarily closed under inclusion. Although more general than an ASC, it determines one.

\begin{example}
A graph $G$ with the vertex set $V$ is a 1-dimensional ASC on $V$, where 0-simplices are vertices and 1-simplices are edges.
\end{example}

%-----------
\subsection{Abstract Directed Simplicial Complexes}
\label{sec:ADSC}

The assignment of a total ordering to the vertices of an $n$-simplex allows us to define an \textit{ordered simplex}. For instance, given $\{v_0, \ldots, v_n\}$, we impose $v_i < v_j$ whenever $i < j$, giving the ordered simplex $[v_0, \ldots, v_n]$. We denote an ordered $n$-simplex by $\sigma^{(n)} = [v_0, \ldots, v_n]$. The edges inherit the ordering, so $[v_i, v_j]$ satisfies $v_i < v_j$ if $i < j$. To simplify notation, we may write $v_i = i$ and represent a simplex as $[0 \ldots n] = [0, \ldots, n] = [v_0, \ldots, v_n]$, while remembering that labels do not necessarily match positional indices.

We now define an analog of ASC (Definition \ref{def:ASC}) for ordered simplices (from now on called \textit{directed simplices} \cite{Reimann}).

\begin{definition}\label{def:ADSC}
An \textit{abstract directed simplicial complex (ADSC)} is a finite collection $\mathcal{X}$ of totally ordered finite sets such that if $\sigma \in \mathcal{X}$, then every totally ordered subset $\tau \subseteq \sigma$ (with inherited ordering) also belongs to $\mathcal{X}$.
\end{definition}

All definitions and terminology established for ASCs also apply to ADSCs.

\begin{remark}
Two directed simplices may share the same vertex set but differ in ordering. Therefore, ADSCs are not ASCs in general, except in cases where the ordering uniquely identifies each simplex.
\end{remark}

Strictly speaking, an ADSC is an example of \textit{semi-simplicial set} \cite{Eilenberg, Friedman}, which generalizes both ASCs and ADSCs. 

A practical way to deal with the faces of a (directed) simplex is through \textit{face maps}, which are defined as follows.

%To handle the faces of (directed) simplices, we introduce the following:

\begin{definition}\label{facemap}
Let $\mathcal{X} = \bigcup_{k=0}^{\dim \mathcal{X}} X_k$ be an ASC or ADSC, where $X_k$ is the set of all (directed) $k$-simplices. For $0 \leq k \leq \dim \mathcal{X} - 1$ and $0 \leq i \leq k+1$, the \textit{$i$-th face map} $\hat{d}_i : X_{k+1} \to X_k$ is defined by $\hat{d}_i([v_0, \ldots, v_{k+1}]) = [v_0, \ldots, \hat{v_i}, \ldots, v_{k+1}]$, where $\hat{v_i}$ denotes the deletion of the $i$-th vertex from the simplex.
\end{definition}

%%%%%%%%%%%%%%%%%%%%%%%%%%%%%%%%%%%%%%%%%%
\section{Flag Complexes and Directed Flag Complexes}
\label{sec:DFC}

As mentioned in the previous section, ASCs generalize graphs and can be constructed in various ways from a given graph $G = (V, E)$ \cite{Jonsson, Maletic}. A standard and intuitive method is through the \textit{clique complex} (or \textit{flag complex}), formed by treating each $(n+1)$-clique of $G$ as an $n$-simplex~\cite{Aharoni}.

\begin{definition}
Given a graph $G = (V, E)$, its \textit{flag complex} (or \textit{clique complex}), denoted by $\mathrm{Fl}(G)$, is the ASC on $V$ whose $k$-simplices are the subsets of $V$ that span $(k+1)$-cliques in $G$.
\end{definition}

For directed graphs, the concept is extended via \textit{directed flag complexes} \cite{Masulli, Reimann}, which rely on the notion of \textit{directed cliques}.

\begin{definition}
A \textit{directed $(k+1)$-clique} is a digraph $G = (V, E)$ with $V = \{ v_0, \ldots, v_k \}$ such that its underlying undirected graph is a $(k+1)$-clique, and there exists a directed edge from $v_i$ to $v_j$ for all $0 \le i < j \le k$, i.e., $(v_i, v_j) \in E$.
\end{definition}

By definition, every directed $(k+1)$-clique has a source ($v_0$) and a sink ($v_k$). Examples for $k=0,1,2,3,4$ are illustrated in Figure~\ref{dir-clique}.
\begin{figure}[h!]
\centering
\includegraphics[scale=1.15]{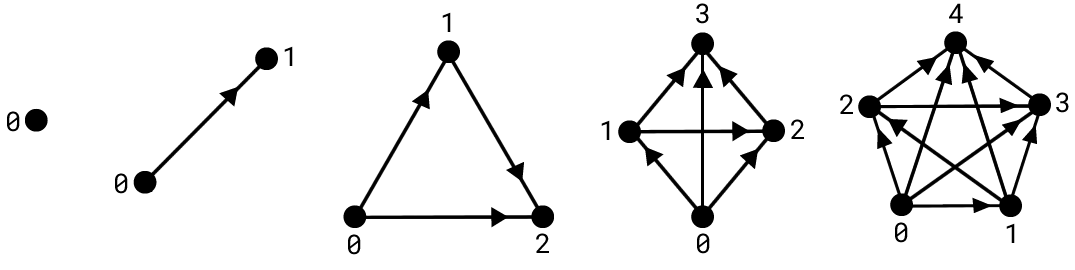}
\caption{Examples of directed $(k+1)$-cliques, for $k=0,1,2,3,4$.}
\label{dir-clique}
\end{figure}

\begin{definition}\label{def:DFC}
Given a digraph $G = (V, E)$, its \textit{directed flag complex} (DFC), denoted by $\mathrm{dFl}(G)$, is the ADSC whose directed $k$-simplices span directed $(k+1)$-cliques of $G$, i.e. for every $[v_{0}, \ldots,v_{k}] \in \mathrm{dFl}(G)$, we have $v_{i} \in V$, $\forall i$, and $(v_{i}, v_{j}) \in E$,  $\forall i < j$.
\end{definition}

\begin{definition}
The flag complex associated with the underlying undirected graph of a digraph is its \textit{underlying flag complex}.
\end{definition}

\begin{example}\label{ex:dir-flag-complex}
Figure~\ref{fig:flag-complex-2} represents a digraph $G$, its underlying flag complex $\mathrm{Fl(G)}$, and its DFC $\mathrm{dFl(G)}$.
\begin{figure}[h!]
\centering
\begin{subfigure}{.23\textwidth}
	\centering
	\includegraphics[scale=0.7]{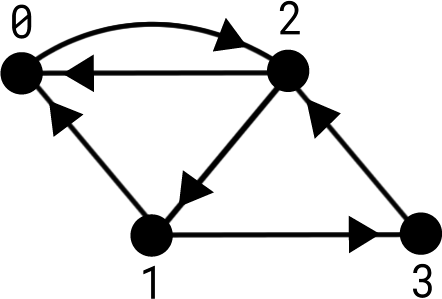}
	\caption{Digraph $G$.}
	\label{fig:dig}
\end{subfigure}%
\begin{subfigure}{.23\textwidth}
	\centering
	\includegraphics[scale=0.7]{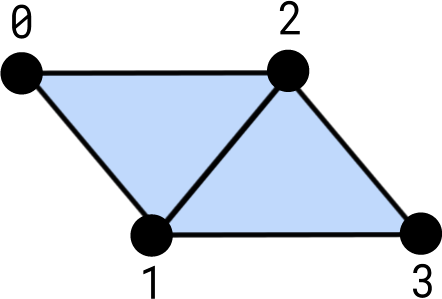}
	\caption{$\mathrm{Fl}(G)$.}
	\label{fig:flag}
\end{subfigure}
\begin{subfigure}{.23\textwidth}
	\centering
	\includegraphics[scale=0.7]{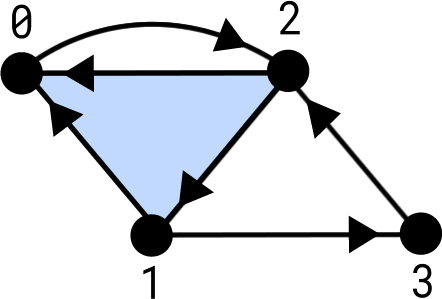}
	\caption{$\mathrm{dFl}(G)$.}
	\label{fig:dflag}
\end{subfigure}
\caption{A digraph $G$ together with its underlying flag complex $\mathrm{Fl(G)}$ and its DFC $\mathrm{dFl(G)}$.}
\label{fig:flag-complex-2}
\end{figure}
\end{example}

Although the flag complex associated with a graph is an example of ASC, a DFC associated with a digraph is an example of ADSC, since the directed simplices of a DFC may not be uniquely determined by their sets of vertices, as they may differ due to the ordering of their vertices (e.g., the set of vertices associated with a double edge spans two different directed cliques). On the other hand, the corresponding DFCs of digraphs \textit{without double edges} are ASCs with a specific order in their simplices.

\begin{example}
Figure~\ref{fig:dig} shows a digraph $G$ containing a double edge, and Figure~\ref{fig:dflag} shows its DFC. Note that the set $\{0,2\}$ spans two different directed $2$-cliques, namely $[0,2]$ and $[2,0]$.
\end{example}

\begin{remark}
In this text, the colors used to fill the cliques serve as a visual representation of the corresponding higher-dimensional directed simplices.
\end{remark}

%--------------------------------------
\section{Weighted Directed Flag Complexes}
\label{sec:weighted-dfc}

Previously, we focused on DFCs derived from unweighted digraphs. However, real-world digraphs often include weights on vertices and/or arcs, making it beneficial to extend DFCs to incorporate such weights. Dawson \cite{Dawson} introduced weighted simplicial complexes with natural number weights, which were later generalized to weights in a commutative ring by Ren et al. \cite{Ren}. The following definition extends this generalization to \textit{weighted DFCs}.

\begin{definition}
A \textit{weighted ADSC} is a pair $(\mathcal{X}, \widetilde{\omega})$, where $\mathcal{X}$ is an ADSC and $\widetilde{\omega} : \mathcal{X} \rightarrow \mathcal{R}$ is a function of a commutative ring $\mathcal{R}$, such that $\widetilde{\omega}(\tau)$ divides $\widetilde{\omega}(\sigma)$ whenever $\tau \subseteq \sigma$. The convention $0/0 = 0$ is assumed.
\end{definition}

To simplify the association of a single weight with each directed clique, it is useful to transform arc weights into node weights. One practical method is the following degree-based transformation.

\begin{definition}\label{def:weights-edge-node}
Let $G^{\omega} = (V, E, \omega)$ be a weighted digraph with arc-weight function $\omega : E \rightarrow \mathcal{R}$, where $\mathcal{R}$ is a commutative ring. The \textit{node-weight function} $\tilde{\omega} : V \rightarrow \mathcal{R}$ is defined as
\begin{equation}\label{eq:weight-func2}
\tilde{\omega}(i) = \max\big( \deg^{-}_{\omega}(i), \deg^{+}_{\omega}(i) \big).
\end{equation}
\end{definition}

This approach ensures nonzero weights for non-isolated nodes and considers both in- and out-degrees, though it may overemphasize a few strong connections relative to many weak ones.

Using this transformation, we define a weight function on simplices as follows.

\begin{definition}\label{def:prod-weight}
Let $G^{\omega}$ be a weighted digraph and $\mathrm{dFl}(G^{\omega})$ its DFC. Let $\tilde{\omega}(i)$ be the weight of node $i$ in a directed $n$-simplex $\sigma^{(n)} = [0,1,\ldots,n]$. The \textit{product-weight function} $\widetilde{\omega} : \mathrm{dFl}(G^{\omega}) \rightarrow \mathcal{R}$ is given by
\begin{equation}\label{eq:prod-weight}
\widetilde{\omega}(\sigma^{(n)}) = \prod_{i = 0}^{n} \tilde{\omega}(i).
\end{equation}
The pair $(\mathrm{dFl}(G^{\omega}), \widetilde{\omega})$ is called the \textit{weighted directed flag complex} (or \textit{weighted DFC}).
\end{definition}

Thus, the weighted DFC is a natural example of a weighted ADSC, defined via the product of node weights derived from arc weights.

%%%%%%%%%%%%%%%%%%%%%%%%%%%%%%%%%%%%%%%%%%
\section{Directed Q-Analysis and Directed Higher-Order Adjacencies}
\label{sec:directed-q-analysis}

Next, we briefly outline the main ideas of classical Q-analysis, followed by those of directed higher-order connectivity between directed simplices corresponding to a directed version of Q-analysis.

%-------------------------------------------
\subsection{A Brief Introduction to Q-Analysis}
\label{sec:q-analysis}

What later became known as Q-analysis was introduced by R. H. Atkin  \cite{Atkin1974a, Atkin1974b, Atkin1976}, whose initial goal was to develop mathematical tools for analyzing structures associated with relations in the context of social systems \cite{Atkin1977} by modeling social networks through simplicial complexes and studying the connections among their simplices, thereby highlighting the importance of the concept of topological connectivity. Sometimes, Q-analysis is referred to as a ``language of structure.'' Since Atkin's seminal work, several extensions and applications of Q-analysis have been proposed \cite{Barcelo, Kramer}, together with many new concepts \cite{Johnson}.

One of its cornerstones is \textit{$q$-connectivity} for simplicial complexes, representing \textit{higher-order connectivity} across different levels of organization. Classical Q-analysis dealt only with higher-order connectivity without any consideration of connection directionality. 

This section rests on \cite{Atkin1977, Earl, Johnson}, with $\mathcal{X}$ standing for an arbitrary simplicial complex.

\begin{definition}\label{def:q-nearness-q-connectivity}
Given $\sigma^{(n)}, \tau^{(m)} \in \mathcal{X}$, they are called \textit{$q$-near} with $0 \le q \le \min(n,m)$ if they share a $q$-face, and in this case, we denote $\sigma^{(n)} \sim_{q} \tau^{(m)}$. In particular, we say that an $n$-simplex is $n$-near to itself. Also, $\sigma^{(n)}$ and $\tau^{(m)}$ are called \textit{$q$-connected} if there exists a finite number of simplices $\alpha_{i}^{(n_{i})} \in \mathcal{X}$, let's put $i=1, \ldots,l$, with $0 \le q \le \min(n,m,n_{1}, \ldots,n_{l})$, such that
\begin{equation}
\sigma^{(n)} \sim_{q_{0}} \alpha_{1}^{(n_{1})} \sim_{q_{1}} \ldots  \sim_{q_{l-1}} \alpha_{l}^{(n_{l})} \sim_{q_{l}} \tau^{(m)},
\end{equation}

\noindent where $q \le q_{j}$, for all $j=0, \ldots,l$, and in this case, we denote\footnote{As a convention, here we use the symbol $\sim_{q}$ to denote $q$-nearness and its bold version $\bm{\sim}_{\bm{q}}$ to denote $q$-connectivity.} $\sigma^{(n)} \bm{\sim}_{\bm{q}} \tau^{(m)}$. We call this sequence a \textit{chain of $q$-connection}. In addition, we say that $\sigma^{(n)}$ and $\tau^{(m)}$ are $q$-connected by a \textit{chain} of length $l$. In particular, an $n$-simplex is said to be $n$-connected to itself by a chain of length $0$. 
\end{definition}

%\begin{example}
%Consider the simplicial complex shown in Figure \ref{fig:example-q-conn1}. The simplices $\sigma$ and $\tau$ are $0$-connected by a chain of length $4$: $\sigma \sim_{1} \alpha_{1} \sim_{1} \alpha_{2} \sim_{1} \alpha_{3} \sim_{1} \alpha_{4} \sim_{0} \tau$. At the same time, they are $1$-connected  by a chain of length $5$: $\sigma \sim_{1} \alpha_{1} \sim_{1} \alpha_{5} \sim_{1} \alpha_{6} \sim_{1} \alpha_{7} \sim_{1} \alpha_{8} \sim_{1} \tau$.
%\end{example}

Note that $q$-nearness is a particular case of $q$-connectivity. Moreover, by definition, if two simplices are $q$-connected, then they are $q'$-connected for all $q' < q$.

\begin{proposition}
Given $\mathcal{X}$, define the set $\mathcal{X}_{q} = \{\sigma^{(n)} \in \mathcal{X} : q \le n \}$. The relation  ``is $q$-connected to'' ($\bm{\sim}_{\bm{q}}$) is an equivalence relation on the set $\mathcal{X}_{q}$.
\end{proposition}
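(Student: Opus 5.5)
We check reflexivity, symmetry and transitivity of $\bm{\sim}_{\bm{q}}$ on $\mathcal{X}_{q}$ directly from Definition \ref{def:q-nearness-q-connectivity}, using the chain of $q$-connection as the basic object. A single $q$-nearness is the chain with $l=0$ intermediate simplices, $\sigma^{(n)} \sim_{q_0} \tau^{(m)}$ with $q \le q_0$.

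\emph{Reflexivity.} Let $\sigma^{(n)} \in \mathcal{X}_{q}$, so $q \le n$. By convention $\sigma^{(n)}$ is $n$-near to itself, that is, it shares with itself the $n$-face $\sigma^{(n)}$. Since $q \le n$, the length-$0$ chain $\sigma^{(n)} \sim_{n} \sigma^{(n)}$ satisfies the requirement $q \le q_0 = n$, and $q \le \min(n,n)$ holds. Hence $\sigma^{(n)} \bm{\sim}_{\bm{q}} \sigma^{(n)}$.

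This is the one place where restricting to $\mathcal{X}_{q}$ matters. A simplex of dimension less than $q$ has no $q$-face, so it cannot be $q$-near or $q$-connected to anything, itself included. This shows why the statement is made on $\mathcal{X}_{q}$ and not on all of $\mathcal{X}$.

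\emph{Symmetry.} Sharing a $q_j$-face is a symmetric condition, so $\alpha \sim_{q_j} \beta$ implies $\beta \sim_{q_j} \alpha$. Given a chain
\[
\sigma^{(n)} \sim_{q_{0}} \alpha_{1}^{(n_{1})} \sim_{q_{1}} \cdots \sim_{q_{l-1}} \alpha_{l}^{(n_{l})} \sim_{q_{l}} \tau^{(m)},
\]
we reverse it to get
\[
\tau^{(m)} \sim_{q_{l}} \alpha_{l}^{(n_{l})} \sim_{q_{l-1}} \cdots \sim_{q_{1}} \alpha_{1}^{(n_{1})} \sim_{q_{0}} \sigma^{(n)}.
\]
This uses the same simplices and the same indices $q_j$. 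Both conditions in the definition, $q \le q_j$ for all $j$ and $q \le \min(n, m, n_1, \ldots, n_l)$, are unchanged because they do not depend on the order.

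\emph{Transitivity.} Suppose $\sigma \bm{\sim}_{\bm{q}} \tau$ via a chain with intermediates $\alpha_1, \ldots, \alpha_l$, and $\tau \bm{\sim}_{\bm{q}} \rho$ via a chain with intermediates $\beta_1, \ldots, \beta_r$. We concatenate them, keeping $\tau$ as an intermediate simplex:
\[
\sigma \sim \alpha_1 \sim \cdots \sim \alpha_l \sim \tau \sim \beta_1 \sim \cdots \sim \beta_r \sim \rho.
\]
This is a chain of length $l+r+1$. Every nearness index in it is at least $q$, because each index comes from one of the two original chains. The dimensions of $\sigma$, $\rho$, $\tau$ and all the $\alpha_i, \beta_j$ are also at least $q$, by the hypotheses on the two chains. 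Hence $\sigma \bm{\sim}_{\bm{q}} \rho$.

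The only point needing care is bookkeeping. The concatenation must include $\tau$ explicitly as an intermediate simplex, so that the index conditions carry over.
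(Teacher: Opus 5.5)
Your proof is correct and is the standard direct check: reflexivity comes from $n$-nearness of $\sigma^{(n)}$ to itself together with $q \le n$ on $\mathcal{X}_{q}$, symmetry from reversing the chain, and transitivity from concatenating the two chains through $\tau$. The paper gives no proof of its own here and simply cites Riihimäki; your argument follows the same reflexive/symmetric/concatenation pattern the paper uses later for the relation $S^{s}_{q}$.
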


A simple proof of the previous proposition can be found in \cite{Riihimaki}. Furthermore, the \textit{$q$-connected components} of $\mathcal{X}$ are defined as the elements of the quotient set $\mathcal{X}_{q}/\bm{\sim}_{\bm{q}}$. As introduced in Atkin's work, performing a \textit{Q-analysis} on a simplicial complex means computing its $q$-connected components for $0 \le q \le \dim \mathcal{X}$ and then summarizing the number of such components existing at each level $q$ into a \textit{structure vector}, $\mathbf{f}(\mathcal{X})$, whose entries count the number of $q$-connected components at each dimension.

Moreover, for $0 \le q \le \dim \mathcal{X}$, below we list and briefly describe some essential Q-analysis concepts that will be useful in this text:

\begin{itemize}
\item \textbf{$q$-graph}: A graph whose vertices are simplices in $\mathcal{X}_q$, with edges connecting $q$-near simplices.

\item \textbf{$q$-star} of a simplex $\sigma$: The set of all simplices $q$-near to $\sigma$, i.e. $\mathrm{st}_{q}(\sigma) = \{ \tau \in \mathcal{X} : \sigma \sim_{q} \tau \}$.

\item \textbf{Hub} of a simplicial family $\mathcal{F}$: The intersection of all simplices in $\mathcal{F}$, i.e. $\mathrm{hub}(\mathcal{F}) = \bigcap_{\sigma \in \mathcal{F}} \sigma$.

\item \textbf{Link} of a simplex $\sigma$: The set $\mathrm{lk}(\sigma) = \{\tau \in \mathcal{X} : \sigma \cap \tau = \emptyset \text{ and } \sigma \cup \tau \in \mathcal{X}\}$, generalizing the notion of a neighborhood in graphs.
\end{itemize}

\begin{example}
Considering the simplicial complex, $\mathcal{X}$, presented in Figure~\ref{fig:example-q-conn1}, we have the following examples: its structure vector is $\mathbf{f}(\mathcal{X}) = (2,2,11,1)$;  $\sigma \sim_{1} \alpha_{1} \sim_{1} \alpha_{2} \sim_{1} \alpha_{3} \sim_{1} \alpha_{4} \sim_{0} \tau$ and $\sigma \sim_{1} \alpha_{1} \sim_{1} \alpha_{5} \sim_{1} \alpha_{6} \sim_{1} \alpha_{7} \sim_{1} \alpha_{8} \sim_{1} \tau$; the $1$-star of the simplex $\alpha_{1}$ is $\mathrm{st}_{1}(\alpha_{1}) = \{ \sigma, \alpha_{2}, \alpha_{5}  \}$; the hub of $\mathcal{F} = \{ \alpha_{1},\alpha_{2}, \alpha_{3}, \alpha_{5}, \alpha_{6} \}$ is $\mbox{hub}(\mathcal{F}) = \{ 3 \}$; the link of the $1$-simplex $\{12, 13\}$ is $\{10, 11\}$.
\end{example}

%--------------------------------------------------------------
\subsection{Directed Q-Analysis and Directed Higher-Order Adjacencies}
\label{sec:dir-q-analysis}

As we saw in the previous section, Atkin's Q-analysis defines $q$-connectivity between two simplices solely in terms of the faces they share and does not specify the direction of this connection. Recently, however, H. Riihimäki \cite{Riihimaki} introduced a formalism to treat the $q$-connectivity between directed simplices, thus creating a directed analog of Q-analysis for directed flag complexes. 

Moreover, unlike adjacencies between vertices in a graph, when dealing with simplices, we can distinguish between two types of adjacencies: \textit{lower} and \textit{upper} adjacencies. Lower adjacencies compare how two simplices share their faces, and upper adjacencies tell us how they are nested in other higher-dimensional simplices \cite{Estrada, Goldberg, Serrano2020}. Accordingly, this section provides suitable definitions of lower and upper adjacencies for directed simplices.

In what follows, we present the concept of \textit{$(q, \hat{d_{i}}, \hat{d_{j}})$-connectivity} as introduced in \cite{Riihimaki}. We then extend the concepts of lower, upper, and general adjacencies, as presented in \cite{Serrano2020}, to directed simplices. We will limit our discussion to digraphs without double edges for simplicity, although the theory applies to digraphs with such structures. Henceforth, let $G$ denote a simple digraph \textit{without double edges}, and  $\mathrm{dFl}(G)$ denote the DFC associated with $G$.

%---------
\subsubsection{$(q, \hat{d_{i}}, \hat{d_{j}})$-Connectivity Between Directed Simplices}

The direction of the connection between two directed simplices is determined by a slightly modified face map, as defined below.

\begin{definition}\label{def:mod-face-map}
Given $\sigma^{(n)} = [v_{0}, \ldots,v_{n}] \in \mathrm{dFl}(G)$, the \textit{$i$-th face map} $\hat{d_{i}}$ is defined as 
\begin{equation}\label{eq:face-map}
\hat{d}_{i}(\sigma^{(n)}) = \begin{cases}
	[v_{0}, \ldots,\hat{v}_{i}, \ldots,v_{n}], \mbox{ if } i < n,\\
	[v_{0}, \ldots,v_{n-1}, \hat{v}_{n}], \mbox{ if } i \ge n.
\end{cases}
\end{equation}
\end{definition}

\begin{definition}\label{def:qij-nearness}
Let $\sigma^{(n)}, \tau^{(m)} \in \mathrm{dFl}(G)$. For $0 \le q \le \min(n,m)$, we say that $\sigma^{(n)}$ is \textit{$(q, \hat{d}_{i},\hat{d}_{j})$-near} to $\tau^{(m)}$ if either of the following conditions is satisfied:
\begin{enumerate}
\item $\sigma^{(n)} \subseteq \tau^{(m)}$;
\item $\hat{d_{i}}(\sigma^{(n)}) \supseteq \alpha^{(q)} \subseteq \hat{d_{j}}(\tau^{(m)})$, for some $\alpha^{(q)} \in \mathrm{dFl}(G)$ (i.e. if they share a $q$-face).
\end{enumerate}
\end{definition}

The previous definition established a concept of directionality in the connection between two directed simplices based on \textit{how} their faces are shared. To draw analogies with digraph definitions for the higher-order case, we propose the following definitions/notations.

\begin{definition}\label{def:qij-nearness-new}
For $\sigma^{(n)}, \tau^{(m)} \in \mathrm{dFl}(G)$ and for $0 \le q \le \min(n,m)$, we have the following definitions:

\begin{enumerate}
\item $\sigma^{(n)}$ is said to be \textit{in-$q$-near} (or \textit{$(-)$-$q$-near}) to  $\tau^{(m)}$ if they are $(q, d_{i}(\sigma^{(n)}), d_{j}(\tau^{(m)}))$-near with $i \ge j$. In this case, we denote $\sigma^{(n)} \sim_{q}^{-} \tau^{(m)}$.

\item $\sigma^{(n)}$ is said to be \textit{out-$q$-near} (or \textit{$(+)$-$q$-near}) to $\tau^{(m)}$ if they are $(q, d_{i}(\sigma^{(n)}), d_{j}(\tau^{(m)}))$-near with $i \le j$. In this case, we denote $\sigma^{(n)} \sim_{q}^{+} \tau^{(m)}$.

\item $\sigma^{(n)}$ is said to be \textit{bidirectionally $q$-near} (or \textit{$(\pm)$-$q$-near}) to $\tau^{(m)}$ if $\sigma^{(n)} \sim_{q}^{-} \tau^{(m)}$ and $\sigma^{(n)} \sim_{q}^{+} \tau^{(m)}$. In this case, we denote $\sigma^{(n)} \sim_{q}^{\pm} \tau^{(m)} = \tau^{(m)}  \sim_{q}^{\pm} \sigma^{(n)}$.
\end{enumerate}
\end{definition}

By definition, $\sigma^{(n)} \sim_{q}^{-} \tau^{(m)} = \tau^{(m)} \sim_{q}^{+} \sigma^{(n)} $ and $\sigma^{(n)} \sim_{q}^{+} \tau^{(m)} = \tau^{(m)} \sim_{q}^{-} \sigma^{(n)} $. Also, if $\sigma^{(n)} \subseteq \tau^{(m)}$, then $\sigma^{(n)} \sim_{q}^{\pm} \tau^{(m)}$.

\begin{notation}
We use the notation $\sigma^{(n)} \sim_{q}^{\bullet} \tau^{(m)}$, where  $\bullet \in \{-,+,\pm \}$, to represent the respective connectivity relation by replacing $\bullet$ with its respective symbol.
\end{notation}

\begin{definition}\label{def:dir-q-connectivity}
Given $\sigma^{(n)}, \tau^{(m)} \in \mathrm{dFl}(G)$, we say that $\sigma^{(n)}$ is \textit{$(\bullet)$-$q$-connected} to $\tau^{(m)}$, where $\bullet \in \{-,+\}$, if there exists a finite number of simplices $\alpha_{i}^{(n_{i})} \in \mathrm{dFl}(G)$, let's put $i=1, \ldots, l$, with $0 \le q \le \min(n,m,n_{1}, \ldots,n_{l})$, such that
\begin{equation}
\sigma^{(n)} \sim_{q_{0}}^{\bullet} \alpha_{1}^{(n_{1})} \sim_{q_{1}}^{\bullet}  \ldots  \sim_{q_{l-1}}^{\bullet}  \alpha_{l}^{(n_{l})} \sim_{q_{l}}^{\bullet}  \tau^{(m)},
\end{equation}

\noindent where $q \le q_{j}$, for all $j=0, \ldots,l$, and in this case we denote $\sigma^{(n)} \bm{\sim}_{\bm{q}}^{\bullet} \tau^{(m)}$. We say that $\sigma^{(n)}$ is $(\bullet)$-$q$-connected to $\tau^{(m)}$ by a \textit{directed $(\bullet)$-$q$-chain} of length $l$. We denote $\sigma^{(n)} \bm{\sim}_{\bm{q}}^{\pm} \tau^{(m)}$ if $\sigma^{(n)} \bm{\sim}_{\bm{q}}^{+} \tau^{(m)}$ and $\sigma^{(n)} \bm{\sim}_{\bm{q}}^{-} \tau^{(m)}$ and we say they are \textit{$(\pm)$-$q$-connected}. In particular, a directed $n$-simplex is said to be $(\pm)$-$q$-connected to itself by a directed $(\pm)$-$q$-chain of length $0$. Finally, for $\bullet \in \{-,+\}$, if $\sigma^{(n)}$ is not $(\bullet)$-$q$-connected to $\tau^{(m)}$ for all $\bullet$, we say that they are \textit{$q$-disconnected}.
\end{definition}

If two directed simplices are $(\bullet)$-$q$-near, $\bullet \in \{-,+\}$, then they are $(\bullet)$-$q$-connected by a directed $(\bullet)$-chain of length $0$. Moreover, if two directed simplices are $(\bullet)$-$q$-connected, then they are $(\bullet)$-$q'$-connected for all $q' < q$.

\begin{example}
Consider the DFC presented in Figure~\ref{fig:dfc-example}. We have the following relations:

\smallskip

1) $\theta \sim_{0}^{\pm} \tau$, since $\hat{d}_{0}(\theta) \supseteq  [2] \subseteq \hat{d}_{2}(\tau)$ and $\hat{d}_{2}(\theta) \supseteq  [2] \subseteq \hat{d}_{1}(\tau)$.

2) $\sigma \sim_{0}^{\pm} \tau$, since $\hat{d}_{1}(\sigma) \supseteq  [2] \subseteq \hat{d}_{2}(\tau)$ and $\hat{d}_{2}(\sigma) \supseteq  [2] \subseteq \hat{d}_{1}(\tau)$.

3) $\theta \sim_{1}^{+} \sigma$, since $\hat{d}_{0}(\theta) \supseteq  [2,6] \subseteq \hat{d}_{2}(\sigma)$.

4) $\alpha \sim_{1}^{+} \theta$, since $\hat{d}_{0}(\alpha) \supseteq  [1,6] \subseteq \hat{d}_{1}(\theta)$.

5) $\alpha \bm{\sim}_{\bm{0}}^{+} \tau$, since $\alpha \sim_{1}^{+} \theta \sim_{0}^{+} \tau$.
\end{example}

\begin{figure}[h!]
\centering
\begin{subfigure}{.45\textwidth}
\centering
\includegraphics[scale=0.69]{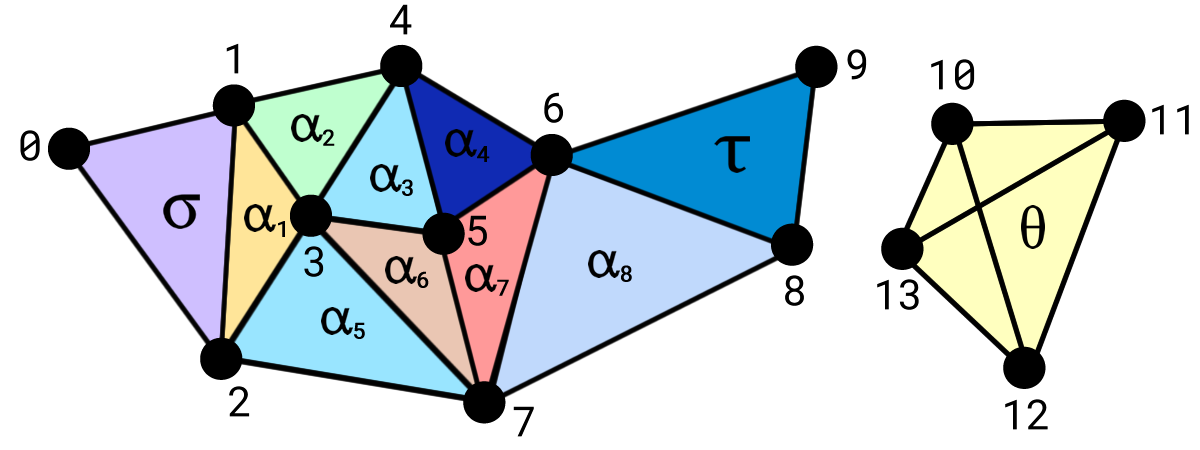}
\caption{A simplicial complex.}
\label{fig:example-q-conn1}
\end{subfigure}%
\begin{subfigure}{.45\textwidth}
\centering
\includegraphics[scale=0.7]{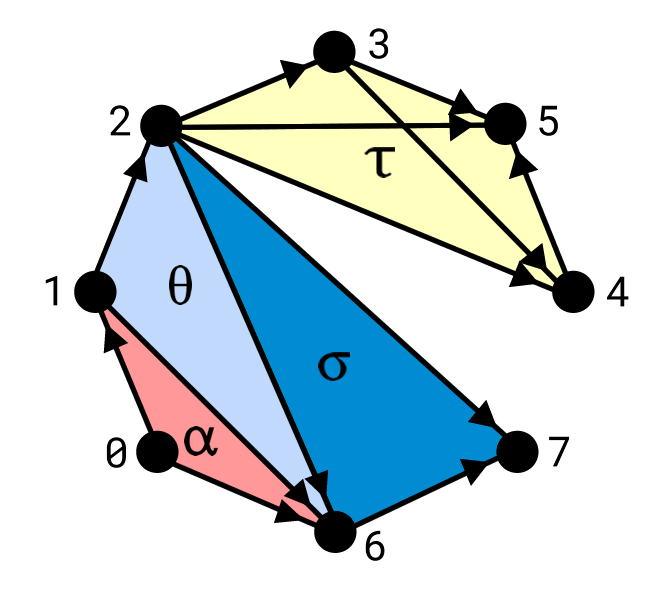}
\caption{A directed flag complex.}
\label{fig:dfc-example}
\end{subfigure}
\caption{A simplicial complex and a directed flag complex.}
\label{fig:flag-complex-3}
\end{figure}

%------ LOWER ADJ --------
\subsubsection{Lower, Upper, and General Adjacencies}

In what follows, we extend the definitions of lower, upper, and general adjacencies from \cite{Serrano2020} to directed simplices.

\begin{definition}
For $\sigma^{(n)}, \tau^{(m)} \in \mathrm{dFl}(G)$ and for $0 \le q \le \min(n,m)$, we have the following definitions:

\begin{enumerate}
\item $\sigma^{(n)}$ is \textit{lower $(\bullet)$-$q$-adjacent} to  $\tau^{(m)}$, where $\bullet \in \{-, +, \pm\}$, if and only if $\sigma^{(n)}$ is $(\bullet)$-q-near to $\tau^{(m)}$ , i.e.
$$
\sigma^{(n)} \sim_{L_{q}}^{\bullet} \tau^{(m)} \iff \sigma^{(n)}  \sim_{q}^{\bullet} \tau^{(m)}.
$$

\item $\sigma^{(n)}$ is \textit{strictly lower $(\bullet)$-$q$-adjacent} to $\tau^{(m)}$, where $\bullet \in \{-, +, \pm\}$, if and only if $\sigma^{(n)} \sim_{L_{q}}^{\bullet} \tau^{(m)}$ and $\sigma^{(n)}$ is not ($\star$)-$(q+1)$-adjacent to $\tau^{(m)}$, for all $\star \in \{-,+,\pm\}$, i.e.
$$
\sigma^{(n)} \sim_{L_{q^{*}}}^{\bullet} \tau^{(m)} \iff \sigma^{(n)} \sim_{L_{q}}^{\bullet} \tau^{(m)} \mbox{ and }  \sigma^{(n)} \not\sim_{L_{q+1}}^{\star} \tau^{(m)}, \mbox{ } \forall \star.
$$
\end{enumerate}
\end{definition}

We point out that lower $(\bullet)$-$q$-adjacency and $(\bullet)$-$q$-nearness are exactly the same definitions, thus we propose \textit{lower $(\bullet)$-$q$-nearness} as an alternative nomenclature to $(\bullet)$-$q$-nearness. Also, vertices are only lower $(\bullet)$-$0$-adjacent to themselves.

In addition to lower adjacency, which is the sharing of faces between two simplices, we can consider \textit{upper adjacency}, which is the nesting of simplices in other simplices of larger dimensions. We must specify exactly how the directionality between two directed simplices that are faces of other directed simplices of higher dimension is accounted for when extending upper adjacency to directed simplices. Thus, based on Definition \ref{def:qij-nearness}, we propose the definition of \textit{upper $(p, \hat{d}_{i}, \hat{d}_{j})$-nearness} as follows.

\begin{definition}
Let $\sigma^{(n)}, \tau^{(m)} \in \mathrm{dFl}(G)$. For $n,m < p \le \dim \mathrm{dFl}(G)$, $\sigma^{(n)}$ is said to be \textit{upper $(p, \hat{d}_{i}, \hat{d}_{j})$-near} to $\tau^{(m)}$ if the following condition is true:
$$
\sigma^{(n)} = \hat{d_{i}}(\Theta^{(n+1)}) \subseteq \Theta^{(p)} \supseteq  \hat{d_{j}}(\Theta^{(m+1)}) = \tau^{(m)}, \mbox{ for some } \Theta^{(n+1)}, \Theta^{(m+1)} \subseteq \Theta^{(p)} \in \mathrm{dFl}(G).
$$
\end{definition}

\begin{definition}
For $\sigma^{(n)}, \tau^{(m)} \in \mathrm{dFl}(G)$ and for $n,m  < p \le \dim \mathrm{dFl}(G)$, we have the following definitions:

\begin{enumerate}
\item $\sigma^{(n)}$ is \textit{upper $(-)$-$p$-adjacent} to $\tau^{(m)}$  if and only if $\sigma^{(n)}$ is upper $(p, \hat{d}_{i}, \hat{d}_{j})$-near to $\tau^{(m)}$ and $i \ge j$, i.e.
$$
\sigma^{(n)} \sim_{U_{p}}^{-} \tau^{(m)} \iff \sigma^{(n)} \mbox{ is upper } (p, \hat{d}_{i}, \hat{d}_{j})\mbox{-near to } \tau^{(m)} \mbox{ with } i \ge j.
$$

\item $\sigma^{(n)}$ is  \textit{upper $(+)$-$p$-adjacent} to $\tau^{(m)}$  if and only if $\sigma^{(n)}$ is upper $(p, \hat{d}_{i}, \hat{d}_{j})$-near to $\tau^{(m)}$ and $i \le j$, i.e.
$$
\sigma^{(n)} \sim_{U_{p}}^{+} \tau^{(m)} \iff \sigma^{(n)} \mbox{ is upper } (p, \hat{d}_{i}, \hat{d}_{j})\mbox{-near to } \tau^{(m)} \mbox{ with } i \le j.
$$

\item $\sigma^{(n)}$ is \textit{upper $(\pm)$-$p$-adjacent} to $\tau^{(m)}$  if and only if $\sigma^{(n)}$ is upper $(-)$-$p$-adjacent and upper $(+)$-$p$-adjacent to $\tau^{(m)}$, i.e.
$$
\sigma^{(n)} \sim_{U_{p}}^{\pm} \tau^{(m)} \iff \sigma^{(n)} \sim_{U_{p}}^{-} \tau^{(m)} \mbox{ and }  \sigma^{(n)} \sim_{U_{p}}^{+} \tau^{(m)}.
$$

\item $\sigma^{(n)}$ is \textit{strictly $(\bullet)$-$p$-upper adjacent} to $\tau^{(m)}$, where $\bullet \in \{-,+,\pm\}$, if and only if $\sigma^{(n)} \sim_{U_{p}}^{\bullet} \tau^{(m)}$ and $\sigma^{(n)}$ is not $(\star)$-$(p+1)$-adjacent to $\tau^{(m)}$, for all $\star \in \{-,+,\pm\}$, i.e.
$$
\sigma^{(n)} \sim_{U_{p^{*}}}^{\bullet} \tau^{(m)} \iff \sigma^{(n)} \sim_{U_{p}}^{\bullet} \tau^{(m)} \mbox{ and } \sigma^{(n)} \not\sim_{U_{p+1}}^{\star} \tau^{(m)}, \mbox{ } \forall \star.
$$
\end{enumerate}
\end{definition}

If $(v,u)$ is an arc in $G$, then $v \sim_{U_{1}}^{+} u$. On the other hand, if $(u,v)$ is an arc in $G$, then $v \sim_{U_{1}}^{-} u$.

\begin{definition}\label{def:general-adj}
For $\sigma^{(n)}, \tau^{(m)} \in \mathrm{dFl}(G)$, we have the following definitions:

\begin{enumerate}
\item $\sigma^{(n)}$ is \textit{$(\bullet)$-$q$-adjacent} to $\tau^{(m)}$, where $\bullet \in \{-,+,\pm\}$, if and only if $\sigma^{(n)}$ is strictly lower $(\bullet)$-$q$-adjacent to $\tau^{(m)}$ and $\sigma^{(n)}$ is not upper $(\star)$-$p$-adjacent to $\tau^{(m)}$, with $p = n+m-q$, for all $\star \in \{-, +, \pm\}$, i.e.
$$
\sigma^{(n)} \sim_{A_{q}}^{\bullet} \tau^{(m)} \iff \sigma^{(n)} \sim_{L_{q^{*}}}^{\bullet} \tau^{(m)}  \mbox{ and } \sigma^{(n)} \not\sim^{\star}_{U_{p}} \tau^{(m)}, \mbox{ } \forall \star.
$$

\item $\sigma^{(n)}$ is \textit{maximal $(\bullet)$-$q$-adjacent} to $\tau^{(m)}$, where $\bullet \in \{-,+,\pm\}$, if and only if  $\sigma^{(n)}$ is $(\bullet)$-$q$-adjacent to $\tau^{(m)}$ and $\sigma^{(n)}$ is not a face of any other directed simplex which is $(\star)$-$q$-adjacent to $\tau^{(m)}$,  for all $\star \in \{-, +, \pm\}$, i.e.
$$
\sigma^{(n)} \sim_{A_{q^{*}}}^{\bullet} \tau^{(m)} \iff \sigma^{(n)} \sim_{A_{q}}^{\bullet} \tau^{(m)} \mbox{ and } \sigma^{(n)} \not\subset \sigma^{(r)}, \mbox{ } \forall \sigma^{(r)} \mbox{ : } \sigma^{(r)} \sim_{A_{q}}^{\star} \tau^{(m)}, \mbox{ } \forall \star.
$$
\end{enumerate}
\end{definition}

\begin{remark}
The quantity $p=m+n-q$ comes from the fact that if $\sigma^{(n)} \sim_{L_{q^{*}}}^{\bullet} \tau^{(m)}$, then they share $(q+1)$ vertices and thus the smallest directed simplex which might contain $\sigma^{(n)}$ and $\tau^{(m)}$ as faces must have $(n+1)+(m+1) - (q+1)$ vertices.
\end{remark}

\begin{definition}\label{def:set-maximal-simplices}
Given $\mathrm{dFl}(G)$ and $0 \le q \le \dim \mathrm{dFl}(G)$, we define the following subsets:

\begin{enumerate}
\item $\mathrm{dFl}_{q}(G) = \{ \sigma^{(n)} \in \mathrm{dFl}(G) : q \le n \}.$

\item $\mathrm{dFl}^{\ast}(G) = \{ \sigma^{(n)} \in \mathrm{dFl}(G) : \sigma^{(n)} \mbox{ is maximal} \}.$

\item $\mathrm{dFl}^{\ast}_{q}(G) = \{ \sigma^{(n)} \in \mathrm{dFl}^{\ast}(G) : q \le n \}.$
\end{enumerate}
\end{definition}

algebraic-topological invariants
\begin{proposition}\label{prop:maximal-adjacency}
For $\sigma^{(n)}, \tau^{(m)} \in \mathrm{dFl}^{\ast}(G)$ and for $\bullet \in \{-,+,\pm \}$, we have the following equivalence:
$$
\sigma^{(n)} \sim^{\bullet}_{L_{q^{*}}} \tau^{(m)} \iff \sigma^{(n)} \sim^{\bullet}_{A_{q^{*}}} \tau^{(m)}.
$$
\end{proposition}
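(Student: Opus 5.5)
The plan is to unwind the definitions and prove the two implications separately, with maximality of both simplices doing all the work. The direction $\sigma^{(n)} \sim^{\bullet}_{A_{q^{*}}} \tau^{(m)} \Rightarrow \sigma^{(n)} \sim^{\bullet}_{L_{q^{*}}} \tau^{(m)}$ is immediate. By Definition \ref{def:general-adj}, maximal $(\bullet)$-$q$-adjacency implies $(\bullet)$-$q$-adjacency. That in turn includes strict lower $(\bullet)$-$q$-adjacency as one of its defining conjuncts. No use of maximality is needed here.

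For the converse, assume $\sigma^{(n)}, \tau^{(m)} \in \mathrm{dFl}^{\ast}(G)$ and $\sigma^{(n)} \sim^{\bullet}_{L_{q^{*}}} \tau^{(m)}$. Two further conditions must be checked.

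First, $\sigma^{(n)}$ is not upper $(\star)$-$p$-adjacent to $\tau^{(m)}$ for $p=n+m-q$ and any $\star$. Suppose it were. Then there would be a directed simplex $\Theta^{(p)} \in \mathrm{dFl}(G)$ containing faces $\Theta^{(n+1)} \supset \sigma^{(n)}$ (as $\hat d_i$ of it), and likewise $\Theta^{(m+1)}$ for $\tau^{(m)}$. In particular $\sigma^{(n)} \subsetneq \Theta^{(n+1)} \subseteq \Theta^{(p)}$, which contradicts the maximality of $\sigma^{(n)}$. In fact, maximal simplices cannot be upper adjacent to anything at any level $p>n$.

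Second, $\sigma^{(n)}$ is not a proper face of any $\sigma^{(r)}$ that is $(\star)$-$q$-adjacent to $\tau^{(m)}$. Maximality of $\sigma^{(n)}$ gives this directly, because $\sigma^{(n)}$ is a proper face of no simplex at all. So the quantified condition holds vacuously.

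Together these give $\sigma^{(n)} \sim^{\bullet}_{A_{q}} \tau^{(m)}$ and then $\sigma^{(n)} \sim^{\bullet}_{A_{q^{*}}} \tau^{(m)}$.

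The only delicate point is the boundary case $\sigma^{(n)} = \tau^{(m)}$, or more generally containment $\sigma^{(n)} \subseteq \tau^{(m)}$, which is allowed by condition 1 of Definition \ref{def:qij-nearness}. In that case $p=n+m-q$ could be at most $n$, and the range condition $n,m<p$ in the upper-adjacency definition fails. Upper adjacency at that $p$ is then simply undefined, or false, and I would treat it as vacuously not holding. For two distinct maximal simplices neither contains the other, so they share only $q+1 \le \min(n,m)$ vertices. Hence $p > \max(n,m)$ and the upper-adjacency argument above applies as stated.

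I expect the main obstacle to be only this bookkeeping of degenerate cases and the exact reading of quantifiers over $\star$. The core of the argument is the single observation that maximality forbids $\sigma^{(n)}$ from being a proper face of any simplex.
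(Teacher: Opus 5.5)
Your proposal is correct and follows the paper's own argument. The implication from maximal to strictly lower adjacency holds by definition, and for the converse, maximality of both simplices rules out upper adjacency and makes the proper-face condition vacuous. Your extra bookkeeping for $\sigma^{(n)}=\tau^{(m)}$ is harmless but unnecessary: the contradiction $\sigma^{(n)} \subsetneq \Theta^{(n+1)}$ with maximality does not depend on the range of $p$.
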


\begin{proof}
Suppose $\sigma^{(n)} \sim^{\bullet}_{L_{q^{*}}} \tau^{(m)}$. Since $\sigma^{(n)}, \tau^{(m)} \in \mathrm{dFl}^{\ast}(G)$, both simplices are not faces of any other simplices in $\mathrm{dFl}(G)$, then $\sigma^{(n)} \sim^{\bullet}_{A_{q^{*}}} \tau^{(m)}$. On the other hand, if $\sigma^{(n)} \sim^{\bullet}_{A_{q^{*}}} \tau^{(m)}$, by definition, $\sigma^{(n)} \sim^{\bullet}_{L_{q^{*}}} \tau^{(m)}$.
\end{proof}

%------ WALKS --------
\subsubsection{Directed Simplicial $q$-Walks and $q$-Distances} 

In Definition \ref{def:dir-q-connectivity}, we implicitly defined $(\bullet)$-$q$-connectivity in terms of \textit{lower} $(\bullet)$-$q$-adjacency; therefore, from now on, we will refer to this connectivity as \textit{lower $(\bullet)$-$q$-connectivity}. In what follows, we introduce the concept of \textit{maximal $(\bullet)$-$q$-connectivity}, which is the basis for defining the idea of \textit{maximal directed simplicial $q$-walk}.

\begin{definition}\label{def:maximal-q-connectivity}
Given $\sigma^{(n)}, \tau^{(m)} \in \mathrm{dFl}(G)$, we say that $\sigma^{(n)}$ is \textit{maximal $(\bullet)$-$q$-connected} to $\tau^{(m)}$, where $\bullet \in \{-,+\}$, if there exists a finite number of simplices $\alpha_{i}^{(n_{i})} \in \mathrm{dFl}(G)$, let's put $i=1, \ldots,l$, with $0 \le q \le \min(n,m,n_{1}, \ldots,n_{l})$, such that
\begin{equation}
\sigma^{(n)} \sim_{A_{q_{0}^{*}}}^{\bullet} \alpha_{1}^{(n_{1})} \sim_{A_{q_{1}^{*}}}^{\bullet}  \ldots  \sim_{A_{q_{l-1}^{*}}}^{\bullet}  \alpha_{l}^{(n_{l})} \sim_{A_{q_{l}^{*}}}^{\bullet}  \tau^{(m)},
\end{equation}

\noindent where $q \le q_{j}$, for all $j=0, \ldots,l$, and in this case we denote $\sigma^{(n)} \bm{\sim}_{\bm{A_{q^{*}}}}^{\bullet} \tau^{(m)}$. We say that there is a \textit{directed simplicial $q$-walk} of length $l$ from $\sigma^{(n)}$ to $\tau^{(m)}$ if $\sigma^{(n)} \bm{\sim}_{\bm{A_{q^{*}}}}^{+} \tau^{(m)}$ or from $\tau^{(m)}$ to $\sigma^{(n)}$ if $\sigma^{(n)} \bm{\sim}_{\bm{A_{q^{*}}}}^{-} \tau^{(m)}$. We denote $\sigma^{(n)} \bm{\sim}_{\bm{A_{q^{*}}}}^{\pm} \tau^{(m)}$ if $\sigma^{(n)} \bm{\sim}_{\bm{A_{q^{*}}}}^{+} \tau^{(m)}$ and $\sigma^{(n)} \bm{\sim}_{\bm{A_{q^{*}}}}^{-} \tau^{(m)}$ and we say they are \textit{maximal $(\pm)$-$q$-connected}. Moreover, for $\bullet \in \{-,+\}$, if $\sigma^{(n)}$ is not maximal $(\bullet)$-$q$-connected to $\tau^{(m)}$ for all $\bullet$,  we say that they are \textit{maximal $q$-disconnected}.
\end{definition}

\begin{remark}\label{rem:lower_case}
If we replace the maximal adjacency $A_{q^{*}}$ by the lower adjacency $L_{q}$ throughout Definition~\ref{def:maximal-q-connectivity}, we obtain the analogous notion of \textit{lower $(\bullet)$-$q$-connectivity}. All remaining conventions (directed simplicial $q$-walks, the $(-, +, \pm)$ notations, and $q$-disconnectedness) carry over with $A_{q^{*}}$ replaced by $L_{q}$ and  ``maximal'' replaced by ``lower'' throughout.
\end{remark}

Notice that if $\sigma \bm{\sim}_{\bm{A_{q^{*}}}}^{+} \tau$ (respec. $\bm{L_{q}}$) with $\sigma = \tau$, then we have a \textit{maximal} (respec. \textit{lower}) \textit{directed simplicial $q$-cycle}.

\begin{definition}
Given two directed simplices $\sigma, \tau \in \mathrm{dFl}(G)$, the (\textit{lower}) \textit{maximal directed simplicial $q$-distance} from $\sigma$ to $\tau$, denoted by $\vec{d}_{q}(\sigma, \tau)$, is equal to the length of the shortest (lower) maximal directed simplicial $q$-walk from $\sigma$ to $\tau$. If $\sigma$ and $\tau$ are $q$-disconnected, then we define $\vec{d}_{q}(\sigma, \tau) = \infty$.
\end{definition}

\begin{remark}
When not explicitly stated, we may use the notations $\vec{d}_{q}^{L}(\sigma, \tau)$ and $\vec{d}_{q}^{A}(\sigma, \tau)$ to distinguish between the lower and maximal cases.
\end{remark}

As a matter of fact, $\vec{d}_{q}$ is a quasi-distance (that is, it satisfies the identity property, $\vec{d}_{q}(\sigma, \tau)=0$ $\Leftrightarrow$ $\sigma=\tau$, and the triangular inequality) since the property of symmetry is not necessarily satisfied.

%------------
\subsubsection{Weakly and Strongly $q$-Connected Components and Structure Vectors}

As mentioned, performing a Q-analysis of a simplicial complex involves calculating its $q$-connected components and constructing its structure vector. In analogy to digraphs, however, DFCs feature two different kinds of ``$q$-connected components,'' namely: \textit{weakly $q$-connected components} and \textit{strongly $q$-connected components}.

In analogy to Serrano and Gómez \cite{Serrano2020}, a directed simplex is not maximal $(\bullet)$-$q$-connected to itself $\bullet \in \{-, +, \pm\}$; thus, this relation is not reflexive; accordingly, to obtain an equivalence relation, we introduce the following relation:
\begin{equation}\label{eq:strong-q-connected-rel}
(\sigma^{(n)}, \tau^{(m)}) \in S^{s}_{q} \iff \begin{cases}
\sigma^{(n)} \bm{\sim}_{\bm{A_{q^{*}}}}^{\pm} \tau^{(m)}, \\
\mbox{or } \sigma^{(n)} = \tau^{(m)}.
\end{cases}
\end{equation}

\begin{proposition}\label{prop:strongly-q-connected-comp}
The relation $S^{s}_{q}$ is an equivalence relation on $\mathrm{dFl}_{q}(G)$.
\end{proposition}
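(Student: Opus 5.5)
We check the three axioms of an equivalence relation directly from the definition of $S^{s}_{q}$ in \eqref{eq:strong-q-connected-rel}, using the chain description of maximal $(\bullet)$-$q$-connectivity in Definition~\ref{def:maximal-q-connectivity}. Throughout, elements are taken from $\mathrm{dFl}_{q}(G)$, so that every simplex has dimension at least $q$. This is the condition needed for chains at level $q$ to make sense.

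\emph{Reflexivity} is immediate, since the second clause of \eqref{eq:strong-q-connected-rel} was added precisely for this purpose. For every $\sigma \in \mathrm{dFl}_{q}(G)$ we have $\sigma = \sigma$, so $(\sigma,\sigma) \in S^{s}_{q}$. This holds even though $\sigma$ need not be maximal $(\pm)$-$q$-connected to itself.

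\emph{Symmetry.} Suppose $(\sigma,\tau)\in S^{s}_{q}$. If $\sigma=\tau$ there is nothing to prove. Otherwise $\sigma \bm{\sim}_{\bm{A_{q^{*}}}}^{+} \tau$ and $\sigma \bm{\sim}_{\bm{A_{q^{*}}}}^{-} \tau$. The key observation is the duality noted after Definition~\ref{def:qij-nearness-new}: $\alpha \sim^{-}_{q} \beta$ holds exactly when $\beta \sim^{+}_{q}\alpha$. We first check that this duality passes to the maximal adjacency $\sim^{\bullet}_{A_{q^{*}}}$.

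\begin{itemize}
\item Strict lower adjacency is built from $\sim^{\bullet}_{q}$ together with a negation over all $\star$. That negation is symmetric in its two arguments, because the set $\{-,+,\pm\}$ is closed under swapping $+$ and $-$.
\item Upper $(\star)$-$p$-adjacency obeys the same swap. Exchanging the roles of $\sigma$ and $\tau$ turns the condition $i\ge j$ into $j \ge i$, and the index $p=n+m-q$ is symmetric in $n$ and $m$.
\item Maximality quantifies over all $\star$ and all cofaces, so it is likewise symmetric in the direction label.
\end{itemize}

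Hence $\alpha \sim^{-}_{A_{q^*}}\beta \iff \beta\sim^{+}_{A_{q^*}}\alpha$. Reversing a $(+)$-chain from $\sigma$ to $\tau$ then gives a $(-)$-chain from $\tau$ to $\sigma$ with the same levels $q_j\ge q$, and vice versa. So $\tau \bm{\sim}_{\bm{A_{q^{*}}}}^{\pm}\sigma$.

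I expect this step to be the main obstacle. One has to check carefully that the "for all $\star$" clauses in the definitions of strict and maximal adjacency really are invariant under the swap. That clause fixes $\tau$ as the reference simplex, so the check is needed for each clause separately.

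\emph{Transitivity.} Suppose $(\sigma,\tau),(\tau,\rho)\in S^{s}_{q}$. If either pair is an equality, the conclusion is trivial. Otherwise, for each $\bullet\in\{+,-\}$ we concatenate the $(\bullet)$-chain from $\sigma$ to $\tau$ with the $(\bullet)$-chain from $\tau$ to $\rho$, inserting $\tau$ as an intermediate simplex $\alpha$. The result is a finite chain, every link has level $q_j \ge q$, and all dimensions are at least $q$. Doing this for both $\bullet=+$ and $\bullet=-$ gives $\sigma\bm{\sim}_{\bm{A_{q^{*}}}}^{\pm}\rho$.

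One case needs separate treatment: $\sigma=\rho$ with $\sigma \ne \tau$. Here the concatenated chain is a closed maximal directed simplicial $q$-cycle. We do not need it, because $(\sigma,\sigma)\in S^{s}_{q}$ holds by the equality clause, which is exactly why that clause is included. This completes the verification that $S^{s}_{q}$ is an equivalence relation on $\mathrm{dFl}_{q}(G)$.
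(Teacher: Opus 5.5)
Your reflexivity and transitivity steps are fine and match the paper's proof: the equality clause gives reflexivity, and concatenating chains through $\tau$ gives transitivity, with the case $\sigma=\rho$ absorbed by the equality clause. The gap is in symmetry. There you need, as the paper's one-line ``by definition'' also silently does, that reversing a chain swaps $+$ and $-$, i.e.\ $\alpha\sim^{-}_{A_{q^{*}}}\beta \iff \beta\sim^{+}_{A_{q^{*}}}\alpha$. This equivalence is false for the definitions as written.

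Your third bullet is exactly where it breaks; you flagged this worry but then asserted it away. The maximality clause in $\alpha\sim^{\bullet}_{A_{q^{*}}}\beta$ forbids proper cofaces \emph{of $\alpha$} from being $q$-adjacent to $\beta$. By contrast, $\beta\sim^{-}_{A_{q^{*}}}\alpha$ constrains the cofaces of $\beta$, and invariance under relabelling $\star$ does not help. Concretely, take arcs $(0,1),(1,2),(1,3),(2,3)$ and $q=0$:
\begin{itemize}
\item $[0,1]\sim^{+}_{A_{0^{*}}}[1,2]$: they share only the vertex $1$, via $\hat{d}_0([0,1])$ and $\hat{d}_1([1,2])$; no $2$-simplex contains both; and $[0,1]$ has no proper coface.
\item $[1,2]\subset[1,2,3]$ and $[1,2,3]\sim^{-}_{A_{0}}[0,1]$, via $\hat{d}_1([1,2,3])=[1,3]$ and $\hat{d}_0([0,1])=[1]$, and no $3$-simplex exists.
\end{itemize}
Hence $[1,2]\not\sim^{\bullet}_{A_{0^{*}}}[0,1]$ for every $\bullet$. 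Your first bullet also inherits the paper's nearness duality $\sigma\sim^{-}_{q}\tau \iff \tau\sim^{+}_{q}\sigma$. That duality fails whenever one simplex contains the other, because clause 1 ($\sigma\subseteq\tau$) of Definition~\ref{def:qij-nearness} is one-sided.

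This is not just a missing verification: read literally, $S^{s}_{q}$ need not be symmetric. Take the directed $3$-clique with arcs $(0,1),(0,2),(1,2)$ and $q=1$.
\begin{itemize}
\item Containment gives $[0,1]\sim^{\pm}_{1}[0,1,2]$.
\item They cannot be $2$-near, since $2>\min(1,2)$.
\item They cannot be upper $p$-adjacent, since $p=1+2-1=2$ is not larger than $m=2$.
\item The only proper coface of $[0,1]$ is $[0,1,2]$, which is $2$-near to itself and hence not strictly lower (so not $(\star)$-) $1$-adjacent to $[0,1,2]$.
\end{itemize}
So $[0,1]\sim^{\pm}_{A_{1^{*}}}[0,1,2]$ and $([0,1],[0,1,2])\in S^{s}_{1}$. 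On the other hand, a simplex $x$ of dimension $\ge 1$ with $x\sim^{+}_{1}[0,1]$ must satisfy $x\subseteq[0,1]$, because the faces $\hat{d}_j([0,1])$ are vertices. Thus $x=[0,1]$. So no $(+)$-chain at level $\ge 1$ starting at $[0,1,2]$ can end at $[0,1]$, and $([0,1,2],[0,1])\notin S^{s}_{1}$.

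What is missing is a hypothesis under which reversal is legitimate. Two options:
\begin{itemize}
\item Restrict to $\mathrm{dFl}^{*}_{q}(G)$ and to chains through maximal simplices. There no containments occur and the maximality clause is vacuous (cf.\ Proposition~\ref{prop:maximal-adjacency}), so every clause really is symmetric under $\alpha\leftrightarrow\beta$, $i\leftrightarrow j$. This is the $q$-digraph setting used later in the paper.
\item Define the strong relation as mutual reachability, $\sigma\bm{\sim}^{+}_{\bm{A_{q^{*}}}}\tau$ and $\tau\bm{\sim}^{+}_{\bm{A_{q^{*}}}}\sigma$. Symmetry is then immediate, and your concatenation argument gives transitivity.
\end{itemize}
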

\begin{proof}
By definition, for every $\sigma \in \mathrm{dFl}_{q}(G)$, we have $(\sigma, \sigma) \in S_{q}^{s}$ (reflexivity). For arbitrary $\sigma, \tau, \theta \in \mathrm{dFl}_{q}(G)$, if $(\sigma, \tau) \in S_{q}^{s}$, then, by definition, $(\tau, \sigma) \in S_{q}^{s}$ (symmetry), and if $(\sigma, \tau), (\tau, \theta) \in S_{q}^{s}$, then we can find a finite number of directed simplices in $\mathrm{dFl}_{q}(G)$ such that $\sigma \bm{\sim}_{\bm{A_{q^{*}}}}^{+} \theta$ and $\sigma \bm{\sim}_{\bm{A_{q^{*}}}}^{-} \theta$, i.e. $(\sigma, \theta)  \in S_{q}^{s}$ (transitivity).  
\end{proof}

\begin{definition}\label{def:strongly-q-connected-comp}
The \textit{maximal strongly $q$-connected components} of $\mathrm{dFl}(G)$ are the equivalence classes of the quotient set $\mathrm{dFl}_{q}(G)/ S^{s}_{q}$, which are the equivalence classes of maximal $(\pm)$-$q$-connected directed simplices.
\end{definition}

Furthermore, if we disregard the directionality of the connections between the directed simplices in the relation (\ref{eq:strong-q-connected-rel}), we obtain the maximal $q$-connectivity as defined in \cite{Serrano2020}; thus, denoting by $\sigma^{(n)} \bm{\sim}_{\bm{A_{q^{*}}}} \tau^{(m)}$ the maximal $q$-connectivity between $\sigma^{(n)}$ and $\tau^{(m)}$, we define the following equivalence relation:
\begin{equation}\label{eq:weak-q-connected-rel}
(\sigma^{(n)}, \tau^{(m)}) \in S^{w}_{q} \iff \begin{cases}
\sigma^{(n)} \bm{\sim}_{\bm{A_{q^{*}}}} \tau^{(m)}, \\
\mbox{or } \sigma^{(n)} = \tau^{(m)}.
\end{cases}
\end{equation}

\begin{definition}\label{def:weakly-q-connected-comp}
The \textit{maximal weakly $q$-connected components} of $\mathrm{dFl}(G)$ are the equivalence classes of the quotient set $\mathrm{dFl}_{q}(G)/ S^{w}_{q}$, which are the equivalence classes of maximal $q$-connected directed simplices.
\end{definition}

\begin{remark}
The previous definitions apply equally to the lower ($\pm$)-$q$-adjacency, since it's reflexive by definition.
\end{remark}

In Subsection \ref{sec:q-analysis} we have defined the (first) structure vector based on the number of $q$-connected components of a simplicial complex $\mathcal{X}$, nonetheless, Andjelković et al. \cite{Andjelkovic2015} described two additional different structure vectors, the \textit{second} and the \textit{third} structure vectors, which are based, respectively, on the number of simplices in the set $\mathcal{X}_{q}$ and the ``degree of connectedness'' among the simplices at level $q$. In Definition \ref{def:topological-structure-vectors-1}, we extend these vectors for the directed case.

%------q-DIGRAPHS --------
\subsubsection{Maximal and Lower $q$-Digraphs} 

Since we previously defined $\mathrm{dFl}^{\ast}_{q}(G)$ as the set of all maximal directed simplices with dimensions greater than or equal to $q$, advancing in level $q$, or changing the level of organization, of the complex (see Figure \ref{fig:DFC-level-of-organization}), gives us a a new perspective of its higher-order topology. Consequently, we may learn more about the higher-order topology (or the clique organization) of the underlying network.

\begin{figure}[h!]
\centering
\begin{subfigure}{.23\textwidth}
\centering
\includegraphics[scale=0.42]{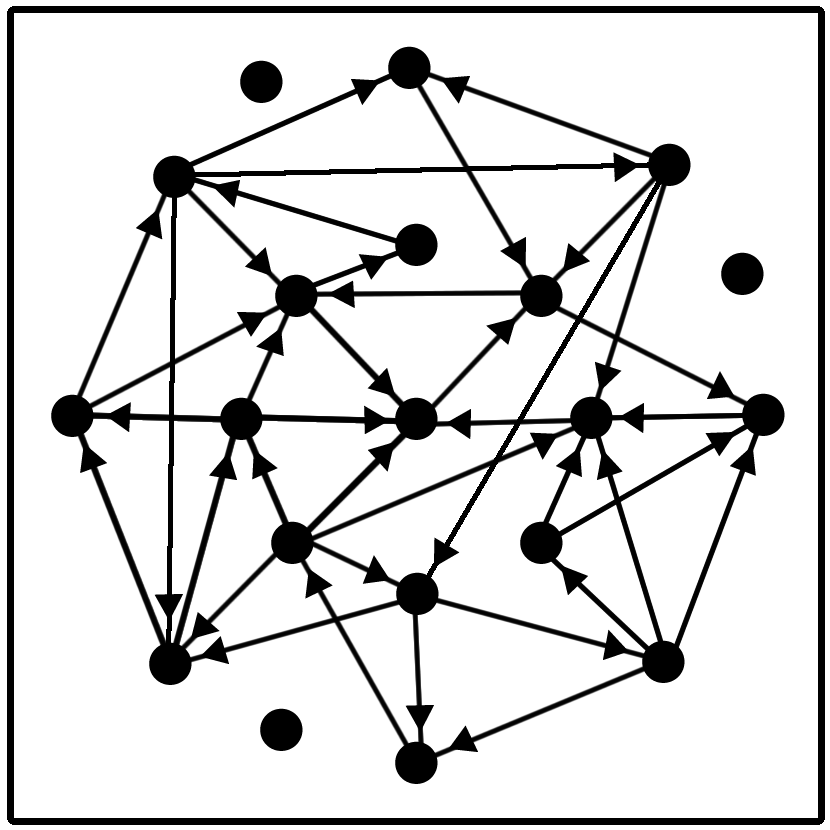}
\caption{Digraph $G$.}
\label{high-adj1}
\end{subfigure}%
\begin{subfigure}{.22\textwidth}
\centering
\includegraphics[scale=0.42]{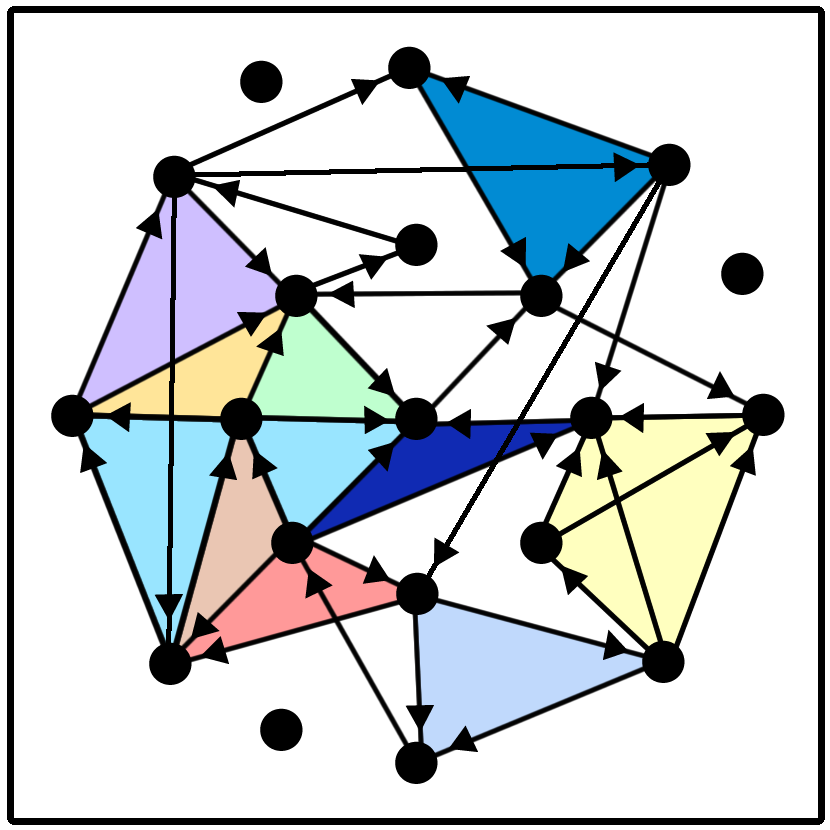}
\caption{$\mathrm{dFl}^{\ast}_{0}(G)$.}
\label{high-adj2}
\end{subfigure}
\begin{subfigure}{.22\textwidth}
\centering
\includegraphics[scale=0.42]{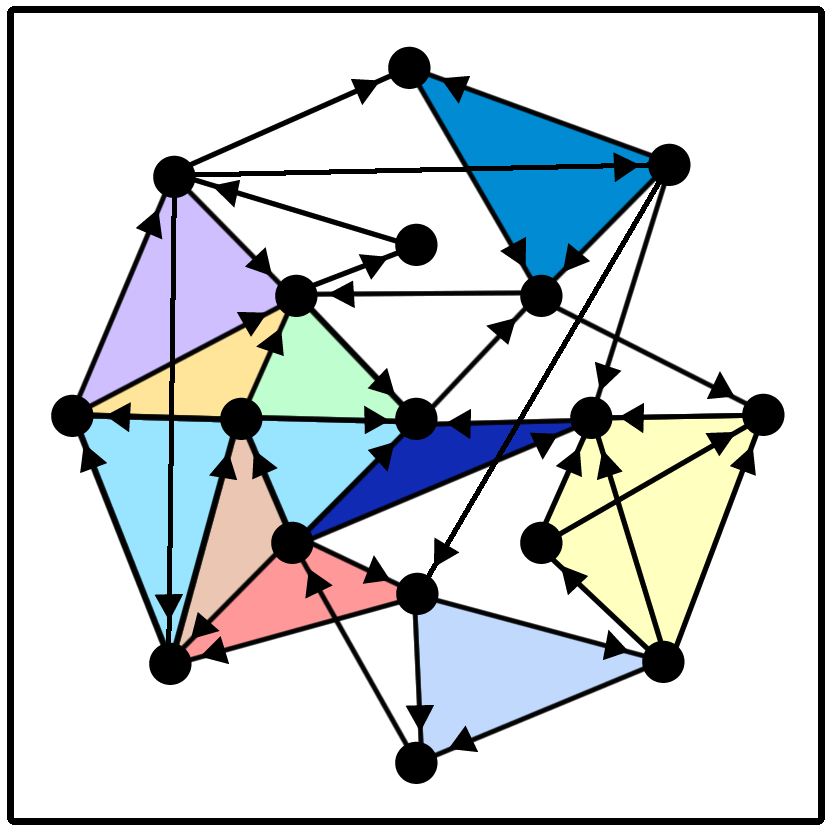}
\caption{$\mathrm{dFl}^{\ast}_{1}(G)$.}
\label{high-adj2}
\end{subfigure}
\begin{subfigure}{.22\textwidth}
\centering
\includegraphics[scale=0.42]{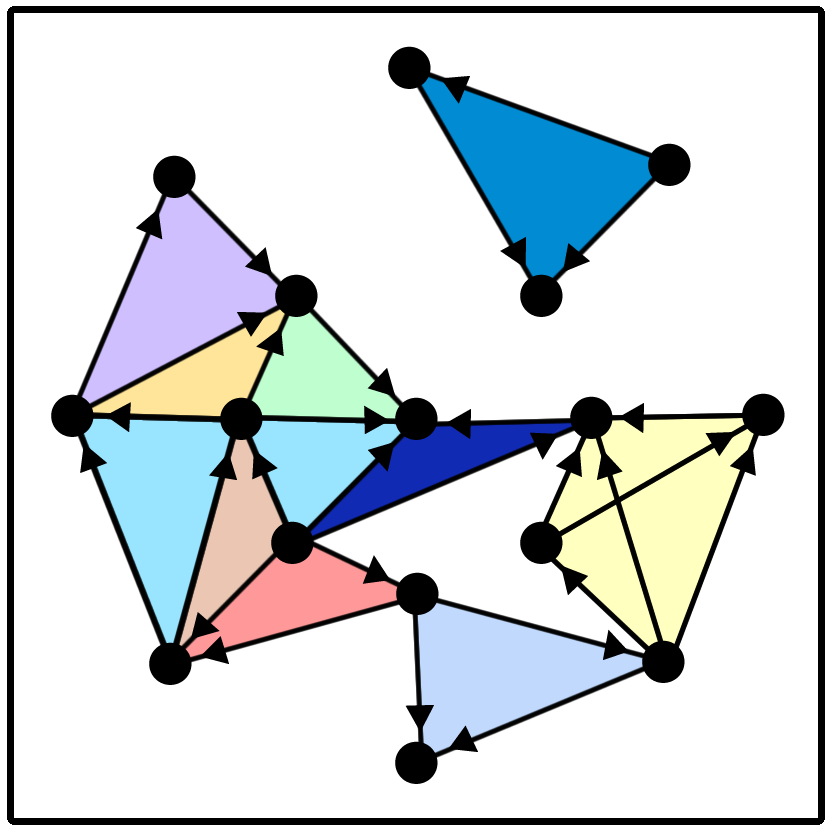}
\caption{$\mathrm{dFl}^{\ast}_{2}(G)$.}
\label{high-adj1}
\end{subfigure}%
\caption{Graphical representation of the maximal directed simplices for each level $q=0,1,2$.}
\label{fig:DFC-level-of-organization}
\end{figure}

Accordingly, we can go further and consider the directed higher-order connectivity among the simplices at each level $q$, and then define a ``higher-order digraph'' for each level. A definition of ``higher-order digraph'' was previously described in \cite{Caputi} using the concept of a \textit{$q$-digraph}; however, since we have introduced different types of adjacencies, we formalize two definitions here: \textit{maximal} and \textit{lower $q$-digraph}.

\begin{definition}\label{def:q-digraph}
The \textit{maximal} (respectively \textit{lower}) \textit{$q$-digraph} of $\mathrm{dFl}(G)$, denoted by $\mathcal{G}_{q}^{A}$ (respectively $\mathcal{G}_{q}^{L}$), is the digraph whose vertices are the simplices of $\mathrm{dFl}^{\ast}_{q}(G)$ and for each pair $\sigma, \tau \in \mathrm{dFl}^{\ast}_{q}(G)$ there is a directed edge from $\sigma$ to $\tau$ if $\sigma \sim^{+}_{A_{q^{*}}} \tau$ (respectively $\sigma \sim^{+}_{L_{q}} \tau$), with $0 \le q \le \dim \mathrm{dFl}(G)$.
\end{definition}

In addition, when $\sigma \sim^{+}_{A_{q^{*}}} \tau$ (respec $\sigma \sim^{+}_{L_{q}} \tau$), for $\sigma, \tau \in \mathrm{dFl}^{\ast}_{q}(G)$, we say that there is a \textit{maximal} (respec. \textit{lower}) \textit{$q$-arc} from $\sigma$ to $\tau$ and in this case we denote $(\sigma, \tau)_{A}$ (respec. $(\sigma, \tau)_{L}$). When the context is clear, we will simply denote $(\sigma, \tau)$ and call it a \textit{$q$-arc}. Also, we may use the notation $\mathcal{G}^{A}_{q} = (\mathcal{V}_{q}, \mathcal{E}^{A}_{q})$ (respec. $\mathcal{G}^{L}_{q} = (\mathcal{V}_{q}, \mathcal{E}^{L}_{q})$), where $\mathcal{V}_{q} = \mathrm{dFl}^{\ast}(G)$ and $\mathcal{E}^{A}_{q}$ (respec. $\mathcal{E}^{L}_{q}$) is the set of all maximal (respec. lower) $q$-arcs $(\sigma, \tau)$.

\begin{definition}\label{def:q-adjacency-matrix}
Let $\mathcal{G}^{A}_{q}$ be the maximal $q$-digraph of $\mathrm{dFl}(G)$. The \textit{maximal $q$-adjacency matrix} of $\mathcal{G}^{A}_{q}$, denoted by $\mathcal{H}^{A}_{q} = \mathcal{H}_{q}^{A}(\mathcal{G}_{q})$, is a real square matrix whose entries are given by
\begin{equation}\label{eq:q-adjacency-matrix}
\big( \mathcal{H}^{A}_{q}\big)_{ij} = \begin{cases}
	1, \mbox{ if } \sigma_{i} \sim^{+}_{A_{q^{*}}} \sigma_{j},\\
	0, \mbox{ if } i = j \mbox{ or } \sigma_{i} \not\sim^{+}_{A_{q^{*}}} \sigma_{j}.
\end{cases}
\end{equation}

Similarly, we define the \textit{lower $q$-adjacency matrix} of $\mathcal{G}^{L}_{q}$ by
\begin{equation}\label{eq:lower-q-adjacency-matrix}
\big( \mathcal{H}^{L}_{q}\big)_{ij} = \begin{cases}
	1, \mbox{ if } \sigma_{i} \sim^{+}_{L_{q}} \sigma_{j},\\
	0, \mbox{ if } i = j \mbox{ or } \sigma_{i} \not\sim^{+}_{L_{q}} \sigma_{j}.
\end{cases}
\end{equation}
\end{definition}

\begin{example}
Consider the DFC presented in Figure \ref{fig:dfc1}. Figures \ref{fig:q-digraph1}, \ref{fig:q-digraph2}, and \ref{fig:q-digraph3} represent its respective lower $q$-digraphs $\mathcal{G}_{q}$ for $q=0,1,2$.
\end{example}

\begin{figure}[h!]
\centering
\begin{subfigure}{.23\textwidth}
\centering
\includegraphics[scale=0.42]{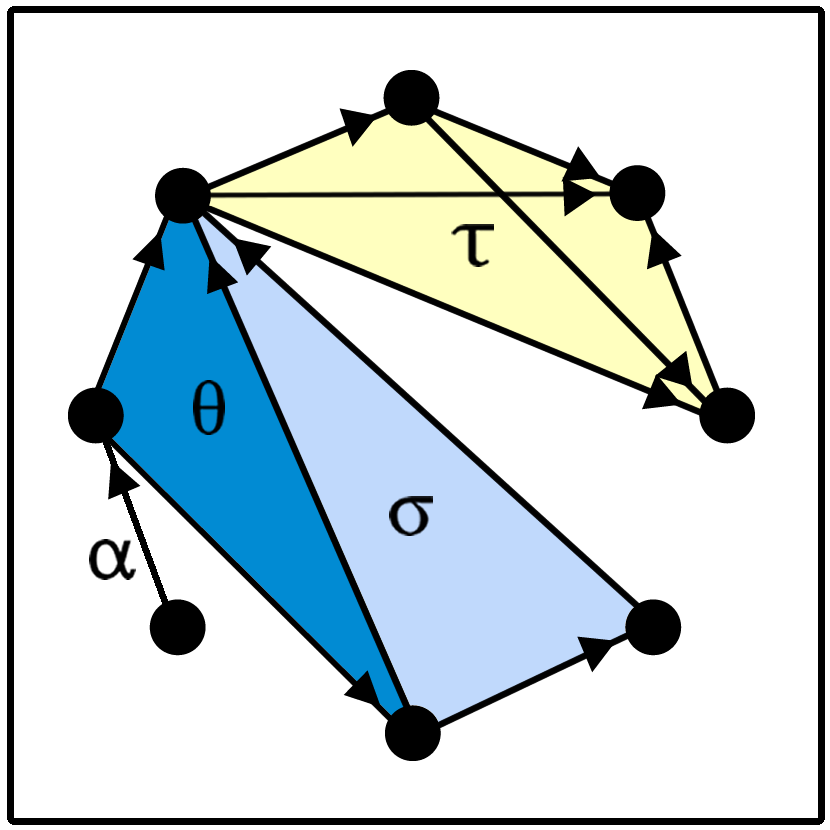}
\caption{$\mathrm{dFl}(G)$.}
\label{fig:dfc1}
\end{subfigure}%
\begin{subfigure}{.22\textwidth}
\centering
\includegraphics[scale=0.42]{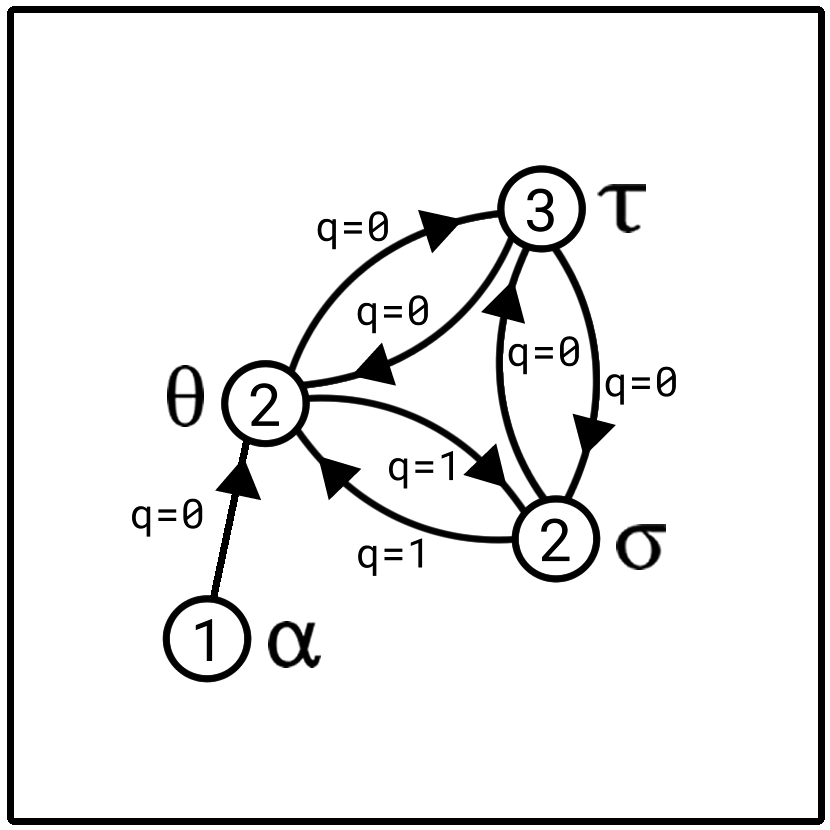}
\caption{$\mathcal{G}^{L}_{0}$.}
\label{fig:q-digraph1}
\end{subfigure}
\begin{subfigure}{.22\textwidth}
\centering
\includegraphics[scale=0.42]{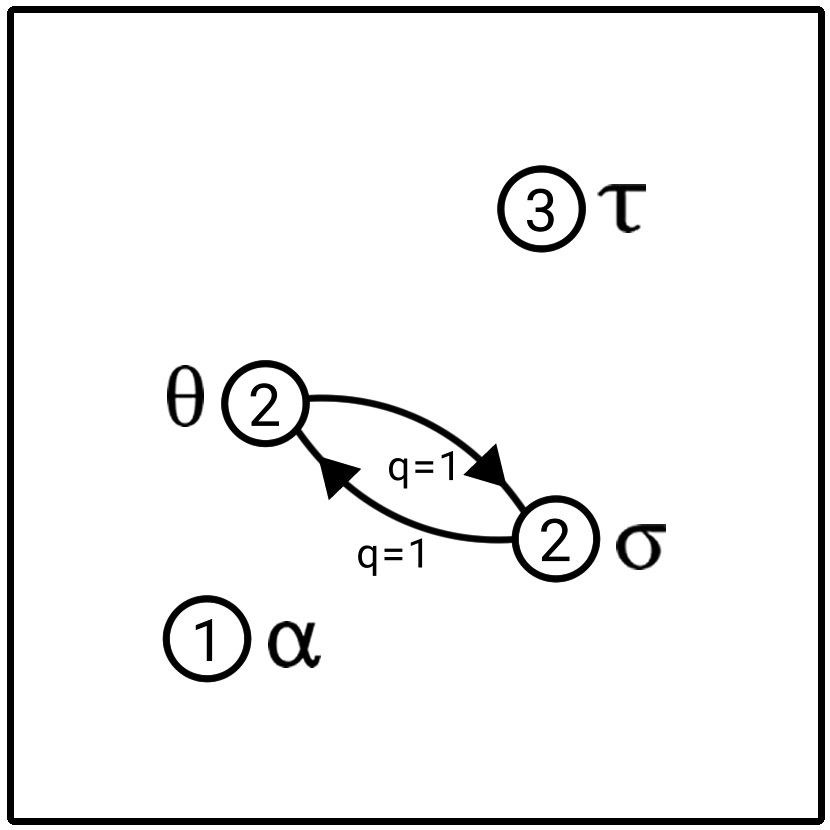}
\caption{$\mathcal{G}^{L}_{1}$.}
\label{fig:q-digraph2}
\end{subfigure}
\begin{subfigure}{.22\textwidth}
\centering
\includegraphics[scale=0.42]{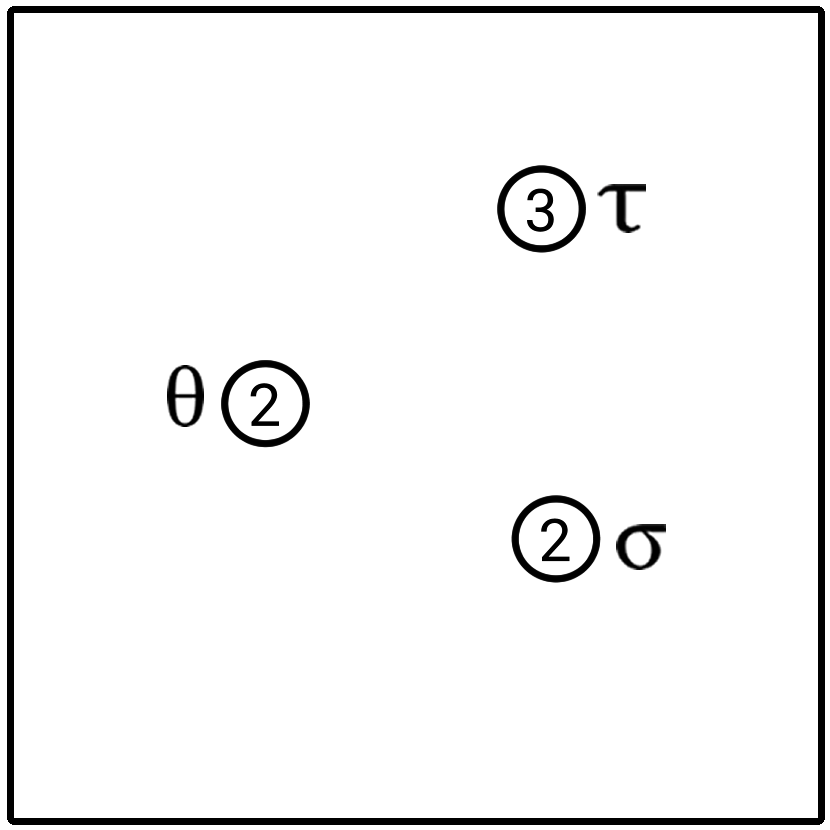}
\caption{$\mathcal{G}^{L}_{2}$.}
\label{fig:q-digraph3}
\end{subfigure}%
\caption{A DFC and its respective lower $q$-digraphs, for $q=0,1,2$. The numbers inside the nodes represent the dimensions of the respective directed simplices.}
\label{fig:q-levels-digraphs}
\end{figure}

Note that, by Proposition \ref{prop:maximal-adjacency}, we can replace the maximal $q$-adjacency in the expression (\ref{eq:q-adjacency-matrix}) with the strictly lower $q$-adjacency. From now on, we will adopt the generic notations $\mathcal{G}_{q} = (\mathcal{V}_{q},\mathcal{E}_{q})$ and $\mathcal{H}_{q} = \mathcal{H}_{q}(\mathcal{G}_{q})$ designating both the lower and maximal variants.

\begin{remark}
The strongly and weakly connected components of a maximal/lower $q$-digraph $\mathcal{G}_{q}$ are equivalent to the maximal/lower strongly and weakly $q$-connected components as defined in Definition \ref{def:strongly-q-connected-comp} and Definition \ref{def:weakly-q-connected-comp}, respectively, where the set $\mathrm{dFl}(G)$ is replaced by $\mathrm{dFl}^{*}(G)$. Moreover, the largest weakly $q$-connected component of $\mathcal{G}_{q}$ is called its \textit{giant $q$-component}.
\end{remark}

In the case where we have a weighted DFC obtained from a weighted digraph, by definition, the corresponding maximal/lower $q$-digraphs are node-weighted digraphs since their nodes represent directed simplices. Accordingly, to obtain arc-weighted digraphs, we need a node-to-arc weight function. In the literature, there is a myriad of methods to transform a node-weighted digraph into an arc-weighted digraph \cite{DelaCruzCabrera}. However, since the relations in a digraph can be non-symmetric, we would like a non-symmetric transformation function; thus, for a given $(\mathrm{dFl}(G), \widetilde{\omega})$ and for a given $q$-arc $(\sigma_{i}, \sigma_{j})$, we consider the following node-to-arc weight function:
\begin{equation}\label{eq:node-to-edge1}
f(\widetilde{\omega}(\sigma_{i}), \widetilde{\omega}(\sigma_{j})) = \widetilde{\omega}(\sigma_{i}).
\end{equation}

This leads us to extend our definition of maximal/lower $q$-digraphs to the weighted case as follows.

\begin{definition}\label{def:weig-q-digraph}
Given a weighted digraph $G^{\omega}$, the \textit{weighted maximal} (respec. \textit{lower}) \textit{$q$-digraph} of $(\mathrm{dFl}(G^{\omega}), $ $\widetilde{\omega})$, denoted by $\mathcal{G}_{q}^{\widetilde{\omega}}$, is the digraph whose vertices are the simplices of $\mathrm{dFl}^{\ast}_{q}(G^{\omega})$ and for each pair $\sigma, \tau \in \mathrm{dFl}^{\ast}_{q}(G^{\omega})$ there is a weighted arc from $\sigma$ to $\tau$ if $\sigma \sim^{+}_{A_{q^{*}}} \tau$ (respec. $\sigma \sim^{+}_{L_{q}} \tau$), with $0 \le q \le \dim \mathrm{dFl}(G^{\omega})$, such that the weight of the arc is given by a node-to-edge weight function.
\end{definition}

In addition, we may use the notation $\mathcal{G}_{q}^{\widetilde{\omega}} = (\mathcal{V}_{q}, \mathcal{E}_{q}, \widetilde{\omega})$, where $\mathcal{V}_{q} = \mathrm{dFl}_{q}^{\ast}(G^{\omega})$, $\mathcal{E}_{q}$ is the set of all $q$-arcs, and $\widetilde{\omega}$ is the product-weight function.

\begin{definition}
Let $\mathcal{G}_{q}^{\widetilde{\omega}}$ be the weighted maximal (respec. lower) $q$-digraph of $(\mathrm{dFl}(G^{\omega}), \widetilde{\omega})$. Let $f$ be some non-symmetric node-to-arc weight function. The \textit{weighted maximal} (respec. \textit{lower}) \textit{$q$-adjacency matrix} of $\mathcal{G}_{q}^{\widetilde{\omega}}$, denoted by $\mathcal{H}^{\widetilde{\omega}}_{q} = \mathcal{H}^{\widetilde{\omega}}_{q}(\mathcal{G}^{\widetilde{\omega}}_{q})$, is a real square matrix whose entries are given by
\begin{equation}\label{eq:weig-q-adjacency-matrix}
\big(\mathcal{H}^{\widetilde{\omega}}_{q}\big)_{ij} = \begin{cases}
	f(\widetilde{\omega}(\sigma_{i}), \widetilde{\omega}(\sigma_{j})), \mbox{ if } \sigma_{i} \sim^{+}_{A_{q^{*}}} \sigma_{j} \mbox{ (respec. } \sigma_{i} \not\sim^{+}_{L_{q}} \sigma_{j}),\\
	0, \mbox{ if } i = j \mbox{ or } \sigma_{i} \not\sim^{+}_{A_{q^{*}}} \sigma_{j} \mbox{ (respec. } \sigma_{i} \not\sim^{+}_{L_{q}} \sigma_{j}).
\end{cases}
\end{equation}
\end{definition}

Furthermore, the directed simplicial $q$-distance between two vertices of a maximal/lower $q$-digraph can be written in terms of the entries of its maximal/lower $q$-adjacency matrix, $\mathcal{H}_{q} = (h_{\sigma \tau})$, as follows. Let $s^{q}_{\sigma \rightarrow \tau}$ be the shortest directed simplicial $q$-walk from a vertex $\sigma$ to a vertex $\tau$. The directed simplicial $q$-distance between $\sigma$ and $\tau$ is 
\begin{equation}\label{eq:simp-weig-distance-entries}
\vec{d}_{q}(\sigma, \tau) = \sum_{\sigma', \tau' \in s^{q}_{\sigma \rightarrow \tau}} h_{\sigma' \tau'}.
\end{equation}

Similarly, we can do the same for the weighted case. Let $f$ be some non-symmetric node-to-edge weight function, let $F$ be a weight-to-distance function, and let $s^{q}_{\sigma \rightarrow \tau}(F)$ be the shortest directed simplicial $q$-walk from $\sigma$ to $\tau$ with respect to $F$. For a weighted maximal/lower $q$-digraph, we can define the \textit{weighted directed simplicial $q$-distance} in terms of the entries of its weighted maximal/lower $q$-adjacency matrix as 
\begin{equation}\label{eq:simp-weig-distance-entries}
\vec{d}_{q}^{\omega}(\sigma, \tau) = \sum_{\sigma', \tau' \in s^{q}_{\sigma \rightarrow \tau}(F)} F\big(f(\widetilde{\omega}(\sigma'), \widetilde{\omega}(\tau')\big).
\end{equation}

%-----------
\subsubsection{Stars, Hubs, and Links}

In this part, we extend the definitions of stars, hubs, and links introduced in Subsection \ref{sec:q-analysis} to directed simplices.

\begin{definition}
Given a directed simplex $\sigma^{(n)} \in \mathrm{dFl}(G)$, for $0 \le q \le n$ and for $\bullet \in \{ -,+,\pm \}$, we define the following $q$-stars associated with $\sigma^{(n)}$:

\begin{enumerate}
\item \textit{Lower $(\bullet)$-$q$-star}:  
$\mathrm{st}_{L_{q}} ^{\bullet}(\sigma^{(n)}) = \{ \tau^{(m)} \in \mathrm{dFl}(G) : \sigma^{(n)}  \sim^{\bullet}_{L_{q}}  \tau^{(m)}  \}.$

\item \textit{Strictly lower $(\bullet)$-$q$-star}:
$\mathrm{st}_{L_{q^{*}}} ^{\bullet}(\sigma^{(n)}) = \{ \tau^{(m)} \in \mathrm{dFl}(G) : \sigma^{(n)}  \sim^{\bullet}_{L_{q^{*}}} \tau^{(m)}  \}.$

\item \textit{Upper $(\bullet)$-$q$-star}: 
$\mathrm{st}_{U_{q}} ^{\bullet}(\sigma^{(n)}) = \{ \tau^{(m)} \in \mathrm{dFl}(G) : \sigma^{(n)}  \sim^{\bullet}_{U_{q}}  \tau^{(m)}  \}.$

\item \textit{Strictly upper $(\bullet)$-$q$-star}:
$\mathrm{st}_{U_{q^{*}}} ^{\bullet}(\sigma^{(n)}) = \{ \tau^{(m)} \in \mathrm{dFl}(G) : \sigma^{(n)}  \sim^{\bullet}_{U_{q^{*}}} \tau^{(m)}  \}.$

\item \textit{$(\bullet)$-$q$-star}:
$\mathrm{st}_{A_{q}} ^{\bullet}(\sigma^{(n)}) = \{ \tau^{(m)} \in \mathrm{dFl}(G) : \sigma^{(n)}  \sim^{\bullet}_{A_{q}}  \tau^{(m)}  \}.$

\item \textit{Maximal $(\bullet)$-$q$-star}:
$\mathrm{st}_{A_{q^{*}}} ^{\bullet}(\sigma^{(n)}) = \{ \tau^{(m)} \in \mathrm{dFl}(G) : \sigma^{(n)}  \sim^{\bullet}_{A_{q^{*}}}  \tau^{(m)}  \}.$
\end{enumerate}
\end{definition}

Notice that $\mathrm{st}^{\pm}(\sigma) = \mathrm{st}^{+}(\sigma) \cap \mathrm{st}^{-}(\sigma)$, for any of the $q$-stars defined above.

\begin{definition}\label{def:dir-hub}
Let $\mathcal{F}(G)$ denote a simplicial family of directed simplices obtained from $\mathrm{dFl}(G)$. The \textit{hub} of $\mathcal{F}(G)$ is the set formed by all directed simplices that are common faces of the elements of $\mathcal{F}(G)$ (regardless of the direction of the connection between them), i.e. $\mathrm{hub}(\mathcal{F}(G)) = \bigcap_{\sigma \in \mathcal{F}(G)} \sigma.$
\end{definition}

We can generalize the notion of in- and out-neighborhood of a node in a digraph to directed simplices by defining \textit{in-} and \textit{out-link} as follows.

\begin{definition}\label{def:dir-link}
The \textit{in-link} and \textit{out-link} of $\sigma^{(n)} \in \mathrm{dFl}(G)$ are defined, respectively, by
\begin{equation}\label{eq:in-link}
\mathrm{lk}^{-}(\sigma^{(n)}) = \{ \tau^{(m)} \in \mathrm{dFl}(G) |\sigma^{(n)} \cap \tau^{(m)} = \emptyset, \sigma^{(n)}\sim_{U_{p}^{-}}  \tau^{(m)}  \},
\end{equation}
\begin{equation}\label{eq:out-link}
\mathrm{lk}^{+}(\sigma^{(n)}) = \{ \tau^{(m)} \in \mathrm{dFl}(G) |\sigma^{(n)} \cap \tau^{(m)} = \emptyset, \sigma^{(n)} \sim_{U_{p}^{+}} \tau^{(m)}  \},
\end{equation}

\noindent where $p=n+m+1$.
\end{definition}

If $\sigma$ is a simplex in the underlying flag complex of $ \mathrm{dFl}(G)$, then $\mathrm{lk}(\sigma) = \mathrm{lk}^{-}(\sigma) \cup \mathrm{lk}^{+}(\sigma) - (\mathrm{lk}^{-}(\sigma) \cap \mathrm{lk}^{+}(\sigma))$.

%------ Degrees --------
\subsubsection{Lower, Upper, and General Degrees}

Building on the previous definitions of $q$-stars, we extend the definitions of lower, upper, and general degrees to directed simplices as follows.

\begin{definition}
Given a directed simplex $\sigma^{(n)} \in \mathrm{dFl}(G)$, for $0 \le q \le n$ and for $\bullet \in \{ -,+,\pm \}$, we define the following $q$-degrees associated with $\sigma^{(n)}$:

\begin{enumerate}
\item \textit{Lower $(\bullet)$-$q$-degree}:
$\deg^{\bullet}_{L_{q}}(\sigma^{(n)}) = |\mathrm{st}_{L_{q}}^{\bullet}(\sigma^{(n)})| = \# \{ \tau^{(m)} \in \mathrm{dFl}(G) : \sigma^{(n)} \sim_{L_{q}}^{\bullet} \tau^{(m)} \}.$

\item \textit{Strictly lower $(\bullet)$-$q$-degree}:
$\deg^{\bullet}_{L_{q^{*}}}(\sigma^{(n)}) = |\mathrm{st}_{L_{q^{*}}}^{\bullet}(\sigma^{(n)})| = \# \{ \tau^{(m)} \in \mathrm{dFl}(G) : \sigma^{(n)} \sim_{L_{q^{*}}}^{\bullet} \tau^{(m)} \}.$

\item \textit{Upper $(\bullet)$-$q$-degree}: 
$\deg^{\bullet}_{U_{q}}(\sigma^{(n)}) = |\mathrm{st}_{U_{q}}^{\bullet}(\sigma^{(n)})| = \# \{ \tau^{(m)} \in \mathrm{dFl}(G) : \sigma^{(n)} \sim_{U_{q}}^{\bullet} \tau^{(m)} \}.$

\item \textit{Strictly upper $(\bullet)$-$q$-degree}:
$\deg^{\bullet}_{U_{q^{*}}}(\sigma^{(n)}) = |\mathrm{st}_{U_{q^{*}}}^{\bullet}(\sigma^{(n)})| = \# \{ \tau^{(m)} \in \mathrm{dFl}(G) : \sigma^{(n)} \sim_{U_{q^{*}}}^{\bullet} \tau^{(m)} \}.$

\item \textit{$(\bullet)$-$q$-degree}:
$\deg^{\bullet}_{A_{q}}(\sigma^{(n)}) = |\mathrm{st}_{A_{q}}^{\bullet}(\sigma^{(n)})| = \# \{ \tau^{(m)} \in \mathrm{dFl}(G) : \sigma^{(n)} \sim_{A_{q}}^{\bullet} \tau^{(m)} \}.$

\item \textit{Maximal $(\bullet)$-$q$-degree}:
$\deg^{\bullet}_{A_{q^{*}}}(\sigma^{(n)}) = |\mathrm{st}_{A_{q^{*}}}^{\bullet}(\sigma^{(n)})| = \# \{ \tau^{(m)} \in \mathrm{dFl}(G) : \sigma^{(n)} \sim_{A_{q^{*}}}^{\bullet} \tau^{(m)} \}.$
\end{enumerate}
\end{definition}

Notice that if $\sigma$ is a simplex in the underlying flag complex of $\mathrm{dFl}(G)$, then its $q$-degree, for any of the lower, upper, and general adjacencies, is equal to $\deg(\sigma) =  \deg^{-}(\sigma) +  \deg^{+}(\sigma) -  \deg^{\pm}(\sigma).$

%\begin{example}
%Considering the DFC depicted in Figure \ref{fig:dfc-example-stars-degree}, we have the following examples: $\deg^{-}_{A_{1^{*}}}(\sigma_{1}) = 1$ and $\deg^{+}_{A_{1^{*}}}(\sigma_{1}) = 0$;  $\deg^{-}_{A_{1^{*}}}(\sigma_{6}) = 2$ and $\deg^{+}_{A_{1^{*}}}(\sigma_{6}) = 2$.
%\end{example}

It's important to note that if we are considering solely the elements of $\mathrm{dFl}_{q}^{\ast}(G)$, i.e., the maximal directed simplices, then the maximal/lower $(\bullet)$-$q$-degrees, $\bullet \in \{ -, + \}$, can be written in terms of the entries of the $q$-adjacency matrix of the maximal/lower $q$-digraph, $\mathcal{H}_{q} = (h_{\sigma\tau})$: $\deg^{+}_{q}(\sigma) = \sum_{\tau \in \mathrm{st}^{+}_{q}(\sigma)} h_{\sigma \tau}$ and $\deg^{-}_{q}(\sigma) = \sum_{\tau \in \mathrm{st}^{-}_{q}(\sigma)} h_{\tau \sigma}$, where the generic notations $\deg^{\bullet}_{q}$ and $\mathrm{st}^{\bullet}_{q}$ designate either the maximal or the lower variants. On the other hand, in the case where we have a weighted DFC $(\mathrm{dFl}(G^{\omega}), \widetilde{\omega})$, for some non-symmetric node-to-arc weight function $f$, the \textit{weighted maximal/lower $(\bullet)$-$q$-degrees} can be written as (considering $\deg^{\omega, \bullet}_{q}$ and $\mathrm{st}^{\bullet}_{q}$ as generic notations for the maximal and lower variants):

\noindent\begin{minipage}{.5\linewidth}
\begin{equation}\label{eq:simp-weig-in-degree}
\deg^{\omega, -}_{q}(\sigma) = \sum_{\tau \in \mathrm{st}^{-}_{q}(\sigma)} f(\widetilde{\omega}(\sigma), \widetilde{\omega}(\tau)),
\end{equation}
\end{minipage}%
\begin{minipage}{.5\linewidth}
\begin{equation}\label{eq:simp-weig-out-degree}
\deg^{\omega, +}_{q}(\sigma) = \sum_{\tau \in \mathrm{st}^{+}_{q}(\sigma)} f(\widetilde{\omega}(\tau), \widetilde{\omega}(\sigma)).
\end{equation}
\end{minipage}

%------------------------------------------------------------
%------------------------------------------------------------
\section{Quantitative Approaches to Directed Flag Complexes}
\label{sec:quantitative}

According to Dehmer et al. \cite{Dehmer-2017}, the ``quantitative graph theory (QGT) deals with the quantification of structural aspects of graphs, instead of characterizing graphs only descriptively.'' QGT forms a novel subfield within graph theory \cite{Dehmer-2017, Dehmer-2014}, filling the gap left by predominantly descriptive classical methods. Moreover, QGT can be broadly divided into two categories \cite{Dehmer-2014}: \textit{graph characterization} and \textit{comparative graph analysis}. Graph characterization is concerned with describing some network property through a local or global numerical graph invariant \cite{Arizmendi, Bavelas, Freeman1978, Wang2006, Watts}. Comparative graph analysis focuses on methods for comparing the structural similarity or distance between two or more graphs. There are two classes of methods of graph similarity comparison \cite{Mheich}: \textit{statistical comparison methods} and \textit{distance-based comparison algorithms}.

Drawing an analogy from QGT, a corresponding field can be proposed for simplicial complexes, termed \textit{quantitative simplicial theory}, which also focuses on quantifying structural aspects rather than solely describing them. This theory can similarly be divided into:

\begin{itemize}
\item \textbf{Simplicial Characterization Measures:} simplicial distances and simplicial eccentricities \cite{Johnson, Serrano2020}; simplicial centralities \cite{Estrada, Serrano2020}; simplicial clustering coefficients \cite{Maletic, Serrano2020}; simplicial entropies \cite{Baccini, Dantchev, Maletic2012}; discrete curvatures \cite{Yamada2023}; simplicial energies \cite{Knill}.

\item \textbf{Simplicial Similarity Comparison Methods:} distances between persistent diagrams \cite{Edelsbrunner}; distances between vectorized persistence summaries \cite{Fasy}; simplicial kernels \cite{Martino, Zhang2020}; distances between structure vectors.
\end{itemize}

Many of these simplicial quantitative methods can be further extended to DFCs, as discussed in subsequent sections.

%-----------------------------------------------------------
%-----------------------------------------------------------
\subsection{Simplicial Characterization Measures}

In this section, we introduce novel simplicial analogs of digraph measures for maximal/lower $q$-digraphs (henceforth, $q$-digraphs) associated with DFCs. We start by presenting distance-based simplicial measures, then simplicial centralities and simplicial segregation measures, and conclude with spectrum-related simplicial measures. It is worth noting that all these measures are based on the directed high-order connectivity of the complex in a specific manner.

\smallskip

\noindent \textbf{Conventions and notations.} Throughout the next subsections, $\mathrm{dFl}(G)$ will denote the DFC of a given simple digraph $G$ \textit{without double edges}, and $\mathcal{G}_{q} = (\mathcal{V}_{q}, \mathcal{E}_{q})$ will denote its (maximal or lower) $q$-digraph with (maximal or lower) $q$-adjacency matrix (henceforth simply referred to as $q$-adjacency matrix) $\mathcal{H}_{q} = (h_{\sigma \tau})$, for $0 \le q \le \dim \mathrm{dFl}(G)$. For each of the measures defined in the following subsections, corresponding maximal and lower variants are obtained by replacing the generic notations as described in Table~\ref{tab:lower-maximal-conventions} with the corresponding maximal or lower notations (e.g., $\mathcal{H}_{q}$ by $\mathcal{H}_{q}^{A}$ or $\mathcal{H}_{q}^{L}$, $\vec{d}_{q}$ by $\vec{d}^{A}_{q}$ or $\vec{d}^{L}_{q}$, and $\deg^{\bullet}_{q}$ by $\deg^{\bullet}_{A_{q^{*}}}$ or $\deg^{\bullet}_{L_{q}}$).

{\renewcommand{\arraystretch}{1.35}
\begin{table}[h!]
\centering
\small
\caption{Notation conventions for maximal and lower variants of the simplicial measures.  All reference tables of simplicial measures in this document use the \emph{generic} column.}
\begin{tabular}{lccc}
\hline
\textbf{Object} & \textbf{Generic notation} & \textbf{Maximal variant} & \textbf{Lower variant}\\
\hline

$q$-Digraph & $\mathcal{G}_{q}$        & $\mathcal{G}_{q}^{A}$        & $\mathcal{G}_{q}^{L}$ \\

$q$-Arc set &  $\mathcal{E}_{q}$        & $\mathcal{E}_{q}^{A}$        & $\mathcal{E}_{q}^{L}$ \\

$q$-Arc & $(\sigma, \tau)$  & $(\sigma, \tau)_{A}$   & $(\sigma, \tau)_{L}$  \\

$q$-Adjacency matrix & $\mathcal{H}_{q}$        & $\mathcal{H}_{q}^{A}$        & $\mathcal{H}_{q}^{L}$ \\

Directed $q$-distance & $\vec{d}_{q}$            & $\vec{d}_{q}^{A}$            & $\vec{d}_{q}^{L}$ \\

($\bullet$)-$q$-Star & $\mathrm{st}^{\bullet}_{q}$ & $\mathrm{st}^{\bullet}_{A_{q^{*}}}$ & $\mathrm{st}^{\bullet}_{L_{q}}$ \\

($\bullet$)-$q$-Degree & $\deg^{\bullet}_{q}$ & $\deg^{\bullet}_{A_{q^{*}}}$ & $\deg^{\bullet}_{L_{q}}$ \\

\hline
\end{tabular}
\label{tab:lower-maximal-conventions}
\end{table}
}

%---------------------------------------------------------
\subsubsection{Distance-Based Simplicial Measures}
\label{sec:simp-distance-measures}

In this part, we extend several \textit{distance-based measures} defined primarily for digraphs to $q$-digraphs. These new simplicial measures can be seen as measures of higher-order global integration of a directed network, that is, how the network is integrated at various levels of organization.

\begin{itemize}
\item The \textbf{average shortest directed simplicial q-walk length} of $\mathcal{G}_{q}$ is defined as the simplicial analog of the directed version of the average shortest path length \cite{Watts}, i.e.
\begin{equation}\label{eq:simp-average-sw}
\vec{\bar{L}}_{q}(\mathcal{G}_{q}) = \frac{1}{|\mathcal{V}_{q}|} \sum_{\sigma \in \mathcal{V}_{q}}\frac{\sum_{\tau \in \mathcal{V}_{q}, \tau \neq \sigma} \vec{d}_{q}(\sigma, \tau)}{|\mathcal{V}_{q}|-1} = \sum_{\substack{\sigma, \tau\in \mathcal{V}_{q} \\ \sigma\neq \tau}} \frac{\vec{d}_{q}(\sigma, \tau)}{|\mathcal{V}_{q}|(|\mathcal{V}_{q}|-1)}.
\end{equation}
For computational purposes, specifically in this case, we may consider $\vec{d}_{q}(\sigma, \tau) = 0$ instead of $\vec{d}_{q}(\sigma, \tau) = \infty$ when $(\sigma, \tau) \not\in \mathcal{E}_{q}$; otherwise, we would have to consider the giant $q$-component and thus replace $|\mathcal{V}_{q}|$ with its order.

\item The \textbf{directed simplicial $q$-eccentricity} of $\sigma \in \mathcal{V}_{q}$, the \textbf{directed simplicial $q$-diameter}, and \textbf{$q$-radius} of $\mathcal{G}_{q}$ (strongly $q$-connected $q$-digraph), are defined respectively as 

\noindent\begin{minipage}{.333\linewidth}
\begin{equation}\label{eq:simp-ecc}
	\vec{\mathrm{ecc}}_{q}(\sigma) = \max_{\tau \in \mathcal{V}_{q}} \vec{d}_{q}(\sigma, \tau),
\end{equation}
\end{minipage}%
\begin{minipage}{.333\linewidth}
\begin{equation}\label{eq:simp-diam}
	\mathrm{diam}(\mathcal{G}_{q}) = \max_{\sigma \in \mathcal{V}_{q}} \vec{\mathrm{ecc}}_{q}(\sigma),
\end{equation}
\end{minipage}
\begin{minipage}{.333\linewidth}
\begin{equation}\label{eq:simp-rad}
	\mathrm{rad}(\mathcal{G}_{q}) = \min_{\sigma \in \mathcal{V}_{q}} \vec{\mathrm{ecc}}_{q}(\sigma).
\end{equation}
\end{minipage}

In the literature, there are different ways to define the eccentricity of a simplex \cite{Atkin1977, Johnson}. Here, however, we extended the definition of simplicial eccentricity, as proposed in \cite{Serrano2020}, to directed simplices.

\item The \textbf{directed simplicial global q-efficiency} of $\mathcal{G}_{q}$ is defined as the simplicial analog of the directed version of the global efficiency as proposed by Latora and Marchiori \cite{Latora}, i.e.
\begin{equation}\label{eq:simp-global-efficiency}
\vec{E}_{glob}^{q}(\mathcal{G}_{q}) = \frac{1}{|\mathcal{V}_{q}|} \sum_{\sigma \in \mathcal{V}_{q}}\frac{\sum_{\tau \in \mathcal{V}_{q}, \tau \neq \sigma} \vec{d}^{-1}_{q}(\sigma, \tau) }{|\mathcal{V}_{q}|-1}.
\end{equation}

\item For two directed simplices $\sigma, \tau \in \mathcal{V}_{q}$, let $(\mathcal{H}_{q}^{k})_{\sigma\tau}$ be the number of directed simplicial $q$-walks of length $k$ from $\sigma$ to $\tau$. The \textbf{simplicial $q$-communicability} between $\sigma$ and $\tau$ is defined as the simplicial analog of the communicability in a digraph \cite{Estrada-2009a, Estrada-2015}, and therefore it can be written in terms of the powers of the $q$-adjacency matrix as
\begin{equation}\label{eq:simp-communicability}
CM_{q}(\sigma, \tau) = \sum^{\infty}_{k=0} \frac{(\mathcal{H}_{q}^{k})_{\sigma\tau}}{k!} = (\exp(\mathcal{H}_{q}))_{\sigma\tau}.
\end{equation}

\item Let $\mathcal{H}_{q}'$ be the $q$-adjacency matrix of the underlying $q$-graph of $\mathcal{G}_{q}$. The \textbf{simplicial $q$-returnability} and \textbf{relative simplicial $q$-returnability} of $\mathcal{G}_{q}$ are defined as the simplicial analogs of the returnability and relative returnability of a digraph \cite{Estrada-2009b, Estrada-2015}, and are given, respectively, by

\noindent\begin{minipage}{.5\linewidth}
\begin{equation}\label{eq:simp-return}
	K_{r,q}(\mathcal{G}_{q}) = \sum^{\infty}_{k=2} \frac{\mathrm{Tr}(\mathcal{H}_{q}^{k})}{k!} = \mathrm{Tr}(\exp(\mathcal{H}_{q})) - |\mathcal{V}_{q}|,
\end{equation}
\end{minipage}%
\begin{minipage}{.5\linewidth}
\begin{equation}\label{eq:simp-rel-return}
	K_{r, q}'(\mathcal{G}_{q}) = \frac{\mathrm{Tr}(\exp(\mathcal{H}_{q})) - |\mathcal{V}_{q}|}{\mathrm{Tr}(\exp(\mathcal{H}_{q}')) - |\mathcal{V}_{q}|}.
\end{equation}
\end{minipage}

\end{itemize}

For a weighted $q$-digraph $\mathcal{G}^{\widetilde{\omega}}_{q}$, the weighted versions of the formulas (\ref{eq:simp-average-sw}) and (\ref{eq:simp-global-efficiency}) are obtained by replacing $\vec{d}_{q}$ with $\vec{d}_{q}^{\omega}$.

%--------------------------------------------------------------
\subsubsection{Simplicial Centrality Measures}
\label{sec:simp-centrality-measures}

In this part, we generalize node centrality measures primarily defined for digraphs, such as in-degree, out-degree, directed closeness, and directed betweenness centrality, to $q$-digraphs. While classical centrality measures quantify a node's capacity to transmit, receive, or mediate information within a network, their simplicial counterparts extend this notion to higher-order structures. Specifically, they evaluate the ``importance,'' ``influence,'' or ``centrality'' of directed cliques across multiple organizational scales of the network.

\begin{itemize}
\item The \textbf{simplicial in/out-$q$-degree centralities} of $\sigma \in \mathcal{V}_{q}$ are defined as the simplicial analogs of the in/out-degree centralities of a node, i.e.,

\noindent\begin{minipage}{.5\linewidth}
\begin{equation}\label{eq:simp-in-degree}
	C_{\deg_{q}}^{-}(\sigma) = \frac{\deg^{-}_{q}(\sigma) }{|\mathcal{V}_{q}| - 1},
\end{equation}
\end{minipage}%
\begin{minipage}{.5\linewidth}
\begin{equation}\label{eq:simp-out-degree}
	C_{\deg_{q}}^{+}(\sigma) = \frac{\deg^{+}_{q}(\sigma) }{|\mathcal{V}_{q}| - 1}.
\end{equation}
\end{minipage}

Also, if $\sigma$ is a simplex in the corresponding underlying $q$-graph, then its $q$-degree centrality can be written as $C_{\deg_{q}}(\sigma) = C_{\deg_{q}}^{-}(\sigma) + C_{\deg_{q}}^{+}(\sigma) - \deg^{\pm}_{q}(\sigma)/(|\mathcal{V}_{q}| - 1).$ To obtain the weighted versions of (\ref{eq:simp-in-degree}) and  (\ref{eq:simp-out-degree}), we simply replace $\deg^{-}_{q}$ with $\deg^{\omega, -}_{q}$ (\ref{eq:simp-weig-in-degree}), and $\deg^{+}_{q}$ with $\deg^{\omega, +}_{q}$ (\ref{eq:simp-weig-out-degree}), respectively.

\item Let $N_{q}$ be the size of the giant $q$-component of $\mathcal{G}_{q}$. The \textbf{directed simplicial $q$-closeness centrality} of $\sigma \in \mathcal{V}_{q}$ and its normalized version are defined as the simplicial analogs of the directed version of the closeness centrality and its normalized version \cite{Bavelas}, and are given, respectively, by

\noindent\begin{minipage}{.5\linewidth}
\begin{equation}\label{eq:simp-closeness}
	\vec{Cl}_{q}(\sigma) = \frac{1}{\sum_{\substack{\tau \in \mathcal{V}_{q} \\ \tau \neq \sigma}} \vec{d}_{q}(\sigma, \tau)},
\end{equation}
\end{minipage}%
\begin{minipage}{.5\linewidth}
\begin{equation}\label{eq:simp-norma-closeness}
	\vec{Cl}_{q}(\sigma) = \frac{N_{q} - 1}{\sum_{\substack{\tau \in \mathcal{V}_{q} \\ \tau \neq \sigma}} \vec{d}_{q}(\sigma, \tau)}.
\end{equation}
\end{minipage}

Notice that (\ref{eq:simp-closeness}) and (\ref{eq:simp-norma-closeness}) are defined for weakly $q$-connected $q$-digraphs.

\item The \textbf{directed simplicial $q$-harmonic centrality} of $\sigma \in \mathcal{V}_{q}$ is defined as the simplicial analog of the directed version of the harmonic centrality \cite{Boldi}, i.e.,
\begin{equation}\label{eq:simp-harmonic}
\vec{HC}_{q}(\sigma) = \sum_{\substack{\tau \in \mathcal{V}_{q} \\ \tau \neq \sigma}} \frac{1}{\vec{d}_{q}(\sigma, \tau)}.
\end{equation}

Unlike (\ref{eq:simp-closeness}) and (\ref{eq:simp-norma-closeness}), (\ref{eq:simp-harmonic}) can be computed for disconnected $q$-digraphs.

\item Considering vertices $\sigma, \tau, \tau' \in \mathcal{V}_{q}$, let $\vec{l}^{q}_{\tau'\tau}(\sigma)$ be the number of shortest directed simplicial $q$-walks from $\tau'$ to $\tau$ passing through $\sigma$, and let $\vec{l}^{q}_{\tau'\tau}$ be the total number of shortest directed simplicial $q$-walks from $\tau'$ to $\tau$. Let $N_{q}$ be the size of the giant $q$-component of $\mathcal{G}_{q}$. The \textbf{directed simplicial $q$-betweenness centrality} of $\sigma \in \mathcal{V}_{q}$ and its normalized version are defined as the simplicial analogs of the directed version of the betweenness centrality and its normalized version \cite{Freeman1978}, and are given, respectively, by

\noindent\begin{minipage}{.5\linewidth}
\begin{equation}\label{eq:simp-betweenness}
	\vec{B}_{q}(\sigma) = \sum_{\substack{\tau, \tau' \in \mathcal{V}_{q} \\ \tau' \neq \tau \neq \sigma}} \frac{\vec{l}^{q}_{\tau'\tau}(\sigma)}{\vec{l}^{q}_{\tau'\tau}},
\end{equation}
\end{minipage}%
\begin{minipage}{.5\linewidth}
\begin{equation}\label{eq:simp-norma-betweenness}
	\vec{B}_{q}(\sigma) = \frac{1}{(N_{q} - 1)(N_{q}-2)} \sum_{\substack{\tau, \tau' \in \mathcal{V}_{q} \\ \tau' \neq \tau \neq \sigma}} \frac{\vec{l}^{q}_{\tau'\tau}(\sigma)}{\vec{l}^{q}_{\tau'\tau}}.
\end{equation}
\end{minipage}

Notice that (\ref{eq:simp-betweenness}) and (\ref{eq:simp-norma-betweenness}) are defined for weakly $q$-connected $q$-digraphs

\item Let $r_{\mathcal{G}_{q}}(\sigma)$ be the number of vertices in $\mathcal{G}_{q}$ which are reachable from $\sigma \in \mathcal{V}_{q}$, and define $C_{R, q}^{max} = \max_{\sigma \in \mathcal{V}_{q}} C_{R, q}(\sigma)$. The \textbf{simplicial local $q$-reaching centrality} of $\sigma \in \mathcal{V}_{q}$ and the \textbf{simplicial global $q$-reaching centrality} of $\mathcal{G}_{q}$ are defined as the simplicial analogs of the local and global reaching centralities \cite{Mones}, and are defined respectively by

\noindent\begin{minipage}{.5\linewidth}
\begin{equation}\label{eq:simp-local-reaching-centrality}
	C_{R, q}(\sigma) = \frac{r_{\mathcal{G}_{q}}(\sigma)}{|\mathcal{V}_{q}|-1},
\end{equation}
\end{minipage}%
\begin{minipage}{.5\linewidth}
\begin{equation}\label{eq:simp-global-reaching-centrality}
	GRC_{q}(\mathcal{G}_{q}) = \frac{\sum_{\sigma \in \mathcal{V}_{q}} [C_{R, q}^{max} - C_{R, q}(\sigma)]}{|\mathcal{V}_{q}|-1}.
\end{equation}
\end{minipage}

\end{itemize}

In addition, for the weighted case, we simply replace $\vec{d}_{q}$ with $\vec{d}_{q}^{\omega}$ in the formulas (\ref{eq:simp-closeness}), (\ref{eq:simp-norma-closeness}), and (\ref{eq:simp-harmonic}); for (\ref{eq:simp-betweenness}) and (\ref{eq:simp-norma-betweenness}), we consider the weighted version $\vec{l}^{\omega, q}_{\tau'\tau}(\sigma)$, where the shortest directed simplicial $q$-walks are computed concerning a weight-to-distance function.

%-------------------------------------------------------------
\subsubsection{Simplicial Segregation Measures}
\label{sec:simp-segregation-measures}

In this part, we extend several measures of segregation, defined initially for digraphs, to $q$-digraphs. These new simplicial measures can be interpreted as attempts to quantify the tendency of directed cliques to segregate into higher-order clusters or communities in a directed network.

\begin{itemize}
\item Considering a vertex $\sigma \in \mathcal{V}_{q}$, let $\deg_{q}^{tot}(\sigma) = \deg_{q}^{-}(\sigma) + \deg_{q}^{+}(\sigma)$ and let $\vec{T}_{q}(\sigma)$ be the number of directed triangles at the level $q$ containing $\sigma$. The \textbf{average directed simplicial $q$-clustering coefficient} of $\mathcal{G}_{q}$ is defined as the simplicial analogue of the average directed $q$-clustering coefficient \cite{Fagiolo}, i.e.
\begin{equation}\label{eq:simp-average-cc}
\vec{\bar{C}}_{q}(\mathcal{G}_{q}) = \frac{1}{|\mathcal{V}_{q}|} \sum_{\sigma \in \mathcal{V}_{q}} \frac{\vec{T}_{q}(\sigma)}{\deg_{q}^{tot}(\sigma)(\deg_{q}^{tot}(\sigma) - 1) - 2\deg_{q}^{\pm}(\sigma)}.
\end{equation}

We notice that $\vec{T}_{q}$ can be written in terms of the $q$-adjacency matrix entries as $\vec{T}(\sigma)  = \frac{1}{2} \sum_{\tau', \tau \in \mathcal{V}_{q}} (h_{\sigma \tau'} + h_{\tau' \sigma})(h_{\sigma \tau} + h_{\tau \sigma})(h_{\tau' \tau} + h_{\tau \tau'}).$

\item Let $f_{\sigma\tau}$ be the dimension of the face shared between two vertices $\sigma^{(n)}$ and $\tau^{(m)}$ in $\mathcal{G}_{q}$. The \textbf{simplicial $(\bullet)$-$q$-clustering coefficient} of $\sigma^{(n)}$, with $\bullet \in \{ -,+,\pm \}$, is defined as the directed variants of the simplicial clustering coefficient introduced by Maletic et al. \cite{Maletic}, i.e.
\begin{equation}\label{eq:in-out-clustering-coef}
\vec{C}_{q}^{\bullet}(\sigma^{(n)}) = \sum_{\tau^{(m)} \in \mathrm{st}^{\bullet}_{q}(\sigma)} \frac{2^{1+f_{\sigma\tau}} - 1}{2^{n} + 2^{m} - 1}.
\end{equation}

\item Let $F^{-}_{>k}(q)$ (respec. $F^{+}_{>k}(q)$) be the number of vertices $\sigma \in \mathcal{V}_{q}$ having $\deg_{q}^{-}(\sigma) > k$ (respec. $\deg_{q}^{+}(\sigma) > k$), and $E^{-}_{>k}(q)$ (respec. $E^{+}_{>k}(q)$) be the number of $q$-arcs connecting those $F^{-}_{>k}(q)$ (respc. $F^{+}_{>k}(q)$) vertices. The \textbf{simplicial in/out-$q$-degree rich-club coefficients} of $\mathcal{G}_{q}$, for an integer $k \ge 0$, are defined as the simplicial analogs of the in/out-degree rich-club coefficients \cite{Smilkov}, i.e.

\begin{minipage}{.5\linewidth}
\begin{equation}\label{eq:simp-in-rich-club}
	\phi_{q, k}^{-}(\mathcal{G}_{q}) = \frac{{E}_{>k}^{-}(q)}{F_{>k}^{-}(q)(F_{>k}^{-}(q) - 1)},
\end{equation}
\end{minipage}%
\begin{minipage}{.5\linewidth}
\begin{equation}\label{eq:simp-out-rich-club}
	\phi_{q, k}^{+}(\mathcal{G}_{q}) = \frac{{E}_{>k}^{+}(q)}{F_{>k}^{+}(q)(F_{>k}^{+}(q) - 1)}.
\end{equation}
\end{minipage}

\item Let $\mathcal{G}_{q}(\sigma)$ be the induced subdigraph of $\mathcal{G}_{q}$ formed by elements of $\mathrm{st}^{-}_{q}(\sigma) \cup \mathrm{st}^{+}_{q}(\sigma) - (\mathrm{st}^{-}_{q}(\sigma) \cap \mathrm{st}^{+}_{q}(\sigma))$, excluding $\sigma$. The \textbf{directed simplicial local $q$-efficiency} of $\mathcal{G}_{q}$ is defined as the simplicial analog of the directed version of the local efficiency \cite{Latora}, i.e.
\begin{equation}\label{eq:simp-local-efficiency}
\vec{E}^{q}_{loc}(\mathcal{G}_{q}) = \frac{1}{|\mathcal{V}_{q}|} \sum_{\sigma \in \mathcal{V}_{q}} \vec{E}^{q}_{glob}(\mathcal{G}_{q}(\sigma)).
\end{equation}

For the weighted case, we replace $\vec{E}^{q}_{glob}$ with its weighted version.

\end{itemize}

%----------------------------------------------------------
%----------------------------------------------------------
\subsubsection{Spectrum-Related Simplicial Measures}
\label{sec:simp-spectrum-measures}

In this last part, we extend spectrum-based measures to DFCs. These simplicial measures are related to the spectra of the $q$-adjacency matrices and can be interpreted as quantifying the network's global ``higher-order structures'' through its ``higher-order spectra.''

\begin{itemize}
\item Let $\{ \varsigma_{k}^{q} \}_{k}$ be the square roots of the eigenvalues (singular values) of the matrix $\mathcal{H}_{q}^{T}\mathcal{H}_{q}$. The \textbf{simplicial $q$-energy} of $\mathcal{G}_{q}$ is defined as the simplicial analogue of the graph energy \cite{Arizmendi}, i.e. 
\begin{equation}\label{eq:simp-energy}
\varepsilon_{q}(\mathcal{G}_{q}) = || ( \mathcal{H}_{q}\mathcal{H}_{q}^{T} )^{1/2}||_{*} = \mathrm{Tr}( (\mathcal{H}_{q}\mathcal{H}_{q}^{T})^{1/2}) = \mathrm{Tr}( (\mathcal{H}_{q}^{T}\mathcal{H}_{q})^{1/2}) = \sum_{k} \varsigma_{k}^{q}.
\end{equation}

\item Consider $\mathcal{H}_{q} \in \mathbb{R}^{n_{q} \times n_{q}}$. Let's denote $|1\rangle = (1, \ldots,1)^{T}$, and let $I_{n_{q}}$ be the $n_{q} \times n_{q}$ identity matrix. The \textbf{simplicial in/out-$q$-Katz centralities} of $\sigma \in \mathcal{V}_{q}$ are defined as the simplicial analogs of the directed versions of the Katz centrality, which consider the incoming/outgoing arcs \cite{Estrada-2011}, respectively, i.e.

\begin{minipage}{.5\linewidth}
\begin{equation}\label{eq:simp-dir-katz-centrality-in}
	K^{-}_{q}(\sigma) = \big[ \langle 1| (I_{n_{q}} - \alpha \mathcal{H}_{q})^{-1}  \big]_{\sigma},
\end{equation}
\end{minipage}%
\begin{minipage}{.5\linewidth}
\begin{equation}\label{eq:simp-dir-katz-centrality-out}
	K^{+}_{q}(\sigma) = \big[ (I_{n_{q}} - \alpha \mathcal{H}_{q})^{-1} |1 \rangle  \big]_{\sigma}.
\end{equation}
\end{minipage}

The subscript $\sigma$ in the brackets represents the position corresponding to $\sigma$ in the vector inside the brackets. The attenuation factor must be $\alpha \neq 1/\lambda_{1}^{q}$ (typically chosen to be $\alpha < 1/\lambda_{1}^{q}$), where $\lambda_{1}^{q}$ is the largest eigenvalue of $\mathcal{H}_{q}$.

\item The \textbf{right/left simplicial $q$-eigenvector centralities} of $\sigma \in \mathcal{V}_{q}$ are defined as the simplicial analogues of the right/left eigenvector centralities \cite{Bonacich, Estrada-2011}, i.e. as the entries corresponding to $\sigma$ of the right/left eigenvectors associated with the largest eigenvalue ($\lambda_{1}^{q}$) of $\mathcal{H}_{q}$:

\begin{minipage}{.5\linewidth}
\begin{equation}\label{eq:right-simp-eigenvector-centrality}
	C_{e, r}^{q}(\sigma) = \Bigg( \frac{1}{\lambda_{1}^{q}} \mathcal{H}_{q} v^{q}_{1} \Bigg)_{\sigma},
\end{equation}
\end{minipage}%
\begin{minipage}{.5\linewidth}
\begin{equation}\label{eq:left-simp-eigenvector-centrality}
	C_{e, l}^{q}(\sigma) = \Bigg( \frac{1}{\lambda_{1}^{q}} \mathcal{H}^{T}_{q} v^{q}_{1} \Bigg)_{\sigma}.
\end{equation}
\end{minipage}

The Perron-Frobenius theorem \cite{Beineke-alg} guarantees that the right and left eigenvectors associated with $\lambda_{1}^{q}$ are non-negative, thus (\ref{eq:right-simp-eigenvector-centrality}) and (\ref{eq:left-simp-eigenvector-centrality}) are non-negative. 

\end{itemize}

%-------------------------------
\subsubsection{Worked Examples and Simulation Studies}

To see examples illustrating the applications of the concepts and simplicial measures developed so far to small digraphs, along with simulations using random digraphs, please refer to the supplementary material accompanying this article.

%=======================================================
\subsection{Simplicial Similarity Comparison Methods}
\label{sec:simp-distances-kernels}

In this section, we introduce two similarity comparison methods for DFCs, namely \textit{structure distances} and \textit{simplicial kernels}, based on structural properties such as the numbers of weakly or strongly $q$-connected components and of directed simplices of specific dimensions.

Throughout this part, all DFCs are considered to be associated with simple digraphs \textit{without double edges}.

%------------
\subsubsection{Structure Vectors and Structure Distances}

In what follows, we define structure vectors for DFCs, analogous to those described in \cite{Andjelkovic2015} for simplicial complexes, each capturing a distinct structural property of the complex.

\begin{definition}\label{def:topological-structure-vectors-1}
Given a DFC $\mathcal{X}$, with $\dim \mathcal{X} = N$, we define the \textit{n-th structure vector}, $\mathrm{Str}_{n}(\mathcal{X})$, associated with $\mathcal{X}$ as following:

\begin{enumerate}
\item $\mathrm{Str}_{1}(\mathcal{X}) = (s^{1}_{0}, \ldots, s^{1}_{N})$, where $s^{1}_{k}$ is the number of directed $k$-simplices in $\mathcal{X}$, $k=0, \ldots,N$.

\item $\mathrm{Str}_{2}(\mathcal{X}) = (s^{2}_{0}, \ldots, s^{2}_{N})$, where $s^{2}_{q}$ is the number of weakly $q$-connected components of $\mathcal{X}$, $q=0, \ldots,N$.

\item $\mathrm{Str}_{3}(\mathcal{X}) = (s^{3}_{0}, \ldots, s^{3}_{N})$, where $s^{3}_{q}$ is the number of strongly $q$-connected components of $\mathcal{X}$, $q=0, \ldots,N$.

\item $\mathrm{Str}_{4}(\mathcal{X}) = (s^{4}_{0}, \ldots, s^{4}_{N})$, where $s^{4}_{q} = 1 - s^{2}_{q}/|\mathcal{V}_q|$, $q=0, \ldots,N$.

\item $\mathrm{Str}_{5}(\mathcal{X}) = (s^{5}_{0}, \ldots, s^{5}_{N})$, where $s^{5}_{q} = 1 - s^{3}_{q}/|\mathcal{V}_q|$, $q=0, \ldots,N$.
\end{enumerate}
\end{definition}

The quantities $s^{4}_{q}$ and $s^{5}_{q}$ can be interpreted as the ``degree of directed connectedness'' among the directed simplices at level $q$.

Building on the previous structure vectors, we now propose a general similarity formula to compare two DFCs.

\begin{definition}
Given two DFCs, $\mathcal{X}_{1}$ and $\mathcal{X}_{2}$, let $\mathrm{Str}_{n}(\mathcal{X}_{1})=(s^{1,n}_{0}, \ldots,s^{1,n}_{N^{n}_{1}})$ and $\mathrm{Str}_{n}(\mathcal{X}_{2})=(s^{2,n}_{0}, \ldots,s^{2,n}_{N^{n}_{2}})$ be their respective $n$-th structure vectors. Without loss of generality, suppose $N_{1}^{n} \ge N_{2}^{n}$. Then consider the new vector $\mathrm{Str}^{\ast}_{n}(\mathcal{X}_{2})=(s^{2,n}_{0}, \ldots,s^{2,n}_{N^{n}_{1}})$, such that $s^{2,n}_{k}=0$ for all $N^{n}_{2} < k \le N^{n}_{1}$. Let $|| \cdot ||_{2}$ denote the Euclidean norm in $\mathbb{R}^{N^{n}_{1}}$. We define the (normalized) \textit{$n$-th structure distance} between $\mathcal{X}_{1}$ and $\mathcal{X}_{2}$ by

\begin{equation}\label{eq:topological-distance}
\widehat{T}^{n}_{sd}(\mathcal{X}_{1}, \mathcal{X}_{2}) = \begin{cases}
	\frac{||\mathrm{Str}_{n}(\mathcal{X}_{1}) - \mathrm{Str}^{\ast}_{n}(\mathcal{X}_{2})||_{2}}{|| \mathrm{Str}_{n}(\mathcal{X}_{1}) ||_{2} + || \mathrm{Str}^{\ast}_{n}(\mathcal{X}_{2}) ||_{2}}, \mbox{ if } \mathcal{X}_{1} \neq \emptyset \mbox{ or } \mathcal{X}_{2} \neq \emptyset,\\
	0, \mbox{ if } \mathcal{X}_{1} = \mathcal{X}_{2} = \emptyset.
\end{cases}
\end{equation}

One can easily verifies that $0 \le \widehat{T}_{sd}^{n} \le 1$. Indeed, since all entries of the vectors $\mathrm{Str}_{n}$ are non-negative, by the triangular inequality of the Euclidean norm, we have
$||\mathrm{Str}_{n}(\mathcal{X}_{1}) - \mathrm{Str}^{\ast}_{n}(\mathcal{X}_{2}) ||_{2} \le ||\mathrm{Str}_{n}(\mathcal{X}_{1})||_{2} + ||\mathrm{Str}^{\ast}_{n}(\mathcal{X}_{2})||_{2}$.

\end{definition}

%---------------------------------------------------------------
\subsubsection{Simplicial Kernels}

Graph kernels are distance-based algorithms that compute a similarity score between two graphs \cite{Borgwardt}. Martino et al. \cite{Martino} proposed four kernels for simplicial complexes, namely: \textit{histogram cosine kernel}, \textit{weighted Jaccard kernel}, \textit{edit kernel}, and \textit{stratified edit kernel}. The first two kernels are based on the count of simplices belonging simultaneously to both simplicial complexes being compared, and the last two kernels are based on the count of edit operations. In what follows, we propose adaptations of these four simplicial kernels to DFCs.

\begin{definition}
Given two DFCs, $\mathcal{X}_{1}$ and $\mathcal{X}_{2}$, let $\mathrm{Str}_{1}(\mathcal{X}_{1})=(s^{1,1}_{0}, \ldots,s^{1,1}_{N^{1}_{1}})$ and $\mathrm{Str}_{1}(\mathcal{X}_{2})=(s^{2,1}_{0}, \ldots,s^{2,1}_{N^{1}_{2}})$ be their respective $1$st structure vectors. Without loss of generality, suppose $N_{1}^{1} \ge N_{2}^{1}$. Then consider the new vector $\mathrm{Str}^{\ast}_{1}(\mathcal{X}_{2})=(s^{2,1}_{0}, \ldots,s^{2,1}_{N^{1}_{1}})$, such that $s^{2,1}_{k}=0$ for all $N^{1}_{2} < k \le N^{1}_{1}$. Let $\langle \cdot, \cdot \rangle$ denote the Euclidean inner product in $\mathbb{R}^{N^{1}_{1}}$. We define the following kernels:

\begin{itemize}
\item The \textbf{(normalized) histogram cosine kernel} (HCK) is defined as
\begin{equation}\label{eq:HCK}
	K_{HC}(\mathcal{X}_{1}, \mathcal{X}_{2}) = \frac{\langle \mathrm{Str}_{1}(\mathcal{X}_{1}), \mathrm{Str}_{1}^{\ast}(\mathcal{X}_{2}) \rangle}{\sqrt{\langle \mathrm{Str}_{1}(\mathcal{X}_{1}), \mathrm{Str}_{1}(\mathcal{X}_{1}) \rangle} \sqrt{\langle \mathrm{Str}_{1}^{\ast}(\mathcal{X}_{2}), \mathrm{Str}_{1}^{\ast}(\mathcal{X}_{2}) \rangle}}.
\end{equation}

\item Let $|\mathcal{X}_{1} \cap \mathcal{X}_{2}|$ be the cardinality of the intersection and $|\mathcal{X}_{1} \cup \mathcal{X}_{2}|$ be the cardinality of the union. The \textbf{Jaccard kernel} is defined as
\begin{equation}\label{eq:jaccard-kernel}
	K_{J}(\mathcal{X}_{1}, \mathcal{X}_{2}) = 
	\begin{cases}
		\frac{|\mathcal{X}_{1} \cap \mathcal{X}_{2}|}{|\mathcal{X}_{1} \cup \mathcal{X}_{2}|}, \mbox{ if } \mathcal{X}_{1} \neq \emptyset \mbox{ or } \mathcal{X}_{2} \neq \emptyset,\\
		0, \mbox{ if } \mathcal{X}_{1} = \mathcal{X}_{2} = \emptyset.
	\end{cases}
\end{equation}

\noindent This kernel is a normalized similarity measure. In fact, since $|\mathcal{X}_{1} \cap \mathcal{X}_{2}| \le |\mathcal{X}_{1} \cup \mathcal{X}_{2}|$, we have $0 \le K_{J} \le 1$, and $K_{J} = 1$ when $\mathcal{X}_{1} = \mathcal{X}_{2}$ and $K_{J} = 0$ when $\mathcal{X}_{1} \cap \mathcal{X}_{2} = \emptyset$.

\item Let $e(\mathcal{X}_{1}, \mathcal{X}_{2})$ be an edit distance between the DFCs (i.e., a distance based on the number of changes necessary to convert one into the other \cite{Gao}). The \textbf{(normalized) edit kernel} is defined as
\begin{equation}\label{eq:edit-kernel1}
	K_{E}(\mathcal{X}_{1}, \mathcal{X}_{2}) = 
	\begin{cases}
		1 - \frac{ 2e(\mathcal{X}_{1}, \mathcal{X}_{2}) }{ |\mathcal{X}_{1}| + |\mathcal{X}_{2}| + e(\mathcal{X}_{1}, \mathcal{X}_{2}) }, \mbox{ if } \mathcal{X}_{1} \neq \emptyset \mbox{ or } \mathcal{X}_{2} \neq \emptyset,\\
		0, \mbox{ if } \mathcal{X}_{1} = \mathcal{X}_{2} = \emptyset.
	\end{cases}
\end{equation}

\item Let $\mathcal{D}$ denote the set of all different dimensions of the simplices present in $\mathcal{X}_{1}$ and $\mathcal{X}_{2}$. Let $\mathcal{X}_{1}^{k} \subseteq \mathcal{X}_{1}$ and $\mathcal{X}_{2}^{k} \subseteq \mathcal{X}_{2}$ be the subsets of all directed $k$-simplices in the respective complexes, and let $K_{E}$ be the edit kernel (\ref{eq:edit-kernel1}). The \textbf{stratified edit kernel} (SEK) is defined as
\begin{equation}\label{eq:stratified-edit-kernel}
	K_{SE}(\mathcal{X}_{1}, \mathcal{X}_{2}) = \frac{1}{|\mathcal{D}|}\sum_{k \in \mathcal{D}}  K_{E}(\mathcal{X}_{1}^{k}, \mathcal{X}_{2}^{k}).
\end{equation}

\end{itemize}

\end{definition}

%%%%%%%%%%%%%%%%%%%%%%%%%%%%%%%%%%%%%%%%%%
\section{Application to Biological Data}
\label{sec:bio_appli}

We chose three analysis measures (lower variants): (a) \textit{(directed simplicial) $q$-harmonic centrality} (HC) (\ref{eq:simp-harmonic}), (b) \textit{(directed simplicial) $q$-betweenness centrality} (BC) (\ref{eq:simp-betweenness}), and (c) \textit{(simplicial) in-$q$-Katz centrality} (KC) (\ref{eq:simp-dir-katz-centrality-in}) and applied them to a subnetwork of neurons within the rostral ganglia of \textit{C. elegans}, consisting of 131 neurons (nodes) and 764 directed connections (arcs) \cite{Kaiser}, where we removed all double edges in such a way that all directed cliques were preserved. This led to a digraph with 687 arcs. The dataset is available from the Stanford Network Analysis Project (SNAP) webpage\footnote{\url{https://snap.stanford.edu/data/C-elegans-frontal.html}}. The choice of the \textit{C. elegans} nematode neuronal network was dictated by its relatively small size, and because it has been fully worked out \cite{Varshney}.

%------------------------------------------------------
\subsection{Methodology}

We computed the associated (lower) $q$-digraph for $q=0,1,2,3$ (Figure~\ref{fig:Networks}), focusing on the network without double edges, and examined the maximum values of the measures across all nodes.

For structural comparison we performed 30 simulations of two types of random digraphs representing distinct null models with identical characteristics as to number of nodes, edges, and same total degree (in-degree plus out-degree) sequence: the first one constructed according to the the Maslov-Sneppen rewiring model  \cite{Maslov} and the second one according to the lattice rewiring model \cite{Fornito-2016} and employed a z-test (at a significance level $\alpha = 0.05$) as the benchmark.

For $q$-digraph construction and simplicial measures computations, we employed the \texttt{DigplexQ} Python package\footnote{\url{https://github.com/heitorbaldo/DigplexQ}}.

\begin{figure}[h!]
\centering
\includegraphics[scale=0.47]{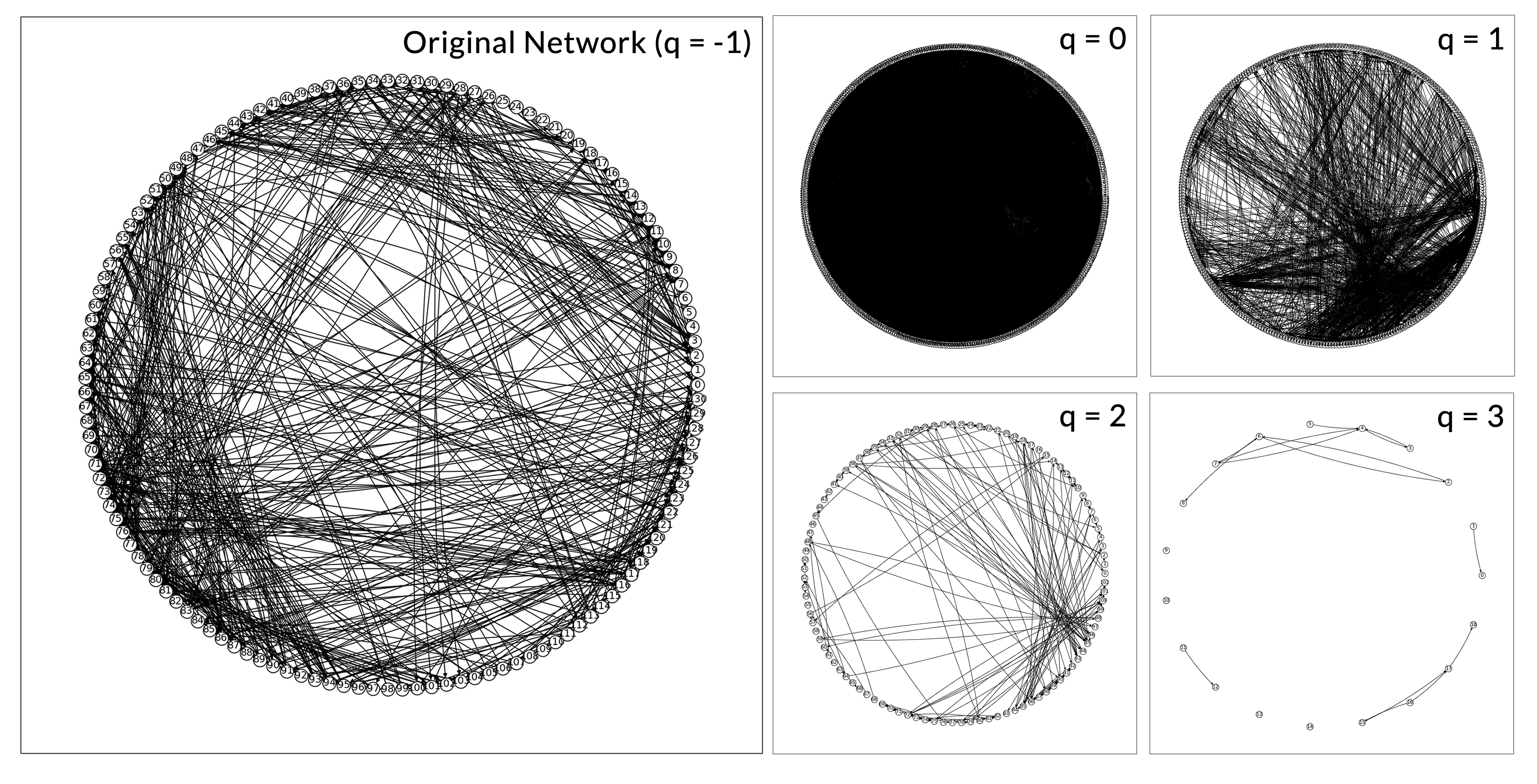}
\caption{\textit{C. elegans} frontal neuronal network and its $q$-digraphs ($q=0,1,2,3$). Here, we adopt the nomenclature ($-1$)-digraph (i.e., $q=-1$) to denote the original network.}
\label{fig:Networks}
\end{figure}

%------------------------------------------------------
\subsection{Results and Discussion}

The results are summarized in Figure \ref{fig:results} with the associated z-test results in Table \ref{Table2}. The nomenclature ($-1$)-digraphs (or $q=-1$) denotes the original networks (at the node level).

\begin{figure}[h!]\label{montage-eeg}
\centering
\includegraphics[scale=1.2]{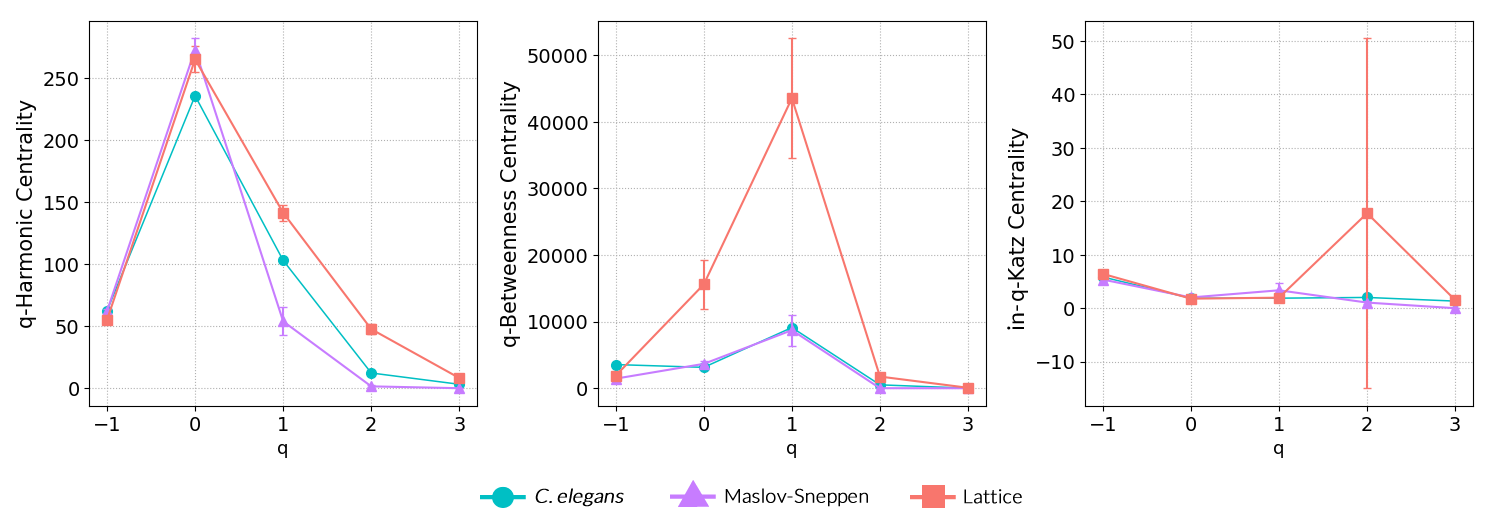}
\caption{Results of the three local measures for each level $q=-1,0,1,2,3$ obtained for the \textit{C. elegans} network and for the two null models (means and standard deviations for the null models).}
\label{fig:results}
\end{figure}

%{ \renewcommand{\arraystretch}{2}
\begin{table}[h!]
\footnotesize
\centering
\caption{Z-scores for each null model at each level $q=-1,0,1,2,3$. Z-scores associated with p-values $p < 0.05$ are highlighted in bold.}
\begin{tabular}{c c c c c c c c c c c}
	\toprule % <-- Toprule here
	\multicolumn{1}{c}{} & \multicolumn{5}{c}{\textbf{q (Maslov-Sneppen) }} & \multicolumn{5}{c}{\textbf{q (Lattice) }} \\
	\cmidrule(rl){2-6} \cmidrule(rl){7-11} \textbf{Measure} & \textbf{-1} & \textbf{0} & \textbf{1} & \textbf{2} & \textbf{3}  & \textbf{-1} & \textbf{0} & \textbf{1} & \textbf{2} & \textbf{3}\\
	\midrule % <-- Midrule here
	HC & 0.18& \textbf{-3.67}& \textbf{3.97}&  \textbf{13.79}& -& \textbf{3.75}& \textbf{-2.51}& \textbf{-5.22}& \textbf{-7.38}& -1.59\\
	BC & \textbf{19.65}& -1.23& 0.15&  \textbf{467.48}& - & \textbf{11.84}& \textbf{-3.04}& \textbf{-3.45}& \textbf{-2.02}& -0.86\\
	KC &  0.98& \textbf{-2.34}& -1.01&  \textbf{2.45}& - & -0.88& \textbf{2.34}& -1.56& -0.43& -0.91 \\
	
	\bottomrule % <-- Bottomrule here
\end{tabular}
\label{Table2}
\end{table}
%}

The following comments apply: 

\begin{itemize}

\item \textbf{$q$-Harmonic centrality (HC):} The \textit{C. elegans} network was significantly different from the MS model at levels $q=0,1,2$, but not at level $q=-1$, whereas it was statistically different from the lattice model at all levels except at $q=3$. 

\item \textbf{$q$-Betweenness centrality (BC):} There was a statistically significant difference between the \textit{C. elegans} network and the MS model at levels $q=-1,2$, but not at levels $q=0,1$, whereas it was statistically different at all levels except at level $q=3$ in the case of the lattice model.  

\item \textbf{in-$q$-Katz centrality (KC):} The \textit{C. elegans} network was statistically different from the MS model at levels $q=0,2$, but not at levels $q=-1,1$, whereas it was statistically different from the lattice model only at level $q=0$.

\end{itemize}

These results show that the \textit{C. elegans} network approximates neither of these two random models in terms of topological organization levels. More importantly, for both random models, some of the usual graph measures were unable to detect significant differences, suggesting that common topological analysis at the node level ($q=-1$) is not sufficient to reveal nuances in the network's topological organization and demands the present $q$-digraph formalism and associated quantitative approaches to reveal topological differences that would otherwise go unnoticed using usual graph analysis approaches.

%%%%%%%%%%%%%%%%%%%%%%%%%%%%%%%%%%%%%%%%%%
\section{Conclusions}
\label{sec:conclu}

Here, we developed a rigorous mathematical formalism for topology research on digraphs based on their directed clique complexes and their directed higher-order connectivity, within the framework of a more complete directed Q-analysis and quantitative techniques based on complexes and on the concept of maximal/lower $q$-digraph (including the weighted case), namely, through simplicial characterization measures and simplicial similarity comparison methods. We also demonstrated this via the simple \textit{C. elegans} frontal neuronal network example, the lower $q$-digraph formalism and the associated quantitative methods can provide a more detailed analysis of digraph topology, thereby revealing properties beyond those available from standard graph-theoretic techniques.

Given this work's focus on establishing the pertinent mathematics, many of the simplicial characterization measures (both maximal and lower variants) and simplicial similarity comparison methods must still be applied to other real-world data. Also, comparisons between the weighted and unweighted versions of these measures were left for future work.

%%%%%%%%%%%%%%%%%%%%%%%%%%%%%%%%%%%%%%%%%%
\vspace{6pt} 

%%%%%%%%%%%%%%%%%%%%%%%%%%%%%%%%%%%%%%%%%%

\noindent \textbf{Acknowledgments:}
This work has been supported by the JSPS KAKENHI grant number JP25K15019, FAPESP grants 2013/07699-0, 2023/18337-3, 2024/03261-4, and 2024/09195-3, CNPq grants 306811/2022-7, 402309/2024-3, and 443972/2024-9, CAPES (finance code 001), the Alexander von Humboldt Foundation, the MEXT Cooperative Research Project Program, the Medical Research Center Initiative for High-Depth Omics and CURE: JPMXP1323015486 for MIB, and the Pan-Omics Data-Driven Innovation Research Center, Kyushu University. The infrastructure of the Omics Science Center Secure Information Analysis System, Medical Institute of Bioregulation at Kyushu University, provides (part of) the computational resource.

\vspace{6pt}

%%%%%%%%%%%%%%%%%%%%%%%%%%%%%%%%%%%%%%%%%%%%%
%%%%%%%%%%%%%%%%%%%%%%%%%%%%%%%%%%%%%%%%%%%%%

%\bibliographystyle{apa}
%\bibliographystyle{apalike}
%\bibliographystyle{abbrv}
{\footnotesize
\bibliographystyle{acm}
\bibliography{bibliography}
}

%\includepdf[pages=-]{img/supplementary_material.pdf}

%=====================================================================
%=====================================================================
\newpage
\clearpage
\setcounter{section}{0}
\setcounter{table}{0}
\setcounter{figure}{0}
\setcounter{page}{1}

\begin{center}

\begin{spacing}{1.8}
{\LARGE Supplementary Material for ``Directed Q-Analysis and Directed Higher-Order Connectivity on Digraphs: A Quantitative Approach"}
\end{spacing}

\bigskip

\author{
Heitor Baldo$^{1,2}$,
Luiz A. Baccal\'a$^{3}$,
Andr\'e Fujita$^{1,4}$,
and Koichi Sameshima$^{5}$\\[1em]
\textit{
$^{1}$Department of Computer Science, Institute of Mathematics, Statistics, and Computer Science, University of S\~ao Paulo, Brazil\\
$^{2}$Institute for Globally Distributed Open Research and Education (IGDORE),
Sweden\\
$^{3}$Department of Telecommunications and Control Engineering,
Escola Polit\'ecnica, University of S\~ao Paulo, Brazil\\
$^{4}$Division of Network AI Statistics, Medical Institute of Bioregulation,
Kyushu University, Japan\\
$^{5}$Department of Radiology and Oncology, School of Medicine,
University of S\~ao Paulo, Brazil
}
}
\end{center}

\bigskip

\begin{abstract}
Supplementary material for ``Directed Q-Analysis and Directed Higher-Order Connectivity on Digraphs: A Quantitative Approach" includes reference tables, worked examples, and simulation studies.
\end{abstract}

\bigskip

%---------------------------------------------------------
%---------------------------------------------------------
\section{Conventions: Maximal and Lower $q$-Digraph Variants}

Throughout this text, $\mathrm{dFl}(G)$ denotes the directed flag complex of a simple digraph $G$, and $\mathcal{G}_q =(\mathcal{V}_q, \mathcal{E}_q)$ denotes its (maximal or lower) $q$-digraph with $q$-adjacency matrix $\mathcal{H}_q = (h_{\sigma\tau})$, for $0 \le q \le \dim \mathrm{dFl}(G)$. For each of the measures defined in Section 3, corresponding \emph{maximal} and \emph{lower} variants are obtained by replacing the generic notations as described in Table~\ref{tab:tablesup1} with the corresponding maximal or lower notations (e.g., $\mathcal{H}_{q}$ by $\mathcal{H}_{q}^{A}$ or $\mathcal{H}_{q}^{L}$, $\vec{d}_{q}$ by $\vec{d}^{A}_{q}$ or $\vec{d}^{L}_{q}$, and $\deg^{\bullet}_{q}$ by $\deg^{\bullet}_{A_{q^{*}}}$ or $\deg^{\bullet}_{L_{q}}$).

{\renewcommand{\arraystretch}{1.35}
\begin{table}[h!]
\centering
\small
\caption{Notation conventions for maximal and lower variants of the simplicial measures.  All reference tables of simplicial measures in this document use the \emph{generic} column.}
\begin{tabular}{lccc}
\hline
\textbf{Object} & \textbf{Generic notation} & \textbf{Maximal variant} & \textbf{Lower variant}\\
\hline

$q$-Digraph & $\mathcal{G}_{q}$        & $\mathcal{G}_{q}^{A}$        & $\mathcal{G}_{q}^{L}$ \\

$q$-Arc set &  $\mathcal{E}_{q}$        & $\mathcal{E}_{q}^{A}$        & $\mathcal{E}_{q}^{L}$ \\

$q$-Arc & $(\sigma, \tau)$  & $(\sigma, \tau)_{A}$   & $(\sigma, \tau)_{L}$  \\

$q$-Adjacency matrix & $\mathcal{H}_{q}$        & $\mathcal{H}_{q}^{A}$        & $\mathcal{H}_{q}^{L}$ \\

Directed $q$-distance & $\vec{d}_{q}$            & $\vec{d}_{q}^{A}$            & $\vec{d}_{q}^{L}$ \\

($\bullet$)-$q$-Star & $\mathrm{st}^{\bullet}_{q}$ & $\mathrm{st}^{\bullet}_{A_{q^{*}}}$ & $\mathrm{st}^{\bullet}_{L_{q}}$ \\

($\bullet$)-$q$-Degree & $\deg^{\bullet}_{q}$ & $\deg^{\bullet}_{A_{q^{*}}}$ & $\deg^{\bullet}_{L_{q}}$ \\

\hline
\end{tabular}
\label{tab:tablesup1}
\end{table}
}

\noindent The worked examples (Section 4) and the simulation studies (Section 5) use the \textit{maximal} variant throughout (i.e. $\mathcal{G}_{q}^{A}$, $\mathcal{H}_{q}^{A}$, $\vec{d}_{q}^{A}$, $\deg^\bullet_{A_{q^*}}$).

%---------------------------------------------------------
%---------------------------------------------------------
\section{Reference Tables of Symbols}
\label{sec:symbols}

%-----------
\subsection{Graph and Digraph Notations}

{\renewcommand{\arraystretch}{1.35}
\begin{longtable}{>{\raggedright}p{2.4cm} p{6.0cm} p{6.0cm}  }
\caption{Graph and digraph notation.}
\label{tab:table1}\\
\toprule
\textbf{Symbol} & \textbf{Meaning} & \textbf{Notes / References} \\
\midrule % <-- Midrule here

$G=(V,E)$ & Graph or digraph & $V$ = vertex set, $E$ = edge/arc set\\

\rowcolor{lightgray}
$|V|$ & Number of vertices (order) & Also written $n$\\

$|E|$ & Number of edges / arcs & Also written $m$\\

\rowcolor{lightgray}
$(v,u) \in E$ & Arc from $v$ to $u$ & Ordered pair; $(v,u)\ne(u,v)$\\

$\mathcal{N}(v)$ & Neighbourhood of $v$ & $\{u:(v,u)\in E\}$\\

\rowcolor{lightgray}
$\mathcal{N}^-(v)$ & In-neighbourhood & $\{u:(u,v)\in E\}$\\

$\mathcal{N}^+(v)$ & Out-neighbourhood & $\{u:(v,u)\in E\}$\\

\rowcolor{lightgray}
$\deg(v)$ & Degree of $v$ & $|\mathcal{N}(v)|$\\

$\deg^-(v)$ & In-degree of $v$ & $|\mathcal{N}^-(v)|$\\

\rowcolor{lightgray}
$\deg^+(v)$ & Out-degree of $v$ &  $|\mathcal{N}^+(v)|$\\

$G^\omega = (V,E,\omega)$ & Weighted digraph & $\omega:V\times V\to\mathbb{R}_{\ge0}$\\

\rowcolor{lightgray}
$\deg^\omega(v)$ & Weighted degree & $\sum_{u\in\mathcal{N}(v)}\omega(v,u)$\\

$\deg^-_\omega(v)$ & Weighted in-degree & $\sum_{u\in\mathcal{N}^-(v)}\omega(u,v)$\\

\rowcolor{lightgray}
$\deg^+_\omega(v)$ & Weighted out-degree & $\sum_{u\in\mathcal{N}^+(v)}\omega(v,u)$\\

$A = (a_{ij})$ & Adjacency matrix & $a_{ij} = 1$ iff $(i,j)\in E$\\

\rowcolor{lightgray}
$(k+1)$-clique & Complete induced subgraph & $k+1$ mutually adjacent vertices\\

\bottomrule % <-- Bottomrule here

\end{longtable}
}

%-----------
\subsection{Simplicial Complex and Directed Flag Complex Notations}

{\renewcommand{\arraystretch}{1.35}
\begin{longtable}{>{\raggedright}p{2.4cm} p{6.0cm} p{6.0cm}  }
\caption{Simplicial complex and directed flag complex notation.}
\label{tab:table2}\\
\toprule
\textbf{Symbol} & \textbf{Meaning} & \textbf{Notes / References} \\
\midrule % <-- Midrule here

$\mathcal{X}$ & Abstract simplicial complex (ASC) & Closed under inclusion\\

\rowcolor{lightgray}
$\sigma^{(n)}$ & $n$-simplex & $|\sigma|=n+1$; dimension $=n$\\

$\dim\sigma$ & Dimension of $\sigma$ & $|\sigma|-1$\\

\rowcolor{lightgray}
$\dim\mathcal{X}$ & Dimension of $\mathcal{X}$ & $\max_{\sigma\in\mathcal{X}}\dim\sigma$\\

$\mathcal{X}_k$ & Set of all $k$-simplices in $\mathcal{X}$ & $k$-th level\\

\rowcolor{lightgray}
$V_\mathcal{X}$ & Vertex set of $\mathcal{X}$ & $\bigcup_{\sigma\in\mathcal{X}}\sigma$\\

$\tau \subseteq \sigma$ & $\tau$ is a face of $\sigma$ & If $\tau\subset\sigma$: proper face\\

\rowcolor{lightgray}
$\hat{d}_i(\sigma^{(n)})$ & $i$-th face map & Deletes $i$-th vertex (Def. 3.4)\\

$\mathcal{X}_q$ & Simplices of dim $\ge q$ & $\{\sigma^{(n)}:q\le n\}$\\

\rowcolor{lightgray}
ADSC & Abstract directed simplicial complex & Ordered simplices (Def. 3.3)\\

$[v_0,\ldots,v_k]$ & Directed $k$-simplex & Totally ordered vertex set\\

\rowcolor{lightgray}
$\mathrm{Fl}(G)$ & Flag (clique) complex of $G$ & Undirected version (Def.\ 4.1)\\

$\mathrm{dFl}(G)$ & Directed flag complex (DFC) & Directed version (Def. 4.3)\\

\rowcolor{lightgray}
$(\mathcal{X},\tilde{\omega})$ & Weighted ADSC & Weight function $\tilde{\omega}:\mathcal{X}\to\mathcal{R}$\\

$\tilde{\omega}(i)$ & Node-weight & $\max(\deg^-_\omega(i),\deg^+_\omega(i))$ (Def. 5.2) \\

\rowcolor{lightgray}
$\tilde{\omega}(\sigma^{(n)})$ & Product-weight of simplex & $\prod_{i=0}^n\tilde{\omega}(i)$ (Def. 5.3) \\

\bottomrule % <-- Bottomrule here

\end{longtable}
}

%-----------
\subsection{Directed Q-Analysis Notations}

{\renewcommand{\arraystretch}{1.35}
\begin{longtable}{>{\raggedright}p{2.4cm} p{6.0cm} p{6.0cm}  }
\caption{Directed Q-Analysis: adjacency and connectivity relations.}
\label{tab:table3}\\
\toprule
\textbf{Symbol} & \textbf{Meaning} & \textbf{Notes / References} \\
\midrule % <-- Midrule here

$\sigma \sim_q \tau$ & $q$-near (undirected) & Share a $q$-face (Def. 6.1)\\

\rowcolor{lightgray}
$\sigma \bm{\sim}_{\mathbf{q}} \tau$ & $q$-connected (undirected) & Chain of $q$-connections (Def. 6.1)\\

$\sigma \sim^{+}_{q} \tau$ & Out-$q$-near ($+$) & $(q,\hat{d}_i,\hat{d}_j)$-near with $i\le j$ (Def. 6.6)\\

\rowcolor{lightgray}
$\sigma \sim^{-}_{q} \tau$ & In-$q$-near ($-$) & $(q,\hat{d}_i,\hat{d}_j)$-near with $i\ge j$ (Def. 6.6)\\

$\sigma \sim^{\pm}_{q} \tau$ & Bidirectional $q$-near & Both $+$ and $-$ (Def. 6.6)\\

\rowcolor{lightgray}
$\bullet\in\{-,+,\pm\}$ & Direction symbol & $+$ = out, $-$ = in, $\pm$ = bidir.\\

$\sigma \bm{\sim}^{\bullet}_{\mathbf{q}}\tau$ & $({\bullet})$-$q$-connected & Directed $q$-chain (Def. 6.8)\\

\rowcolor{lightgray}
$\sigma\sim^{\bullet}_{L_{q}}\tau$ & Lower $({\bullet})$-$q$-adjacent & Same as $({\bullet})$-$q$-near (Def. 6.10)\\

$\sigma\sim^{\bullet}_{L_{q^{*}}}\tau$ & Strictly lower adjacent & $q$-near, not $(q+1)$-near (Def. 6.10)\\

\rowcolor{lightgray}
$\sigma\sim^{\bullet}_{U_{p}}\tau$ & Upper $({\bullet})$-$p$-adjacent & Nested in common $p$-simplex (Def. 6.12)\\

$\sigma\sim^{\bullet}_{U_{p^*}}\tau$ & Strictly upper adjacent & $p$-near, not $(p+1)$-near (Def. 6.12)\\

\rowcolor{lightgray}
$\sigma\sim^{\bullet}_{A_{q}}\tau$ & General $({\bullet})$-$q$-adjacent & Strict lower \& not upper (Def. 6.13)\\

$\sigma \sim^{\bullet}_{A_{q^*}}\tau$ & Maximal $({\bullet})$-$q$-adjacent & Cannot extend (Def. 6.13)\\

\rowcolor{lightgray}
$\sigma \bm{\sim}^{\bullet}_{A_{q^*}} \tau$ & Maximal $({\bullet})$-$q$-connected & Via maximal $q$-walk (Def. 6.16)\\

$S^s_q$ & Strongly $q$-conn.\ relation & Equivalence relation (Prop. 6.18)\\

\rowcolor{lightgray}
$S^{w}_{q}$ & Weakly $q$-conn.\ relation & Disregards direction\\

$\mathrm{dFl}_{q}(G)$ & All simplices of dim $\ge q$ & Subcomplex (Def. 6.14)\\

\rowcolor{lightgray}
$\mathrm{dFl}^{*}(G)$ & Set of maximal simplices & Not face of any other (Def. 6.14)\\

$\mathrm{dFl}^{*}_{q}(G)$ & Maximal simplices, dim $\ge q$ & Vertices of $\mathcal{G}_q$ (Def. 6.14)\\

\bottomrule % <-- Bottomrule here

\end{longtable}
}

%-----------
\subsection{$q$-Digraph Notations (Generic, Lower, and Maximal)}

{\renewcommand{\arraystretch}{1.35}
\begin{longtable}{>{\raggedright}p{2.4cm} p{6.0cm} p{6.0cm}  }
\caption{$q$-Digraph symbols.  Generic notation applies to both maximal ($A$) and lower ($L$) variants; see Table~\ref{tab:tablesup1} for the substitution rules.}
\label{tab:table4}\\
\toprule
\textbf{Symbol} & \textbf{Meaning} & \textbf{Notes / References} \\
\midrule % <-- Midrule here

$\mathcal{G}_q = (\mathcal{V}_q, \mathcal{E}_q)$ & Generic $q$-digraph & See Table~\ref{tab:tablesup1} for variants\\

\rowcolor{lightgray}
$\mathcal{G}_{q}^{A} = (\mathcal{V}_q, \mathcal{E}_{q}^{A})$ & Maximal $q$-digraph & Def. 6.21\\

$\mathcal{G}_{q}^{L} = (\mathcal{V}_q, \mathcal{E}_{q}^{L})$ & Lower $q$-digraph & Def. 6.21 (lower variant)\\

\rowcolor{lightgray}
$\mathcal{V}_q$ & Vertex set of $\mathcal{G}_q$ & $=\mathrm{dFl}^*(G)$ (common to both variants)\\

$\mathcal{E}_q$ & Arc set of $\mathcal{G}_q$ & $q$-arcs\\

\rowcolor{lightgray}
$|\mathcal{V}_q|$ & Number of vertices in $\mathcal{G}_q$ & Varies with $q$\\

$(\sigma, \tau)$ & Generic $q$-arc from $\sigma$ to $\tau$ & When $\sigma \sim^+_{A_{q^*}} \tau$ or $\sigma \sim^+_{L^q} \tau$\\

\rowcolor{lightgray}
$\mathcal{H}_q = (h_{\sigma\tau})$ & Generic $q$-adjacency matrix & $\mathcal{H}_{q}^{A}$ Eq. (9) or $\mathcal{H}_{q}^{L}$ (lower variant, Eq. (10)\\

$\mathcal{G}^{\tilde\omega}_q$ & Weighted $q$-digraph & Def. 6.24\\

\rowcolor{lightgray}
$\mathcal{H}^{\tilde\omega}_q$ & Weighted $q$-adjacency matrix & Eq. (12)\\

$\vec{d}_q(\sigma, \tau)$ & Generic directed simplicial $q$-distance & $\vec{d}_{q}^{A}$ (maximal) or $\vec{d}_{q}^{L}$ (lower) (shortest $q$-walk length (Def. 6.17))\\

\rowcolor{lightgray}
$\vec{d}^\omega_q(\sigma,\tau)$ & Weighted $q$-distance & Uses weight-to-dist. function $F$\\

$s^q_{\sigma \to \tau}$ & Shortest directed simplicial $q$-walk & Attains $\vec{d}_q(\sigma,\tau)$\\

\rowcolor{lightgray}
$N_q$ & Size of giant $q$-component & Largest weakly $q$-conn.\ component\\

$\mathrm{st}^{\bullet}_{L_{q}}(\sigma)$ & Lower $({\bullet})$-$q$-star & $\{\tau:\sigma\sim^{\bullet}_{L_{q}}\tau\}$ (Def. 6.26)\\

\rowcolor{lightgray}
$\mathrm{st}^{\bullet}_{A_{q^*}}(\sigma)$ & Maximal $({\bullet})$-$q$-star & $\{\tau: \sigma \sim^{\bullet}_{A_{q^*}}\tau\}$ (Def. 6.26)\\

$\deg^{\bullet}_{q}(\sigma)$ & Generic $({\bullet})$-$q$-degree & $\deg^\bullet_{A_{q^*}}$ or $\deg^\bullet_{L_q}$; see Table~\ref{tab:lower-maximal-conventions}\\

\rowcolor{lightgray}
$\deg^{\bullet}_{A_{q^*}}(\sigma)$ & Maximal $({\bullet})$-$q$-degree & $|\mathrm{st}^{\bullet}_{A_{q^*}}(\sigma)|$ (Def. 6.29)\\

$\deg^{\bullet}_{L_q}(\sigma)$ & Lower $({\bullet})$-$q$-degree & $|\mathrm{st}^{\bullet}_{L_{q}}(\sigma)|$ (Def. 6.29)\\

\rowcolor{lightgray}
$\mathrm{lk}^-(\sigma),\mathrm{lk}^+(\sigma)$ & In/out-link of $\sigma$ & Generalised neighbourhood (Def. 6.28)\\

$\mathrm{hub}(\mathcal{F})$ & Hub of simplicial a family $\mathcal{F}$ & $\bigcap_{\sigma\in\mathcal{F}}\sigma$ (Def. 6.27)\\

\rowcolor{lightgray}
$\mathbf{f}(\mathcal{X})$ & Structure vector & $(f_0,\ldots,f_{\dim\mathcal{X}})$ (Def.\ 6.1) (classical)\\

$\mathrm{Str}_n(\mathcal{X})$ & $n$-th structure vector & 5 variants (Def. 7.1)\\

\bottomrule % <-- Bottomrule here

\end{longtable}
}

%---------------------------------------------------------
%---------------------------------------------------------
\section{Reference Tables of Simplicial Measures}
\label{supmeasures}

\noindent \textbf{Generic notation.} All formulas in this section use the \emph{generic} symbols $\mathcal{G}_{q}^{L}$, $\mathcal{H}_{q}^{L}$, $\vec{d}_{q}$, and $\deg^\bullet_q$ (see Table~\ref{tab:tablesup1}). The \textit{maximal} variant is obtained by reading $\mathcal{G}_{q}^{A}$, $\mathcal{H}_{q}^{A}$, $\vec{d}_{q}^{A}$, $\deg^\bullet_{A_{q^*}}$; the \textit{lower} variant by reading $\mathcal{G}_{q}^{L}$, $\mathcal{H}_{q}^{L}$, $\vec{d}_{q}^{L}$, $\deg^\bullet_{L_q}$.

\medskip

%-----------
\subsection{Distance-Based Simplicial Measures}

{\renewcommand{\arraystretch}{1.35}
\begin{longtable}{>{\raggedright}p{3.5cm} p{5.6cm}  p{0.8 cm}  p{4.5cm} }
\caption{Distance-based simplicial measures on a $q$-digraph $\mathcal{G}_q$. $\mathcal{H}^{'}_{q}$ is the $q$-adjacency matrix of the underlying undirected $q$-graph. Convention: $\vec{d}_q(\sigma,\tau) = 0$ when no directed path exists (for $\vec{\bar{L}}_q$).}
\label{tab:table5}\\
\toprule
\textbf{Measure} & \textbf{Formula} & \textbf{Eq.} & \textbf{Graph Analogue} \\
\midrule % <-- Midrule here

Avg.\ shortest directed $q$-walk length $\vec{\bar{L}}_q$ & $\displaystyle\frac{1}{|\mathcal{V}_q|}\sum_{\sigma \in \mathcal{V}_q}\frac{\sum_{\tau \ne \sigma} \vec{d}_{q} (\sigma,\tau)}{|\mathcal{V}_q| - 1}$ & (19) & Characteristic path length\\

\rowcolor{lightgray}
Directed simplicial $q$-eccentricity $\mathrm{ecc}_q(\sigma)$ &
$\max_{\tau \in \mathcal{V}_q} \vec{d}_q(\sigma,\tau)$ &
(20) & Eccentricity\\

Directed simplicial $q$-diameter $\mathrm{diam}(\mathcal{G}_q)$ & $\max_{\sigma \in \mathcal{V}_q}\mathrm{ecc}_q(\sigma)$ &
(21) & Diameter\\

\rowcolor{lightgray}
Directed simplicial $q$-radius $\mathrm{rad}(\mathcal{G}_q)$ &
$\min_{\sigma \in \mathcal{V}_q}\mathrm{ecc}_q(\sigma)$ &
(22) & Radius\\

Directed simplicial global $q$-efficiency $\vec{E}^q_{\mathrm{glob}}$ & $\displaystyle\frac{1}{|\mathcal{V}_q|}\sum_\sigma\frac{\sum_{\tau \ne \sigma} \vec{d}_q(\sigma,\tau)^{-1}}{|\mathcal{V}_q| - 1}$ & (23) & Global efficiency\\

\rowcolor{lightgray}
Simplicial $q$-communicability $\mathrm{CM}_q(\sigma,\tau)$ &
$\big(\exp(\mathcal{H}_q)\big)_{\sigma\tau}$ & (24) & Communicability\\

Simplicial $q$-returnability $K_{r,q}$ & $\mathrm{Tr}(\exp(\mathcal{H}_q))-|\mathcal{V}_q|$ & (25) & Returnability\\

\rowcolor{lightgray}
Relative $q$-returnability $K'_{r,q}$ &
$\dfrac{\mathrm{Tr}(\exp(\mathcal{H}_q))-|Vq|}{\mathrm{Tr}(\exp(\mathcal{H}^{'}_{q}))-|\mathcal{V}_q|}$ &
(26) & Relative returnability\\

\bottomrule % <-- Bottomrule here

\end{longtable}
}

%-----------
\subsection{Simplicial Centrality Measures}

{\renewcommand{\arraystretch}{1.35}
\begin{longtable}{>{\raggedright}p{3.5cm} p{5.6cm}  p{0.8 cm}  p{4.5cm}  }
\caption{Simplicial centrality measures. $r_{\mathcal{G}_{q}}(\sigma)$ = vertices reachable from $\sigma$; $\vec{l}^q_{\tau'\tau}(\sigma)$ = number of shortest $q$-walks $\tau'\to\tau$ via $\sigma$; $\vec{l}^q_{\tau'\tau}$ = total shortest walks $\tau'\to \tau$; $\alpha < 1/\lambda^q_1$ for Katz convergence.}
\label{tab:table6}\\
\toprule
\textbf{Measure} & \textbf{Formula} & \textbf{Eq.} & \textbf{Graph Analogue} \\
\midrule % <-- Midrule here

Simplicial in-$q$-degree centrality $C^-_{\mathrm{deg}_q}(\sigma)$ & $\displaystyle\frac{\deg^-_{q}(\sigma)}{|\mathcal{V}_q|-1}$ & (27) & In-degree centrality\\

\rowcolor{lightgray}
Simplicial out-$q$-degree centrality $C^+_{\mathrm{deg}_q}(\sigma)$ &
$\displaystyle\frac{\deg^+_{q}(\sigma)}{|\mathcal{V}_q|-1}$ & (28) & Out-degree centrality\\

Directed simplicial $q$-closeness $\vec{Cl}_q(\sigma)$ &
$\frac{1}{\sum_{\tau\ne\sigma} \vec{d}_q(\sigma,\tau)}$ & (29) & Closeness centrality\\

\rowcolor{lightgray}
Normalised $q$-closeness $\vec{Cl}_q(\sigma)$ &
$\displaystyle\frac{N_q - 1}{\sum_{\tau \ne \sigma} \vec{d}_q(\sigma,\tau)}$ & (30) & Normalised closeness\\

Directed $q$-harmonic centrality $\vec{HC}_q(\sigma)$ &
$\displaystyle\sum_{\tau \ne \sigma}\frac{1}{\vec{d}_q(\sigma,\tau)}$ & (31) & Harmonic centrality\\

\rowcolor{lightgray}
Directed $q$-betweenness $\vec{B}_q(\sigma)$ &
$\displaystyle\sum_{\tau' \ne \tau\ne\sigma}\frac{\vec{l}^q_{\tau' \tau}(\sigma)}{\vec{l}^q_{\tau'\tau}}$ & (32) & Betweenness centrality\\

Normalised $q$-betweenness $\vec{B}_q(\sigma)$ &
$\displaystyle\frac{1}{(N_q-1)(N_q-2)}\sum_{\tau'\ne\tau\ne\sigma}\frac{\vec{l}^q_{\tau'\tau}(\sigma)}{\vec{l}^q_{\tau'\tau}}$ & (33) & Normalised betweenness\\

\rowcolor{lightgray}
Local $q$-reaching centrality $C_{R,q}(\sigma)$ &
$\displaystyle\frac{r_{\mathcal{G}_q}(\sigma)}{|\mathcal{V}_q|-1}$ & (34) & Reaching centrality\\

Global $q$-reaching centrality $\mathrm{GRC}_q$ &
$\displaystyle\frac{\sum_\sigma[C^{\max}_{R,q} - C_{R,q}(\sigma)]}{|\mathcal{V}_q|-1}$ & (35) & Global reaching centrality\\

\bottomrule % <-- Bottomrule here
\end{longtable}
}

%-----------
\subsection{Simplicial Segregation Measures}

{\renewcommand{\arraystretch}{1.35}
\begin{longtable}{>{\raggedright}p{3.5cm} p{5.6cm}  p{0.8 cm}  p{4.5cm}  }
\caption{Simplicial segregation measures.
	$\vec{T}_q(\sigma)$ = directed triangles at level $q$ containing $\sigma$; $\mathrm{deg}^{\mathrm{tot}}_{q}(\sigma) = \deg^-_{q}(\sigma)+\deg^+_{q}(\sigma)$;
	$f_{\sigma \tau}$ = dim of shared face; $E^{\pm}_{>k}$, $F^{\pm}_{>k}$ = edges/nodes with in/out-$q$-degree $>k$.}
\label{tab:table7}\\
\toprule
\textbf{Measure} & \textbf{Formula} & \textbf{Eq.} & \textbf{Graph Analogue} \\
\midrule % <-- Midrule here

Avg.\ directed $q$-clustering $\vec{\bar{C}}_q(\mathcal{G}_q)$ & {\tiny $\displaystyle\frac{1}{|\mathcal{V}_q|}\sum_\sigma\frac{\vec{T}_q(\sigma)}{\mathrm{deg}^{\mathrm{tot}}_{q}(\sigma)(\mathrm{deg}^{\mathrm{tot}}_{q}(\sigma)-1)-2\deg^\pm_{q}(\sigma)}$} & (36) & Directed clustering\\

\rowcolor{lightgray}
Simplicial $({\bullet})$-$q$-clustering $\vec{C}^{\bullet}_q(\sigma^{(n)})$ & $\displaystyle\sum_{\tau^{(m)}\in\mathrm{st}^{\bullet}_{q}(\sigma)}\frac{2^{1+f_{\sigma\tau}}-1}{2^{n+2m} - 1}$ & (37) & Simplicial clustering\\

Simplicial in-$q$-rich-club $\phi^-_{q,k}(\mathcal{G}_q)$ & $\displaystyle\frac{E^-_{>k}(q)}{F^-_{>k}(q)(F^-_{>k}(q)-1)}$ &
(38) & In-rich-club\\

\rowcolor{lightgray}
Simplicial out-$q$-rich-club $\phi^+_{q,k}(\mathcal{G}_q)$ &
$\displaystyle\frac{E^+_{>k}(q)}{F^+_{>k}(q)(F^+_{>k}(q)-1)}$ &
(39) & Out-rich-club\\

Directed simplicial local $q$-efficiency $\vec{E}^q_{\mathrm{loc}}(\mathcal{G}_q)$ & $\displaystyle\frac{1}{|\mathcal{V}_q|}\sum_{\sigma \in \mathcal{V}_q}\vec{E}^q_{\mathrm{glob}}(\mathcal{G}_q(\sigma))$ & (40) & Local efficiency\\

\bottomrule % <-- Bottomrule here
\end{longtable}
}

%-----------
\subsection{Spectrum-Related Simplicial Measures}

{\renewcommand{\arraystretch}{1.35}
\begin{longtable}{>{\raggedright}p{3.5cm} p{5.6cm}  p{0.8 cm}  p{4.5cm}  }
\caption{Spectrum-related simplicial measures; $\{\varsigma^q_k\}$ = singular values of $\mathcal{H}_q$.}
\label{tab:table8}\\
\toprule
\textbf{Measure} & \textbf{Formula} & \textbf{Eq.} & \textbf{Graph Analogue} \\
\midrule % <-- Midrule here

Simplicial $q$-energy $\varepsilon_q(\mathcal{G}_q)$ &
$\| (\mathcal{H}_q \mathcal{H}_{q}^{T})^{1/2} \|_{*} = \sum_k\varsigma^q_{k}$ &
(41) & Graph energy\\

\rowcolor{lightgray}
Simplicial in-$q$-Katz centrality $K^-_q(\sigma)$ &
$\big[ \big\langle \mathbf{1} \big|(I - \alpha \mathcal{H}_q)^{-1}\big]_\sigma$ & (42) & In-Katz centrality\\

Simplicial out-$q$-Katz centrality $K^+_q(\sigma)$ &
$\big[ \big(I - \alpha \mathcal{H}_q)^{-1} \big| \mathbf{1} \big\rangle \big]_\sigma$ & (43) & Out-Katz centrality\\

\rowcolor{lightgray}
Right $q$-eigenvector centrality $C^q_{e,r}(\sigma)$ &
$(v^q_1)_\sigma$ with $\mathcal{H}_q v^q_1 = \lambda^q_1 v^q_1$ & (44) & Eigenvector centrality\\

Left $q$-eigenvector centrality $C^q_{e,l}(\sigma)$ &
$(v^q_1)_\sigma$ with $\mathcal{H}_{q}^{T} v^{q}_{1} = \lambda^q_1 v^q_1$ & (45) & Left eigenvector centrality\\

\bottomrule % <-- Bottomrule here
\end{longtable}
}

%-----------
\subsection{Similarity and Comparison Measures}

{\renewcommand{\arraystretch}{1.35}
\begin{longtable}{>{\raggedright}p{3.8cm} p{7.7cm}  p{1.5cm}  }
\caption{Simplicial similarity comparison measures.
	$\mathrm{Str}^*_n(\mathcal{X}_2)$ fills with zeros to match dimension of $\mathrm{Str}_n(\mathcal{X}_1)$; $e(\mathcal{X}_1,\mathcal{X}_2)$ = edit distance; $K_J$ and $K_E$ equal $0$ when complexes are identical.}
\label{tab:table9}\\
\toprule
\textbf{Measure} & \textbf{Formula / Definition} & \textbf{Eq.} \\
\midrule % <-- Midrule here

1st Structure vector $\mathrm{Str}_1(\mathcal{X})$ & $(s^1_0,\ldots,s^1_N)$; $s^1_k=$ number of directed $k$-simplices in $\mathcal{X}$ & Def.\ 7.1\\

\rowcolor{lightgray}
2nd Structure vector $\mathrm{Str}_2(\mathcal{X})$ & $(s^2_0,\ldots,s^2_N)$; $s^2_q=$ number of weakly $q$-conn.\ components & Def.\ 7.1\\

3rd Structure vector $\mathrm{Str}_3(\mathcal{X})$ & $(s^3_0,\ldots,s^3_N)$; $s^3_q=$ number of strongly $q$-conn.\ components & Def.\ 7.1\\

\rowcolor{lightgray}
4th Structure vector $\mathrm{Str}_4(\mathcal{X})$ & $(s^4_0,\ldots,s^4_N)$; $s^4_q=1-s^2_q/|\mathcal{V}_q|$ (degree of weak connectedness) & Def.\ 7.1\\

5th Structure vector $\mathrm{Str}_5(\mathcal{X})$ & $(s^5_0,\ldots,s^5_N)$; $s^5_q=1-s^3_q/|\mathcal{V}_q|$ (degree of strong connectedness) & Def.\ 7.1\\

\rowcolor{lightgray}
Normalised $n$-th structure distance $\hat{T}^n_{sd}(\mathcal{X}_1,\mathcal{X}_2)$ &
$\dfrac{\|\mathrm{Str}_n(\mathcal{X}_1) - \mathrm{Str}^*_n(\mathcal{X}_2)\|_2}{\|\mathrm{Str}_n(\mathcal{X}_1)\|_2+\|\mathrm{Str}^*_n(\mathcal{X}_2)\|_2}$ $\in [0,1]$ & (46)\\

Histogram cosine kernel $K_{HC}$ &
$\dfrac{\langle\mathrm{Str}_1(\mathcal{X}_1),\mathrm{Str}^*_1(\mathcal{X}_2)\rangle}{\|\mathrm{Str}_1(\mathcal{X}_1)\|\,\|\mathrm{Str}^*_1(\mathcal{X}_2)\|}$ & (47)\\

\rowcolor{lightgray}
Jaccard kernel $K_J$ &
$\dfrac{|\mathcal{X}_1\cap\mathcal{X}_2|}{|\mathcal{X}_1\cup\mathcal{X}_2|}$ $ \in [0,1]$ & (48)\\

Edit kernel $K_E$ &
$1-\dfrac{2e(\mathcal{X}_1,\mathcal{X}_2)}{|\mathcal{X}_1|+|\mathcal{X}_2|+e(\mathcal{X}_1,\mathcal{X}_2)}$ & (49)\\

\rowcolor{lightgray}
Stratified edit kernel $K_{SE}$ &
$\dfrac{1}{|\mathcal{D}|}\sum_{k\in\mathcal{D}}K_E(\mathcal{X}^k_1,\mathcal{X}^k_2)$ ($\mathcal{D}$ $=$ dimension set) & (50)\\

\bottomrule % <-- Bottomrule here
\end{longtable}
}

%---------------------------------------------------------
%---------------------------------------------------------
\section{Interpretability of the Different Types of Directed Higher-Order Adjacencies}
\label{sec:interpretability}

In this section, we provide a brief discussion on the interpretability of the different types of directed higher-order adjacencies introduced in the text, each with a distinct structural meaning.

\textbf{Lower $(\bullet)$-$q$-adjacency} ($\sigma \sim_{L_{q}}^{\bullet} \tau$) holds when two directed simplices share a common $q$-face, capturing the most direct form of higher-order topological adjacency. It is the higher-order analogue of two communities overlapping at their boundary, with a certain direction of influence. The \emph{strictly lower} variant $\sim_{L_{q^{*}}}^{\bullet}$, however, requires that the highest dimensional face shared by the two directed simplices is \textit{exactly} $q$-dimensional.

\textbf{Upper $(\bullet)$-$p$-adjacency} ($\sigma \sim_{U_{p}}^{\bullet} \tau$) holds when both directed simplices are faces of a common, larger directed $p$-simplex $\Theta^{(p)}$, and the face map indices indicate their relative position. It generalises co-membership in a bigger community, and at $p=1$ it reduces exactly to ordinary arc adjacency between nodes.  Analogously to the lower case, the \emph{strictly upper} variant $\sim_{U_{p^{*}}}^{\bullet}$ requires that the highest dimensional common directed simplex is \textit{exactly} $p$-dimensional.

\textbf{General $(\bullet)$-$q$-adjacency} ($\sigma \sim_{A_{q}}^{\bullet} \tau$) holds when the $q$-face shared by $\sigma$ and $\tau$ is \textit{not} a consequence of both being faces of a common, larger directed simplex. The structural meaning of this restriction is that the two directed simplices are intrinsically adjacent at the $q$-th organizational level and cannot be explained by a bigger community containing both.

\textbf{Maximal $(\bullet)$-$q$-adjacency} ($\sigma \sim_{A_{q^{*}}}^{\bullet} \tau$) retains only the pair of maximal directed simplices that share a face of dimension exactly equal to $q$, discarding sub-simplices that carry redundant adjacency information. This gives the most non-redundant representation of higher-order adjacency at level $q$.

%---------------------------------------------------------
%---------------------------------------------------------
\section{Worked Examples}
\label{sec:examples}

All examples below use the \textit{maximal} variant ($\mathcal{G}_q^A$, $\mathcal{H}_q^A$, $\vec{d}_q^A$,
$\deg^\bullet_{A_{q^*}}$) throughout, however, we will adopt the generic notation, according to the Table~\ref{tab:tablesup1}.

\medskip

%----------
%----------
\subsection{Example 1: Bridge Digraph}
\label{ex:ex1}

%----------
\subsubsection{Digraph definition and DFC}

\begin{definition}[Bridge Digraph]
Let $G_1 = (V, E)$ with
$$
V = \{ 0,1,2,3,4,5 \}, \quad
E = \{ (0,1),(0,2),(1,2),(2,3),(3,4),(3,5),(4,5) \}.
$$
\end{definition}

Figure~\ref{fig:G1} shows the graphical representation of $G_1$: two directed $3$-cliques, $[0,1,2]$ and $[3,4,5]$, connected by the bridge arc $[2,3]$.
\begin{figure}[h!]
\centering
\includegraphics[scale=1.3]{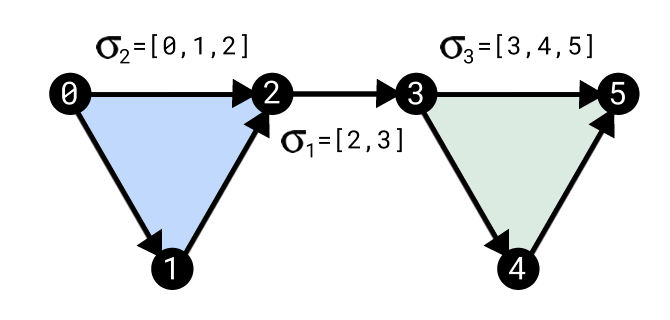}
\caption{Bridge Digraph $G_1$. Bridge arc: $\sigma_{1} = [2,3]$; directed $3$-clique $\sigma_{1} = [0,1,2]$ (blue); directed $3$-clique $\sigma_{3} = [3,4,5]$ (green).}
\label{fig:G1}
\end{figure}

Directed flag complex associated with $G_1$:

\begin{center}
\begin{tabular}{clc}
\toprule
\textbf{Dimension} & \textbf{Simplices} & \textbf{Count}\\
\midrule
0 & $[0],[1],[2],[3],[4],[5]$ & 6\\
1 & $[0,1],[0,2],[1,2],[2,3],[3,4],[3,5],[4,5]$ & 7\\
2 & $[0,1,2],\;[3,4,5]$ & 2\\
\bottomrule
\end{tabular}
\end{center}

\smallskip

\begin{itemize}
\item  \textbf{Maximal simplices:} $\mathrm{dFl}^*(G_1) = \{ \sigma_{1} = [2,3], \sigma_{2} = [0,1,2], \sigma_{3} = [3,4,5]\}$.

\item \textbf{1st Structure vector}: $Str_{1} = (6,7,2)$.
\end{itemize}

Note that arc $\sigma_{1} = [2,3]$ is maximal because it is not contained in any $2$-simplex (no $3$-clique spans $\{2,3,x\}$ for any $x$). The two directed triangles $[0,1,2]$ and $[3,4,5]$ are not connected through any shared directed face at $q \ge 1$.

%-------------
\subsubsection{Level $q = 0$}

Since none of the maximal simplices share faces of dimensions greater than $0$, to verify maximal $(\bullet)$-$q$-adjacencies for each pair $(\sigma_i, \sigma_j)$, $\sigma_i \sim^+_{A_{0^{*}}} \sigma_j$, it suffices to verify the lower $(\bullet)$-$q$-adjacencies:

\begin{itemize}[noitemsep]
\item $\sigma_2 = [0,1,2] \sim^{\pm}_{A_{0^*}}\sigma_1 = [2,3]$: $\hat{d}_1([0,1,2]) = [0,2]$ and $\hat{d}_1([2,3]) = [2]$; $2\in[0,2]$, $i = j = 1$. 

\item $\sigma_1=[2,3]\sim^{\pm}_{A_{0^*}}\sigma_2 = [0,1,2]$: $\hat{d}_1([2,3])=[2]$ and $\hat{d}_1([0,1,2])=[0,2]$; $2\in[0,2]$, $i= j = 1$. 

\item $\sigma_1 = [2,3]\sim^{+}_{A_{0^*}}\sigma_3 = [3,4,5]$: $\hat{d}_0([2,3]) = [3]$ and $\hat{d}_1([3,4,5]) = [3,5]$; $3\in[3,5]$, $i=0\le j=1$. 

\end{itemize}

The vertex set of $\mathcal{G}_{0}$ is $\mathcal{V}_{0} = \{ \sigma_1, \sigma_2, \sigma_3 \}$, and the arc set is given by the previous relations:
$$
\mathcal{E}_{0} = \{ (\sigma_1, \sigma_2), (\sigma_2, \sigma_1), (\sigma_1, \sigma_3) \}.
$$

The $0$-adjacency matrix of $\mathcal{G}_0$ (rows/cols: $\sigma_1,\sigma_2,\sigma_3$) is:
$$
\mathcal{H}_{0} =  
\bbordermatrix{
& \sigma_1 & \sigma_2 & \sigma_3 \cr
\sigma_1   & 0  & 1 & 1\cr 
\sigma_2   & 1  & 0 & 0\cr 
\sigma_3   & 0  & 0 & 0\cr 
}.
$$

%-------------
\subsubsection{Level $q \ge 1$}

\noindent \textbf{Level $q = 1$}: Strictly lower $(+)$-$1$-adjacency requires sharing a $1$-simplex but not a $2$-simplex. The only $1$-simplex candidate between the three maximal simplices is $\sigma_1 = [2,3]$, but it has no $1$-simplex face in common with $\sigma_2$ or $\sigma_3$. Therefore $\mathcal{H}_1 = \mathbf{0}$.

\medskip

\noindent \textbf{Level $q \ge 2$}: Since $\sigma_2$ and $\sigma_3$ share no vertices, we have $\mathcal{H}_q = \mathbf{0}$, for all $q \ge 2$.

%-------------
\subsubsection{Simplicial Measures for $\mathcal{G}_{0}$}

\begin{table}[h!]
\centering
\caption{Selected local directed simplicial measures for $\mathcal{G}_{0}$.}
\begin{tabular}{l c c c}
\toprule
\textbf{Local measure} & $\sigma_1=[2,3]$ & $\sigma_2=[0,1,2]$ & $\sigma_3=[3,4,5]$\\

\midrule
$0$-Communicability to $\sigma_1$ $\mathrm{CM}_{0}(\cdot, \sigma_1)$ & 1.5431 & 1.1752 & 0.00\\
$0$-Communicability to $\sigma_2$ $\mathrm{CM}_{0}(\cdot, \sigma_2)$ & 1.1752 & 1.5431 & 0.00\\
$0$-Communicability to $\sigma_3$ $\mathrm{CM}_{0}(\cdot, \sigma_3)$ & 1.1752 & 0.5431 & 1.00\\

In-$0$-degree centrality $C^-_{\mathrm{deg}_0}$ & 0.50 & 0.50 & 0.50\\

Out-$0$-degree centrality $C^+_{\mathrm{deg}_0}$ & 1.00 & 0.50 & 0.00\\

$0$-Harmonic centrality $\vec{HC}_0$ & 2.00 & 1.50 & 0.00\\

$0$-Closeness centrality $\vec{Cl}_0$ & 0.50 & 0.33 & 0.00\\
$0$-Betweenness centrality $\vec{B}_0$ (non-norm.) & 1.00 & 0.00 & 0.00\\

Local $0$-reaching centrality $C_{R,0}$ & 1.00 & 1.00 & 0.00\\

\bottomrule
\end{tabular}
\label{tab:table11}
\end{table}

\begin{table}[h!]
\centering
\caption{Selected global directed simplicial measures for $\mathcal{G}_{0}$.}
\begin{tabular}{l c}
\toprule
\textbf{Global measure} & \textbf{Value}\\
\midrule

Avg.\ shortest $0$-walk length $\vec{\bar{L}}_0$ & 0.8333\\

Global $0$-efficiency $\vec{E}^0_{\mathrm{glob}}$ & 0.5833\\

$0$-Diameter & 2\\

$0$-Radius & 1\\

$0$-Returnability $K_{r,0}$ (non-norm.) & 1.0862\\

$0$-Energy $\varepsilon_0$ ($=1+\sqrt{2}$) & 2.4142\\

Singular values of $\mathcal{H}_0$ & $\sqrt{2}, 1, 0$\\

Global $0$-reaching centrality $\mathrm{GRC}_0$ & 0.5000\\

Weakly $0$-connected components & 1\\

Strongly $0$-connected components & 2\\

\bottomrule
\end{tabular}
\label{tab:table12}
\end{table}

Based on the results of the simplicial measures (Table~\ref{tab:table11} and Table~\ref{tab:table12}), we conclude that the bridge arc $\sigma_1=[2,3]$ is the most central simplex: it has the highest harmonic centrality ($2.0$), the only non-zero betweenness ($1.0$), and the highest out-degree ($1.0$). The only directed path from $\sigma_2$ to $\sigma_3$ passes through $\sigma_1$, confirming its structural role as a topological bottleneck.

%----------
%----------
\subsection{Example 2: Directed $4$-Clique}

%----------
\subsubsection{Digraph definition and DFC}

\begin{definition}
The \emph{directed $n$-clique} on $n$ vertices has arc set $E = \{(i,j):i < j\}$. For $n = 4$, we have $G_{2} = \{ V, E\}$ with
$$
V = \{ 0,1,2,3 \},\quad
E = \{ (0,1),(0,2),(0,3),(1,2),(1,3),(2,3) \}.
$$
\end{definition}

Figure~\ref{fig:G2} shows the graphical representation of $G_2$: vertex $0$ is the unique source (in-degree $0$), and vertex $3$ is the unique sink (in-degree $3$).

\begin{figure}[h!]
\centering
\includegraphics[scale=1.3]{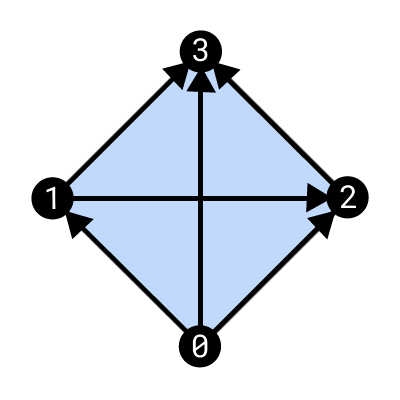}
\caption{Directed $4$-clique $G_2$.  }
\label{fig:G2}
\end{figure}

Directed flag complex associated with $G_2$:

\begin{table}[h!]
\centering
\begin{tabular}{clc}
\toprule
\textbf{Dimension} & \textbf{Simplices} & \textbf{Count}\\
\midrule
0 & $[0],[1],[2],[3]$ & 4\\
1 & $[0,1],[0,2],[0,3],[1,2],[1,3],[2,3]$ & 6\\
2 & $[0,1,2],[0,1,3],[0,2,3],[1,2,3]$ & 4\\
3 & $[0,1,2,3]$ & 1\\
\bottomrule
\end{tabular}
\end{table}

\medskip

\begin{itemize}
\item \textbf{Maximal simplexes:} $\mathrm{dFl}^{*}(G_2) = \{[0,1,2,3]\}$ (one single directed 4-clique).

\item \textbf{1st Structure vector}: $\mathrm{Str}_1 = (4,6,4,1)$.
\end{itemize}

%-------------
\subsubsection{Level $q \ge 0$}

For every $q = 0,1,2,3$, we have $\mathrm{dFl}^*_q(G_2) = \{ [0,1,2,3] \}$, therefore $\mathcal{G}_q$ has exactly \emph{one vertex and no arcs}:
$$
\mathcal{H}_q = [0], \qquad \forall q \in \{0,1,2,3\}.
$$

%-------------
\subsubsection{Simplicial Measures for $\mathcal{G}_{q}$}

As we just discussed, the entire vertex set of $G_2$ forms a single maximal simplex $[0,1,2,3]$, therefore, every $q$-digraph  has a minimal higher-order interaction structure, which is clearly captured by the simplicial measures (Table~\ref{tab:table13}).

\begin{table}[h!]
\centering
\caption{Selected directed simplicial measures for $\mathcal{G}_{q}$, $q=0,1,2,3$.}
\label{tab:table13}
\begin{tabular}{lcccc}
\toprule
\textbf{Measure} & $q=0$ & $q=1$ & $q=2$ & $q=3$\\
\midrule
Vertex set cardinality $|\mathcal{V}_q|$ & 1 & 1 & 1 & 1\\
Arc set cardinality  $|\mathcal{E}_q|$ & 0 & 0 & 0 & 0\\
All centralities & --- & --- & --- & ---\\
$q$-Energy $\varepsilon_q$ & 0 & 0 & 0 & 0\\
$q$-Returnability $K_{r,q}$  & 0 & 0 & 0 & 0\\
\bottomrule
\end{tabular}
\end{table}

%----------
%----------
\subsection{Example 3: Directed 4-Cycle}

%----------
\subsubsection{Digraph definition and DFC}

\begin{definition}
The \emph{directed $4$-cycle} is the digraph $G_3 = (V, E)$ with
$$
V = \{0,1,2,3\},\quad
E = \{ (0,1),(1,2),(2,3),(3,0) \}.
$$
\end{definition}

Figure~\ref{fig:G3} shows the graphical representation of $G_3$: no three consecutive vertices form a directed triangle (cycles prevent it).

\begin{figure}[h!]
\centering
\includegraphics[scale=1.3]{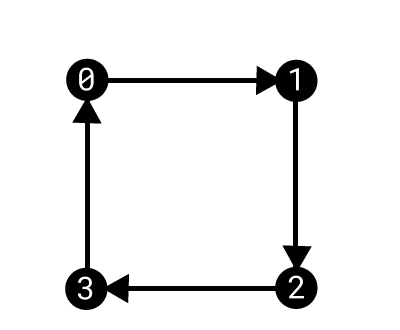}
\caption{Bridge Digraph $G_2$.}
\label{fig:G3}
\end{figure}

Directed flag complex associated with $G_3$:

\begin{table}[h!]
\centering
\begin{tabular}{clc}
\toprule
\textbf{Dim} & \textbf{Simplices} & \textbf{Count}\\
\midrule
0 & $[0],[1],[2],[3]$ & 4\\
1 & $[0,1],[1,2],[2,3],[3,0]$ & 4\\
\bottomrule
\end{tabular}
\end{table}

\medskip

\begin{itemize}
\item \textbf{Maximal simplices:} $\mathrm{dFl}^*(G_2) = \{ \sigma_1 = [0,1], \sigma_2 = [1,2], \sigma_3 = [2,3], \sigma_4 = [3,0] \}$ (four arcs).

\item \textbf{1st Structure vector}: $\mathrm{Str}_1 = (4, 4)$.
\end{itemize}

%-------------
\subsubsection{Level $q = 0$}

Similarly to Example~\ref{ex:ex1}, since none of the maximal simplices share faces of dimensions greater than $0$, to verify maximal $(\bullet)$-$q$-adjacencies for each pair $(\sigma_i, \sigma_j)$, $\sigma_i \sim^+_{A_{0^{*}}} \sigma_j$, it suffices to verify the lower $(\bullet)$-$q$-adjacencies:

\begin{itemize}[noitemsep]
\item $\hat{d}_0(\sigma_1) = [1] = \hat{d}_1(\sigma_2)$; $i = 0 \le j = 1$ $\rightarrow$ $\sigma_1  \sim^{+}_{A_{0^*}} \sigma_2$. 

\item $\hat{d}_0(\sigma_2) = [2] = \hat{d}_1(\sigma_3)$; $i = 0 \le j = 1$ $\rightarrow$ $\sigma_2  \sim^{+}_{A_{0^*}} \sigma_3$.

\item $\hat{d}_0(\sigma_3) = [3] = \hat{d}_1(\sigma_4)$; $i = 0 \le j = 1$ $\rightarrow$ $\sigma_3  \sim^{+}_{A_{0^*}} \sigma_4$.

\item $\hat{d}_0(\sigma_4) = [0] = \hat{d}_1(\sigma_1)$; $i = 0 \le j = 1$ $\rightarrow$ $\sigma_4  \sim^{+}_{A_{0^*}} \sigma_1$.

\end{itemize}

Therefore, the four arcs form a directed $4$-cycle on the $\mathcal{G}_0$ vertex set, $\mathcal{V}_0 = \{  \sigma_1, \sigma_2, \sigma_3, \sigma_4 \}$,
$$
\mathcal{E}_0 = \{ (\sigma_1, \sigma_{2}), (\sigma_2, \sigma_{3}), (\sigma_3, \sigma_{4}), (\sigma_4, \sigma_{1}) \},
$$

\noindent and the $0$-adjacency matrix of $\mathcal{G}_0$ is
$$
\mathcal{H}_{0} =  
\bbordermatrix{
& \sigma_1 & \sigma_2 & \sigma_3 & \sigma_4 \cr
\sigma_1   & 0  & 1 & 0 & 0\cr 
\sigma_2   & 0  & 0 & 1 & 0\cr 
\sigma_3   & 0  & 0 & 0 & 1\cr
\sigma_4   & 1  & 0 & 0 & 0\cr 
}.
$$

%-------------
\subsubsection{Level $q \ge 1$}

No two distinct arcs share a $1$-simplex, therefore, $\mathcal{H}_q = \mathbf{0}$, for all $q \ge 1$.

%-------------
\subsubsection{Simplicial Measures for $\mathcal{G}_{0}$}

\begin{table}[h!]
\centering
\caption{Selected directed simplicial measures for $\mathcal{G}_{0}$.}
\label{tab:tablesup4}
\begin{tabular}{lccl}
\toprule
\textbf{Measure} & \textbf{Value} \\
\midrule

Vertex set cardinality $|\mathcal{V}_0|$ & 4\\

Arc set cardinality $|\mathcal{E}_0|$ & 4 \\

Avg. shortest directed $0$-walk length $\vec{\bar{L}}_0$ & $2.0000$  \\

Global $0$-efficiency $\vec{E}^0_\text{glob}$  & $0.6111$  \\

$0$-Returnability $K_{r,0}$ (non-norm.) &  $0.1668$ \\

$0$-Energy $\varepsilon_0$ & $4.0000$\\

Eigenvalues of $\mathcal{H}_0$ &  $1,-1,i,-i$ \\

In-$0$-degree centrality $C^{-}_{\mathrm{deg}_0}(\sigma)$ & $0.333$ (all vertices)\\

Out-$0$-degree centrality $C^{+}_{\mathrm{deg}_0}(\sigma)$ &  $0.333$ (all vertices) \\

$0$-Harmonic centrality $\vec{HC}_0(\sigma)$ & $1.833$ (all vertices)\\

$0$-Betweenness centrality $\vec{B}_0(\sigma)$ &  $3.0$ (all vertices)\\

Local $0$-reaching centrality $C_{R,0}(\sigma)$ & $1.0$ (all vertices) \\

Global $0$-reaching centrality $\mathrm{GRC}_0$ & $0.0000$\\
\bottomrule
\end{tabular}
\label{tab:table14}
\end{table}

We can see from the results of the simplicial measures (Table~\ref{tab:table14}) that all vertices are structurally equivalent, all centralities are identical, and the global reaching centrality is exactly zero (no hierarchy whatsoever). very low returnability ($K_{r,0}\approx 0.17$) because closed walks require four steps.

%----------
%----------
\subsection{Example 4: Two Directed $3$-Cliques Sharing a $1$-Face}

%----------
\subsubsection{Digraph definition and DFC}

\begin{definition}
Let $G_4 = (V,E)$ be the digraph with
$$
V = \{ 0,1,2,3 \},\quad
E = \{ (0,1), (0,3), (1,2), (1,3), (2,3) \},
$$

\noindent that is, the digraph formed by two directed $3$-cliques $\sigma = [0,1,3]$ and $\tau = [1,2,3]$.
\end{definition}

Figure~\ref{fig:G4} shows the graphical representation of $G_4$: the directed $3$-cliques $\sigma = [0,1,3]$ and $\tau = [1,2,3]$ share the arc $[1,3]$. Vertex $0$ is the unique source of $\sigma$ and vertex $2$ is the unique source of $\tau$.

\begin{figure}[h!]
\centering
\includegraphics[scale=1.3]{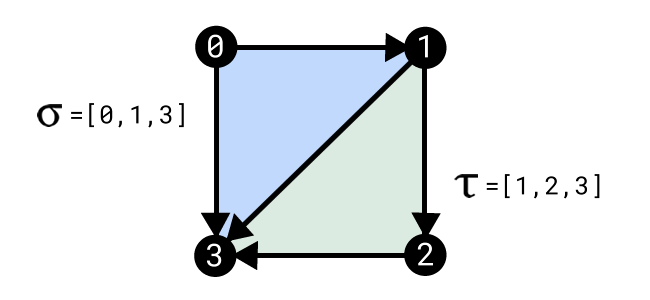}
\caption{Two directed $3$-cliques $\sigma=[0,1,3]$ (blue) and $\tau=[1,2,3]$ (green) sharing the directed $1$-face $[1,3]$.}
\label{fig:G4}
\end{figure}

Directed flag complex associated with $G_4$:

\begin{center}
\begin{tabular}{clc}
\toprule
\textbf{Dim} & \textbf{Simplices} & \textbf{Count}\\
\midrule
0 & $[0],[1],[2],[3]$ & 4\\
1 & $[0,1],[0,3],[1,2],[1,3],[2,3]$ & 5\\
2 & $[0,1,3],\;[1,2,3]$ & 2\\
\bottomrule
\end{tabular}
\end{center}

\medskip

\begin{itemize}
\item \textbf{Maximal simplices:} $\mathrm{dFl}^{*}(G_4) = \{\sigma = [0,1,3], \tau = [1,2,3] \}$.

\item \textbf{1st Strucutre vector}: $\mathrm{Str}_1 = (4,5,2)$.
\end{itemize}

Note that both $\sigma$ and $\tau$ have dimension $2$ and neither is a face of any higher-dimensional simplex (no directed $4$-clique exists).  The shared arc $[1,3]$ is the unique $1$-simplex that belongs to both simplices.

%-------------
\subsubsection{Level $q = 0$}

First, let us verify the lower $q$-adjacencies for $q=0,1$:

\begin{itemize}[noitemsep]
\item $\tau \sim^{\pm}_{0} \sigma: \hat{d}_0(\tau) = [2,3] \supseteq [3] \subseteq \hat{d}_0(\sigma)=[1,3]; i = j = 0$.

\item $\tau \sim^{+}_{0} \sigma: \hat{d}_0(\tau) = [2,3] \supseteq [3] \subseteq \hat{d}_1(\sigma)=[0,3]; 0 = i \le j = 1$.

\item $\tau \sim^{\pm}_{0} \sigma: \hat{d}_1(\tau) = [1,3] \supseteq [3] \subseteq \hat{d}_1(\sigma)=[0,3]; i = j = 1$.

\item $\tau \sim^{+}_{0} \sigma: \hat{d}_1(\tau)=[1,3] \supseteq [1] \subseteq \hat{d}_2(\sigma)=[0,1]; 1= i \le j = 2$.

\item $\tau \sim^{\pm}_{0} \sigma: \hat{d}_2(\tau)=[1,2] \supseteq [1] \subseteq \hat{d}_2(\sigma)=[0,1]; i = j = 2$.

\item $\sigma \sim^{+}_{1} \tau: \hat{d}_0(\sigma) = [1,3] = \hat{d}_1(\tau); 0 = i \le j = 1$.

\end{itemize}

Thus the conditions $\sigma \sim^{+}_{0} \tau$ and $\sigma \sim^{+}_{1} \tau$  hold. However, strict lower $(\bullet)$-$0$-adjacencies also require that \emph{neither} simplex is $(+)$-$1$-near \emph{nor} $(-)$-$1$-near; thus, the strict-lower condition fails for $q=0$, and consequently the maximal $0$-adjacency matrix is
$$
\mathcal{H}_0 = \mathbf{0} \quad (\text{trivial 0-digraph}).
$$

Whenever two maximal $k$-simplices share a $(k-1)$-face, they are mutually $(k-1)$-adjacent in the face-map sense, which blocks all strictly lower $q$-adjacency for $q <k-1$.  Here $k=2$ and the shared face has dimension 1, so $q = 0$ is blocked.  The first non-trivial level is $q= k-1 =1$.

%-------------
\subsubsection{Level $q = 1$}

As we showed previously, the condition $\sigma \sim^{+}_{1} \tau$  holds. In addition, $\sigma$ and $\tau$ share no common $2$-face, since $\sigma \cap \tau =  \{1,3\}$.  Therefore:
$$
\sigma \sim^{+}_{A_{1^{*}}} \tau.
$$

We conclude that $\mathcal{G}_1$ is a single arc $\sigma \to \tau$, and then its $1$-adjacency matrix is
$$
\mathcal{H}_{0} =  
\bbordermatrix{
& \sigma & \tau\cr
\sigma   & 0  & 1\cr 
\tau   & 0  & 0\cr 
}.
$$

%-------------
\subsubsection{Level $q \ge 2$}

Both $\sigma$ and $\tau$ have dimension $2$, so they belong to $\mathrm{dFl}^*_2(G_4)$.  For $(+)$-$2$-adjacency we would need
$|\hat{d}_i(\sigma)\cap\hat{d}_j(\tau)|\ge3$, but every face map of a $2$-simplex produces a $1$-simplex, so the maximum possible intersection is $2$.  Hence $\mathcal{H}_q = \mathbf{0}$, for all $q \ge 2$.

%-------------
\subsubsection{Simplicial Measures for $\mathcal{G}_{1}$}

All non-trivial structure is concentrated at $q=1$: the only meaningful $q$-digraph is $\mathcal{G}_1$, which is a single arc. Tables~\ref{tab:sup15} and \ref{tab:sup16} show the results of some simplicial measures for $\mathcal{G}_{1}$; Table~\ref{tab:sup17} shows the results for $\mathcal{G}_{q}$, $q=0,1,2$.

\begin{table}[h!]
\centering
\caption{Selected local simplicial measures for $\mathcal{G}_{1}$.}
\label{tab:sup15}
\begin{tabular}{l c c }
\toprule
\textbf{Measure} & $\sigma=[0,1,3]$ & $\tau=[1,2,3]$\\
\midrule

In-$1$-degree centrality $C^-_{\mathrm{deg}_1}$  & $0.00$ & $1.00$\\

Out-$1$-degree centrality $C^+_{\mathrm{deg}_1}$  & $1.00$ & $0.00$\\

$1$-Harmonic centrality $\vec{HC}_1$  & $1.00$ & $0.00$\\

$1$-Closeness $\vec{Cl}_1$  & $1.00$ & $0.00$\\

$1$-Betweenness $\vec{B}_1$  & $0.00$ & $0.00$\\

$1$-Reaching centrality $C_{R,1}$  & $1.00$ & $0.00$\\

$1$-Communicability $\mathrm{CM}_1(\cdot,\sigma)$  & $1.00$ & $0.00$\\

$1$-Communicability $\mathrm{CM}_1(\cdot,\tau)$  & $1.00$ & $1.00$\\
\bottomrule
\end{tabular}
\end{table}

\begin{table}[h!]
\centering
\caption{Selected global simplicial measures for $\mathcal{G}_{1}$.}
\label{tab:sup16}
\begin{tabular}{lcl}
\toprule
\textbf{Global measure} & \textbf{Value} \\
\midrule

Avg. shortest $1$-walk length $\vec{\bar{L}}_1$ & $0.50$ \\

Global $1$-efficiency $\vec{E}^1_\mathrm{glob}$ & $0.50$ \\

Diameter $\mathrm{diam}_1$ & $1$\\

Radius $\mathrm{rad}_1$ & $1$  \\

$1$-Returnability $K_{r,1}$ & $0.00$ \\

$1$-Energy $\varepsilon_1$ & $1.00$ \\

Singular values & $\{1,\,0\}$ \\

Eigenvalues & $\{0,\,0\}$ \\

Global $1$-reaching centrality $\mathrm{GRC}_1$ & $1.00$ \\

\bottomrule
\end{tabular}
\end{table}

\begin{table}[h!]
\centering
\caption{Simplicial measure summary across all $q$ levels.}
\label{tab:sup17}
\begin{tabular}{l ccc}
\toprule
\textbf{Measure} & $q=0$ & $q=1$ & $q=2$\\
\midrule
Vertex set cardinality $|\mathcal{V}_q|$ & 2 & 2 & 2\\

Arc set cardinality $|\mathcal{E}_q|$ & 0 & 1 & 0\\

Avg. shortest $q$-walk length $\bar{L}_q$ & --- & 0.50 & ---\\

Global $q$-efficiency $\vec{E}^q_\mathrm{glob}$ & --- & 0.50 & ---\\

$q$-Returnability $K_{r,q}$ & 0 & 0.00 & 0\\

$q$-Energy $\varepsilon_q$ & 0 & 1.00 & 0\\

Global $q$-reaching centrality $\mathrm{GRC}_q$ & --- & 1.00 & ---\\

\bottomrule
\end{tabular}
\end{table}

%---------------------------------------------------------
%---------------------------------------------------------
\section{Simulation Studies: Erdös-Rényi Random Digraphs}
\label{sec:simulations}

%-----------
\subsection{Methodology}

We generated small random Erdös--Rényi digraphs $G(n,p)$ with $n = 10$ nodes and arc probability $p \in \{ 0.10, 0.20, 0.30, 0.40, 0.50, 0.60, 0.70, 0.80 \}$. We performed $30$ trials per probability $p$. For each digraph, we enumerated directed cliques up to dimension $d_{\max}$ (4 for $p\le0.4$, and 3 for $p \ge 0.5$) and extracted the maximal directed simplices, and we built the maximal $q$-digraphs $\mathcal{G}_q=(\mathcal{V}_q,\mathcal{E}_q)$ for $q=0,1,2$ (truncated to $60$ nodes if $|\mathcal{V}_q| > 60$). Finally, we computed the following simplicial measures:

\begin{itemize}[noitemsep]
\item Mean vertex set cardinality ($|\mathcal{V}_q|$);
\item Mean arc set cardinality ($|\mathcal{E}_q|$);
\item Mean global $q$-efficiency ($E^{q}_{\mathrm{glob}}$);
\item Mean $q$-energy ($\varepsilon_q$);
\item Mean global $q$-reaching centrality ($\mathrm{GRC}_q$);
\item Mean maximum $q$-harmonic centrality ($\widehat{HC}_q$).
\end{itemize}

%-----------
\subsection{Simulation Results}

All tables below (Tables~\ref{tab:q0-dist}, \ref{tab:q1}, and \ref{tab:q2}) report the mean value of the measures with their respective standard deviations in parentheses.

%--------
%\subsubsection{Results for $q = 0$}

\begin{table}[h!]
\centering
\small
\caption{Mean and standard deviation (parentheses) for the selected simplicial measures at $q=0$.}
\label{tab:q0-dist}
\setlength\tabcolsep{4pt}
\begin{tabular}{l c c c c c c c c}
\toprule
& \multicolumn{8}{c}{\textbf{Probability $p$}}\\
\cmidrule{2-9}
\textbf{Measure} & \textbf{0.10} & \textbf{0.20} & \textbf{0.30} & \textbf{0.40}
& \textbf{0.50} & \textbf{0.60} & \textbf{0.70} & \textbf{0.80}\\
\midrule

$|\mathcal{V}_0|$ &
\musd{9.3}{1.5} & \musd{12.5}{1.8} & \musd{15.8}{3.2} & \musd{21.9}{8.3} &
\musd{51.9}{11.9} & \musd{60.0}{0.0} & \musd{60.0}{0.0} & \musd{60.0}{0.0}\\

$|\mathcal{E}_0|$ &
\musd{15.8}{9.2} & \musd{46.4}{12.8} & \musd{92.0}{39.7} & \musd{166}{102} &
\musd{672}{260} & \musd{684}{249} & \musd{538}{212} & \musd{379}{170}\\

$E^0_{\mathrm{glob}}$ &
\musd{0.306}{0.136} & \musd{0.600}{0.064} & \musd{0.675}{0.037} & \musd{0.655}{0.043} &
\musd{0.604}{0.048} & \musd{0.531}{0.119} & \musd{0.444}{0.179} & \musd{0.349}{0.157}\\

$\varepsilon_0$ &
\musd{8.96}{4.28} & \musd{18.82}{3.80} & \musd{26.27}{6.51} & \musd{34.41}{12.28} &
\musd{89.66}{26.38} & \musd{96.26}{23.68} & \musd{86.46}{18.62} & \musd{73.79}{19.44}\\

$\mathrm{GRC}_0$ &
\musd{0.249}{0.124} & \musd{0.042}{0.076} & \musd{0.000}{0.000} & \musd{0.000}{0.000} &
\musd{0.004}{0.014} & \musd{0.018}{0.045} & \musd{0.020}{0.043} & \musd{0.072}{0.069}\\

$\widehat{HC}_0$ &
\musd{4.39}{1.81} & \musd{8.36}{1.31} & \musd{11.74}{2.52} & \musd{16.64}{6.67} &
\musd{39.43}{9.73} & \musd{40.90}{7.38} & \musd{34.28}{13.00} & \musd{29.42}{11.15}\\

\bottomrule
\end{tabular}
\end{table}

%--------
%\subsubsection{Results for $q = 1$}

\begin{table}[h!]
\centering
\small
\caption{Mean and standard deviation (parentheses) for the selected simplicial measures at $q = 1$.}
\label{tab:q1}
\setlength\tabcolsep{4pt}
\begin{tabular}{l c c c c c c c c}
\toprule
& \multicolumn{8}{c}{\textbf{Probability $p$}}\\
\cmidrule{2-9}
\textbf{Measure} & \textbf{0.10} & \textbf{0.20} & \textbf{0.30} & \textbf{0.40}
& \textbf{0.50} & \textbf{0.60} & \textbf{0.70} & \textbf{0.80}\\
\midrule

$|\mathcal{V}_1|$ &
\musd{7.7}{2.3} & \musd{12.3}{1.9} & \musd{15.8}{3.2} & \musd{21.9}{8.3} &
\musd{51.9}{11.9} & \musd{60.0}{0.0} & \musd{60.0}{0.0} & \musd{60.0}{0.0}\\

$|\mathcal{E}_1|$ &
\musd{0.20}{0.54} & \musd{8.47}{9.81} & \musd{40.83}{30.04} & \musd{127}{110} &
\musd{1021}{452} & \musd{1419}{193} & \musd{1514}{198} & \musd{1646}{148}\\

$E^1_{\mathrm{glob}}$ &
\musd{0.009}{0.031} & \musd{0.086}{0.099} & \musd{0.297}{0.154} & \musd{0.505}{0.100} &
\musd{0.665}{0.045} & \musd{0.698}{0.029} & \musd{0.711}{0.030} & \musd{0.731}{0.023}\\

$\varepsilon_1$ &
\musd{0.20}{0.54} & \musd{4.68}{4.53} & \musd{16.57}{8.41} & \musd{31.74}{14.67} &
\musd{115.08}{38.19} & \musd{144.19}{19.88} & \musd{147.54}{19.62} & \musd{157.36}{18.03}\\

$\mathrm{GRC}_1$ &
\musd{0.030}{0.097} & \musd{0.178}{0.124} & \musd{0.236}{0.105} & \musd{0.078}{0.097} &
\musd{0.004}{0.013} & \musd{0.000}{0.000} & \musd{0.000}{0.000} & \musd{0.000}{0.000}\\

$\widehat{HC}_1$ &
\musd{0.13}{0.34} & \musd{2.50}{2.12} & \musd{7.38}{3.21} & \musd{14.49}{6.93} &
\musd{39.63}{10.11} & \musd{46.91}{1.55} & \musd{46.52}{1.09} & \musd{46.96}{1.42}\\

\bottomrule
\end{tabular}
\end{table}

%--------
%\subsubsection{Results for $q = 2$}

\begin{table}[h!]
\centering
\small
\caption{Mean and standard deviation (parentheses) for the selected simplicial measures at $q = 2$.}
\label{tab:q2}
\setlength\tabcolsep{4pt}
\begin{tabular}{l c c c c c c c c}
\toprule
& \multicolumn{8}{c}{\textbf{Probability $p$}}\\
\cmidrule{2-9}
\textbf{Measure} & \textbf{0.10} & \textbf{0.20} & \textbf{0.30} & \textbf{0.40}
& \textbf{0.50} & \textbf{0.60} & \textbf{0.70} & \textbf{0.80}\\
\midrule

$|\mathcal{V}_2|$ &
\musd{0.6}{0.7} & \musd{4.7}{3.2} & \musd{12.3}{4.6} & \musd{21.0}{8.5} &
\musd{51.8}{12.0} & \musd{60.0}{0.0} & \musd{60.0}{0.0} & \musd{60.0}{0.0}\\

$|\mathcal{E}_2|$ &
\musd{0.0}{0.0} & \musd{0.1}{0.5} & \musd{5.8}{12.2} & \musd{74.8}{131} &
\musd{606}{294} & \musd{871}{200} & \musd{976}{175} & \musd{1071}{164}\\

$E^2_{\mathrm{glob}}$ &
\musd{0.000}{0.000} & \musd{0.005}{0.020} & \musd{0.036}{0.064} & \musd{0.173}{0.130} &
\musd{0.462}{0.087} & \musd{0.542}{0.073} & \musd{0.590}{0.049} & \musd{0.626}{0.038}\\

$\varepsilon_2$ &
\musd{0.0}{0.0} & \musd{0.13}{0.50} & \musd{2.77}{4.36} & \musd{17.90}{16.02} &
\musd{110.76}{38.27} & \musd{147.72}{14.98} & \musd{162.30}{11.72} & \musd{172.91}{7.32}\\

$\mathrm{GRC}_2$ &
\musd{0.000}{0.000} & \musd{0.011}{0.041} & \musd{0.094}{0.135} & \musd{0.240}{0.087} &
\musd{0.074}{0.081} & \musd{0.027}{0.046} & \musd{0.006}{0.021} & \musd{0.005}{0.028}\\

$\widehat{HC}_2$ &
\musd{0.00}{0.00} & \musd{0.07}{0.25} & \musd{1.44}{2.17} & \musd{8.11}{6.79} & \musd{32.82}{9.78} & \musd{40.83}{3.62} & \musd{41.83}{2.51} & \musd{43.92}{2.65}\\

\bottomrule
\end{tabular}
\end{table}

\begin{figure}[h!]
\centering
\includegraphics[scale=1.5]{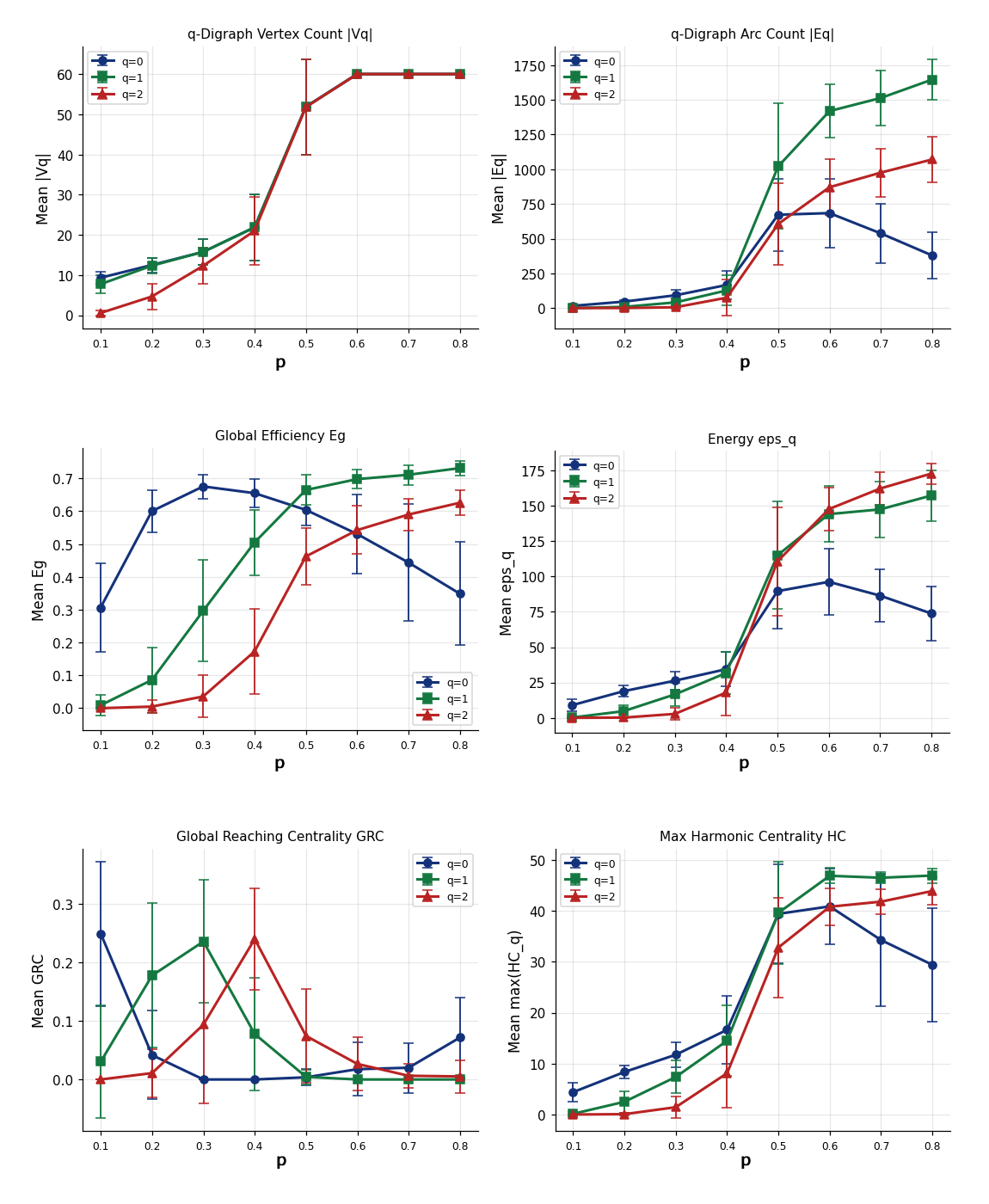}
\caption{Simulation results ($n=10$, 30 trials). Mean values and standard deviations.}
\label{fig:sim-plots}
\end{figure}

\subsection{Discussion}

\noindent \textbf{Number of $q$-ars (non-monotone at level $q = 0$)}: At level $q = 0$, the number of arcs $|\mathcal{E}_0|$ increases from $p = 0.1$ to $p = 0.5$ (peaking at $\approx 672$), then decreases for $p \ge 0.6$ ($\approx379$ at $p=0.8$).  This non-monotone behaviour is invisible to usual graph analysis, which sees a monotonically increasing number of arcs in the original digraph.  This is because at high arc density, a small number of large maximal cliques form, and these cliques have fewer strict lower-adjacency pairs between them (see Figure~\ref{fig:sim-plots}).

\medskip

\noindent \textbf{Global $q$-efficiency (non-monotone at level $q = 0$)}: At level $q = 0$, $E^0_{\mathrm{glob}}$ peaks at $p\approx0.3$ ($\approx0.68$) then decreases for higher values of $p$, because the $0$-digraph first becomes well connected then fragments into weakly disconnected components as the clique structure collapses into a few large isolated maximal simplices. On the other hand, $E^1_{\mathrm{glob}}$ and $E^2_{\mathrm{glob}}$ are monotonically increasing (see Figure~\ref{fig:sim-plots}).

\medskip

\noindent \textbf{Global $q$-reaching centrality}: At level $q = 0$, $\mathrm{GRC}_0$ peaks at $p=0.1$ ($\approx0.25$), then drops to zero at $p = 0.3$, and slight recovery at $p \ge 0.5$. $\mathrm{GRC}_1$ peaks at $p=0.3$ ($\approx0.24$), and $\mathrm{GRC}_2$ peaks at $p=0.4$ ($\approx0.24$). These distinct peaks might reflect the fact that each higher $q$ level requires a higher number of arcs in the original digraph before the $q$-digraph develops a non-trivial directed higher-order structure (see Figure~\ref{fig:sim-plots}).

\end{document}